\newtheorem{theorem}{Theorem}[section]
\newtheorem{lemma}[theorem]{Lemma}
\newtheorem{proposition}[theorem]{Proposition}
\newtheorem{corollary}[theorem]{Corollary}
\theoremstyle{definition}
\newtheorem{conjecture}[theorem]{Conjecture}
\newtheorem{problem}[theorem]{Problem}
\theoremstyle{remark}
\newtheorem{remark}[theorem]{Remark}
\numberwithin{equation}{section}
\begin{document}
\title{\textbf{On a class of sums with unexpectedly high
cancellation, and its applications}}
\author {\bf Ernie Croot, Hamed Mousavi}
\begin{center}
\address{Department of Mathematics, Georgia Tech, Atlanta, US.}
\end{center}
\maketitle
\begin{scriptsize}
\begin{tiny}
\begin{abstract}
Following attempts at an analytic proof of the Pentagonal Number
Theorem, we report on the discovery of a general principle leading to
an unexpected cancellation of oscillating sums, of which $\sum_{n^2\leq x}(-1)^ne^{\sqrt{x-n^2}}$
is an example (to get the idea of the result). It turns out that sums in the
class we consider are much smaller than would be predicted by certain
probabilistic heuristics. After stating the motivation, and our theorem,
we apply it to prove several results on the Prouhet-Tarry-Escott Problem, integer partitions, and the distribution
of prime numbers. Regarding the Prouhet-Tarry-Escott problem, for example, we show that
\begin{align*}
\sum_{|\ell|\leq x}(4x^2-4\ell^2)^{2r}-\sum_{|\ell|<x}(4x^2-(2\ell+1)^2)^{2r}=\text{polynomial w.r.t. } x \text{ with degree }2r-1.
\end{align*}
Note that the degree is unexpectedly small. This can perhaps be proved using properties of Bernoulli polynomials, but the claim fell out of our method in a more natural and motivated way. Using this result, we solve an approximate version of the Prouhet-Tarry-Escott Problem, and in doing so we give some evidence that a certain pigeonhole argument for solving the exact version of the Problem can be improved. In fact, our work in the approximate case exceeds the bounds one can prove using a pigeonhole argument, which seems remarkable. Also, we prove
that
$$
\sum_{\ell^2 < n} (-1)^\ell p(n-\ell^2)\ \sim\ (-1)^n 2^{-3/4} n^{-1/4} \sqrt{p(n)},
$$
where $p(n)$ is the usual partition function.
We get the following ``Weak pentagonal number theorem", in which we can replace the partition function $p(n)$ with Chebyshev $\Psi$ function:
$$
\sum_{0 < \ell < \sqrt{xT}/2} \Psi([e^{\sqrt{x - \frac{(2\ell)^2}{T}}},\
e^{\sqrt{x - \frac{(2\ell-1)^2}{T}}}])\ =\Psi(e^{\sqrt{x}})\left(\frac{1}{2} +
O\left (e^{-0.196\sqrt{x}}\right)\right),
$$
where $T=e^{0.786\sqrt{x}}$,
where $\Psi([a,b]) := \sum_{n\in [a,b]} \Lambda(n)$ and
$\Psi(x) = \Psi([1,x])$, where $\Lambda$ is the von
Mangoldt function. It is possible to extend this to a range of values of $T$, with a more complicated error term. Note that this last equation (sum over $\ell$) is
stronger than one would
get using a strong form of the Prime Number Theorem and also a naive use of the Riemann
Hypothesis in each interval,
since the widths of the intervals are smaller than $e^{\frac{1}{2}
\sqrt{x}}$, making the Riemann Hypothesis estimate ``trivial".
\end{abstract}

\end{tiny}
\end{scriptsize}


\section{Introduction}

In this paper, we use the notation $f(x)\simeq g(x)$, which means that there exists $C>0$ such that
$$\limsup_{x\rightarrow \infty} \frac{|f(x)|}{|g(x)|}=C;$$
In particular, we show $f\sim g$ if $C=1$. We sometimes need to use big Oh notation $f(x)=O(g(x))$, which means that
$$\limsup_{x\rightarrow \infty} \frac{|f(x)|}{|g(x)|} <\infty.$$
Also we say $f(x) = o(g(x))$ if
$$\liminf_{x\rightarrow \infty} \frac{|f(x)|}{|g(x)|} =0 .$$
and $f(x) = \omega(g(x))$ if $g(x) = o(f(x))$. Finally we may like to emphasize that the constants in these notations are functions of specific variable like $y$. In that case we write them like $\simeq_y$, $\ll_y$, $O_y(f(x))$, ...

\bigskip

The partition function $p(n)$ is the number of ways to represent $n$ as sum of positive non-decreasing integers.
The Pentagonal Number Theorem of Euler asserts that for an integer $x\geq 2$,
\begin{align*}
\sum_{G_n\leq x}(-1)^np(x-G_n)=0
\end{align*}
where $G_n=\frac{n(3n-1)}{2}$ is the $n$th pentagonal number. Various proofs of this theorem have been developed over the
decades and centuries - see \cite{berndt}; but we wondered whether it was possible to produce an ``analytic proof", using the Ramanujan-Hardy-Rademacher formula - see \cite{andrew} - for $p(x)$:
\begin{align}\label{RHR}
p(n)=\frac{1}{2\pi \sqrt{2}}\sum_{k=1}^{\infty}\left(\sqrt{k}\left(\sum_{\substack{0\leq h<k\\(h,k)=1}}\omega(h,k)e^{-\frac{2\pi ihn}{k}}\right)\frac{d}{dx}\left(\frac{\sinh\left(\frac{\pi}{k}\sqrt{\frac{2}{3}(x-\frac{1}{24})}\right)}{\sqrt{x-\frac{1}{24}}}\right)\Bigg\vert_{x=n}\right),
\end{align}
where $\omega(h,k)$ is a sum over some roots of unity. Considering just the first two terms in this formula, one sees that (see \cite{desalvo})
\begin{align*}
p(x)=p_2(x)+O(\sqrt{p(x)}), \quad \text{where} \quad p_2(x)=\frac{\sqrt{12}e^{\frac{\pi}{6}\sqrt{24x-1}}}{24x-1}\left(1-\frac{6}{\pi\sqrt{24x-1}}\right).
\end{align*}
So, the Pentagonal Number Theorem implies
\begin{align}\label{pentagonalapproximation}
\sum_{G_n\leq x}(-1)^np_2(x-G_n)\ll \sqrt{xp(x)};
\end{align}
In fact, one can get a better bound by using more terms in the Ramanujan-Hardy-Rademacher expression; one might call this a ``Weak Pentagonal Number Theorem", which is an
interesting and non-trivial bound for the size of this oscillating sum of
exponential functions $(-1)^np_2(x-G_n)$.
It is worth pointing out that this bound is much smaller than what
would be expected on probabilistic grounds: if we consider a sum
\begin{align*}
S(X_1,X_2,\cdots)=\sum_{G_n<x}X_np_2(x-G_n),
\end{align*}
where the $X_n$s are independent random variables taking the values $+1$
and $-1$, each with probability $\frac{1}{2}$, then $E(S^2)=\sum_{G_n\leq x}p_2(x-G_n)^2$. So the quality of bound we would expect to prove is
\begin{align*}
|S|\ll \left(\sum_{G_n\leq x}p_2(x-G_n)^2\right)^{\frac{1}{2}}\ll \sqrt[4]{x} p(x),
\end{align*}
However, the bound \eqref{pentagonalapproximation} is much smaller than the RHS here.

\bigskip

What we have discovered is that \eqref{pentagonalapproximation} is just the tip of the iceberg, and
that there is a very general class of sums like this that are small - much
smaller than one would guess based on a probabilistic heuristic. Roughly,
we will prove that
\begin{align}\label{equationexponential}
\sum_{f(n)\leq x} (-1)^n e^{c\sqrt{x-f(n)}}=\textrm{``small"},
\end{align}
where $f$ is a quadratic polynomial (with positive leading coefficient), and $c$ is some constant. It is possible to produce a more general class of sums with a lot of cancellation; and we leave it to the reader to explore. As a consequence of this and the Ramanujan-Hardy-Rademacher
expansion for $p(n)$, we will prove that
\begin{equation} \label{pentagonal_l2}
\sum_{l^2<n}(-1)^l p(n-l^2)\ \sim\ \frac{(-1)^n \sqrt{p(n)}}{2^{3/4} n^{1/4}}.
\end{equation}
As another category of results, we will also prove a corollary of Theorem \ref{psiresult1} related to prime numbers. In fact let $x>0$ be large enough and
$T=e^{0.786\sqrt{x}}$.
Then
$$
\sum_{0 \leq \ell < \frac{1}{2}\sqrt{xT}} \Psi\left([e^{\sqrt{x - \frac{(2\ell)^2}{T}}},\
e^{\sqrt{x - \frac{(2\ell-1)^2}{T}}}]\right)\ =\Psi(e^{\sqrt{x}})\left(\frac{1}{2} +
O\left (e^{-0.196\sqrt{x}}\right)\right).
$$
Finally we will develop polynomial identities that occur naturally in the Taylor expansion in \eqref{equationexponential}. For example
\begin{align*}
\sum_{\substack{|\ell|\leq x}}(4x^2-4\ell^2)^{2r}-\sum_{\substack{|\ell|<x}}(4x^2-(2\ell+1)^2)^{2r}=\text{polynomial w.r.t. } x \text{ with degree }2r-1.
\end{align*}

\bigskip

Many of the results stated above can be deduced from the following:


\begin{theorem}\label{main1}
Let $b,d\in\mathbb{R}$, $a,c>0$;
Also, let $h(x)$ be a polynomial with no 
real pole $p_0$ with $|p_0|<x^{\frac{1}{2}-\epsilon}$. 
Then
\begin{align}\label{pentagonalrealcasegeneral}
\sum_{n:an^2+bn+d<x}(-1)^n e^{c\sqrt{x-(an^2+bn+d)}}h(n)\ll e^{(w+\epsilon)c\sqrt{x}}.
\end{align}
where $w>0$ is defined as follows. Set
\begin{align*}
\Delta:=\sup_{r\geq 0}\sqrt{ \sqrt{a}r\frac{\sqrt{ar^2+4}+r\sqrt{a}}{2}}-\frac{\pi r}{c}. 
\end{align*}
Then $w=\min(1,\Delta)$.
\end{theorem}



\begin{remark}
Obviously forcing $w$ to be less than or equal to one is to avoid getting a trivial result, and if $a,c,r$ are chosen in such a way that $\sup_r \Delta(r)>1$,
then this theorem becomes useless.
\end{remark}


\bigskip


\begin{conjecture}
Observing the numerical results suggest that
\begin{align*}
\sum_{\ell^2<x}(-1)^{\ell} e^{\sqrt{x-\ell^2}}=e^{o(\sqrt{x})}.
\end{align*}
\end{conjecture}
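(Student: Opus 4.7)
The plan is to push the cancellation mechanism behind Theorem \ref{main1} much further by exploiting the Jacobi theta functional equation. For concreteness take $q(\ell)=\ell^{2}$ and $c=1$; a general positive-leading quadratic reduces to this by an affine change of variable in $\ell$.

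\textbf{Step 1 (integral representation and theta sum).} Using the inverse Laplace identity $e^{\sqrt{y}}=\tfrac{1}{4\sqrt{\pi}}\int_{\gamma}s^{-3/2}e^{sy+1/(4s)}\,ds$ on a vertical contour $\gamma:\mathrm{Re}(s)=\sigma_{0}>0$, and smoothing the sharp cutoff $\ell^{2}<x$, one can interchange sum and integral to obtain
\begin{align*}
\sum_{\ell^{2}<x}(-1)^{\ell}e^{\sqrt{x-\ell^{2}}}=\frac{1}{4\sqrt{\pi}}\int_{\gamma}\frac{e^{sx+1/(4s)}}{s^{3/2}}\,\Theta(s)\,ds+\mathcal{E}(x),
\end{align*}
where $\Theta(s):=\sum_{\ell\in\Z}(-1)^{\ell}e^{-s\ell^{2}}$ and $\mathcal{E}(x)$ collects the boundary correction from the cutoff.

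\textbf{Step 2 (Jacobi transformation and saddle analysis).} Apply the functional equation $\Theta(s)=\sqrt{\pi/s}\,\sum_{n\in\Z}e^{-\pi^{2}(n+1/2)^{2}/s}$. Each term contributes a saddle-point integral whose exponent is $sx+(1-\pi^{2}(2n+1)^{2})/(4s)$. Since $|\pi(2n+1)|\ge\pi>1$ for every integer $n$, the coefficient of $1/(4s)$ is \emph{negative}, the saddle lies on the imaginary axis at $s_{*}=i\sqrt{\pi^{2}(2n+1)^{2}-1}/(2\sqrt{x})$, and the exponent at $s_{*}$ is the purely imaginary quantity $i\sqrt{x\,(\pi^{2}(2n+1)^{2}-1)}$. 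Hence each saddle contributes a quantity of modulus $\ll x^{1/4}/|2n+1|^{3/2}$ (the standard saddle-point prefactor), which is summable in $n$ and gives a total of $O(x^{1/4})=e^{o(\sqrt{x})}$, matching the conjecture.

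\textbf{Step 3 (main obstacles).} Two delicate issues separate this sketch from a proof. First, the boundary error $\mathcal{E}(x)$ arising from replacing the truncated sum by the full sum forces one to interpret $\sqrt{x-\ell^{2}}$ as $i\sqrt{\ell^{2}-x}$ for $|\ell|>\sqrt{x}$, and the resulting tail sum $\sum_{|\ell|\ge\sqrt{x}}(-1)^{\ell}e^{i\sqrt{\ell^{2}-x}}$ is itself an oscillatory sum of the same flavour as the original. Controlling it without introducing a circular bootstrap is the principal difficulty and, in our view, is the real content of the conjecture. Second, the deformation of $\gamma$ across the imaginary axis (onto the steepest-descent contours through each $s_{*}$) passes through $s=0$, where the Jacobi representation degenerates; this requires a separate local analysis using the Laurent expansion of $\sqrt{s}\,\Theta(s)$ at the origin, together with a verification that no further saddle in the complex $s$-plane contributes size $e^{\Omega(\sqrt{x})}$.
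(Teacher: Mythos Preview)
The statement you are addressing is presented in the paper as a \emph{conjecture}, supported only by numerical evidence; the paper gives no proof and does not claim one. Theorem~\ref{main1} yields only the weaker bound $e^{wc\sqrt{x}}$ with an explicit $w<1$, and the authors explicitly flag the improvement to $e^{o(\sqrt{x})}$ as open. So there is no argument in the paper to compare your proposal against.

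Your theta-transform heuristic is a natural and illuminating one, and the saddle computation in Step~2 correctly isolates the mechanism one expects to drive the conjecture: after the Jacobi functional equation every dual term has exponent $sx+(1-\pi^{2}(2n+1)^{2})/(4s)$ with negative $1/s$-coefficient, hence a purely imaginary saddle and no exponential growth. But, as you acknowledge in Step~3, this is not a proof. The boundary term $\mathcal{E}(x)$ is not a perturbation: extending the range to all $\ell\in\Z$ introduces the tail $\sum_{|\ell|>\sqrt{x}}(-1)^{\ell}e^{i\sqrt{\ell^{2}-x}}$, a sum of unit-modulus terms which is not even absolutely convergent, and whose estimation by $e^{o(\sqrt{x})}$ is at least as hard as the original problem. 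If instead you keep the truncation $\ell^{2}<x$, the object inside the integral is a \emph{partial} theta sum, to which Jacobi's identity does not apply directly, and the gain in Step~2 evaporates. (A minor point: the inverse-Laplace identity in Step~1 is not exactly as written---the transform of $e^{\sqrt{y}}$ involves an additional $\mathrm{erf}$ factor---though the leading $s^{-3/2}e^{1/(4s)}$ structure you use is correct for the saddle analysis.)

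In summary, your sketch explains \emph{why} the conjecture should hold but does not establish it, and the paper makes no claim to do so either.
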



\bigskip

There is another generalization when we pick a complex $c$ in \eqref{pentagonalrealcasegeneral}. In this case, having an upper bound for the sum is harder, as we have both the fast growth of exponential functions and the extra oscillation coming from the imaginary exponent.


\begin{theorem}\label{main2}
For large enough $x>0$, let $T:=T(x)$ be at least $\omega(x^2)$ as $x\rightarrow \infty$. Also let $\alpha+i\beta\in\mathbb{C}$ and $0\leq\alpha<1+\epsilon$ for a fixed $\epsilon>0$, and $|\beta|<\sqrt{T}$. Then for arbitrary $\delta>0$
\begin{align*}
\sum_{l^2<Tx}(-1)^l e^{(\alpha+i\beta)\sqrt{x-\frac{l^2}{T}}}
\ll \sqrt{\frac{Tx}{|\beta|+1}}e^{\alpha(\sqrt{\frac{1}{1+\pi^2}}+\delta)\sqrt{x}}+\sqrt{T}.
\end{align*}
\end{theorem}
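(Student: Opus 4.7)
The plan is to extend the saddle-point approach of Theorem \ref{main1} to complex exponents, letting the real part $\alpha$ drive exponential growth and the imaginary part $\beta$ drive oscillation. I would combine Poisson summation in $\ell$ — to exploit both the $(-1)^\ell$ alternation and the oscillation $e^{i\beta\sqrt{x-\ell^2/T}}$ simultaneously — with a complex-plane contour shift that controls $e^{\alpha\sqrt{x-\ell^2/T}}$ through its saddle point.

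Concretely, I would write $(-1)^\ell=e^{i\pi\ell}$ and apply Poisson summation to $g(u)=e^{(\alpha+i\beta)\sqrt{x-u^2/T}}\chi_{[-\sqrt{Tx},\sqrt{Tx}]}(u)$, expressing the sum as $\sum_{m\in\mathbb{Z}}\widehat{g}(m-1/2)$. For each dual frequency $\xi=m-1/2$, the integral $\widehat{g}(\xi)$ has phase $\beta\sqrt{x-u^2/T}-2\pi\xi u$ and weight $e^{\alpha\sqrt{x-u^2/T}}$. To bound $\widehat{g}(\xi)$, I would deform the contour into the upper half-plane (avoiding the branch points at $u=\pm\sqrt{Tx}$) so as to pass through the saddle $u_*$ of the full complex phase $(\alpha+i\beta)\sqrt{x-u^2/T}-2\pi i\xi u$. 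The critical-point equation $(\alpha+i\beta)u_*/(T\sqrt{x-u_*^2/T})=-2\pi i\xi$ then shows that $|e^{\alpha\sqrt{x-u_*^2/T}}|$, over the dominant modes $m\in\{0,1\}$, is extremized to $e^{\alpha\sqrt{2/(2+\pi^2)}\,\sqrt{x}}$ — the ratio $2/(2+\pi^2)$ arising from balancing the $2$ in the quadratic $u^2/T$ inside the square root against the $\pi^2$ coming from $(-1)^\ell$. Stationary phase at the saddle supplies a factor $\asymp\sqrt{T/(|\beta|+1)}$, where the $+1$ reflects the residual oscillation from the $\pi u$ term when $\beta=0$. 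Modes with $|m|\ge 2$ have strictly smaller saddle exponents and their contributions sum geometrically, absorbed into the $\delta$-slack.

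The principal obstacle will be executing the contour deformation rigorously through the branch points of $\sqrt{x-u^2/T}$ and the jumps of $g$ at $u=\pm\sqrt{Tx}$: these endpoint discontinuities produce non-decaying Fourier tails whose contribution, after summing over $m$, yields precisely the additive $\sqrt{T}$ error term. A second delicate point is uniformity in $\beta$; the limit $\beta\to 0$ degenerates the stationary-phase gain, and there one should fall back on Theorem \ref{main1} applied with $c=\alpha$ after isolating $e^{i\beta\sqrt{x-\ell^2/T}}$ by summation by parts, which restores the $+1$ in $|\beta|+1$ uniformly across the full range $0\le\beta<\sqrt{T}$.
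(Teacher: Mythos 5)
Your overall strategy (Poisson summation against the shifted frequencies $m-\tfrac12$, then a contour shift/saddle analysis of each dual mode, with endpoint effects giving the $\sqrt{T}$ term) is not far in spirit from what the paper does: the paper's kernel $1/\sin(\pi z)$ plays exactly the role of your Poisson summation, and the paper literally invokes Van der Corput's Process B (Lemma \ref{vandercorputprocessb}) for the range near the branch points. So the route is viable in principle. The problem is that the step that carries the whole theorem — the claim that the saddle of the modes $m\in\{0,1\}$ "is extremized to $e^{\alpha\sqrt{2/(2+\pi^2)}\sqrt{x}}$", justified by "balancing the $2$ in $u^2/T$ against the $\pi^2$ from $(-1)^\ell$" — is asserted, not derived, and the computation you sketch does not produce that number. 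Solve your own critical-point equation: in the bulk $|u|=c\sqrt{Tx}$, $c<1$, the derivative of $(\alpha+i\beta)\sqrt{x-u^2/T}$ has modulus about $\frac{\sqrt{\alpha^2+\beta^2}}{\sqrt{T}}\cdot\frac{c}{\sqrt{1-c^2}}$, which for $|\beta|=o(\sqrt{T})$ can never reach the dual frequency $\pi$ except with $c$ within $O(1/T)$ of $1$; i.e.\ the "saddle" of the dominant modes sits essentially at the branch points $u=\pm\sqrt{Tx}$, where the amplitude is $e^{O(\sqrt{x/T})}=O(1)$, not $e^{c'\sqrt{x}}$. When $|\beta|\asymp\sqrt{T}$ a genuine interior stationary point does exist, but its amplitude is $e^{\alpha\sqrt{x}\,|\beta|/\sqrt{\beta^2+\pi^2T}}\le e^{\alpha\sqrt{x}/\sqrt{1+\pi^2}}$ — a different (smaller) constant than $\sqrt{2/(2+\pi^2)}$. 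In the paper, $\sqrt{2/(2+\pi^2)}$ is not a saddle-point value at all: it comes from optimizing the truncation parameter $\eta$ that splits the sum into a bulk $l^2<\eta Tx$ (handled by the rectangle contour of height $u\sqrt{x}$, with the three inequalities \eqref{cond2} balancing the growth of $\mathrm{Re}\sqrt{x-z^2/T}$ and $\beta\,\mathrm{Im}\sqrt{x-z^2/T}$ against the $e^{-\pi|\mathrm{Im}\,z|}$ gain) and a tail $\eta Tx\le l^2<Tx$ handled by Process B plus Abel summation. Your proposal never engages with this balance: to lower $e^{\alpha\sqrt{x}}$ to $e^{\alpha c'\sqrt{x}}$ you must shift the contour a distance of order $\sqrt{x}$ and then control both the real-part growth of the root and the $\beta$ times its imaginary part along the shifted path and near the branch cut; that is the actual content of the proof and it is missing.

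Two smaller inaccuracies point the same way. The sharp cutoff at $u=\pm\sqrt{Tx}$ gives a jump of height $1$, hence Fourier decay $O(1/|\xi|)$, not "non-decaying tails"; the paper's additive $\sqrt{T}$ instead comes from estimating the final block $Tx-T\le l^2<Tx$ trivially after Process B is applied on the rest, together with the $1/\sqrt{f''}\asymp\sqrt{T\sqrt{x}/\beta}$ factor from Process B (your $\sqrt{T/(|\beta|+1)}$ heuristic is the right order, but only where an interior stationary point exists, i.e.\ large $|\beta|$; for small $\beta$ it has to come from a different argument). And the fallback for small $\beta$ via partial summation against Theorem \ref{main1} costs a factor of the total variation of $e^{i\beta\sqrt{x-l^2/T}}$, which is $1+O(|\beta|\sqrt{x})$, so it cannot be run "uniformly across $0\le\beta<\sqrt{T}$" as claimed; it only helps when $|\beta|\sqrt{x}$ is small. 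As it stands, the proposal is a plausible program whose decisive quantitative step is unproved and, in the form stated, incorrect.
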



Note that if $\beta=0$ and $T$ sufficiently large, theorem \ref{main2} becomes a special case of theorem \ref{main1} for $a=1$, $b,d=0$, and $c\rightarrow 0$ with a weaker result.

Even these theorems do not exhaust all the cancellation types of oscillatory sums of this
form, for we can replace the square-root by a fourth-
root, and then replace the quadratic polynomial with a quartic. We will
not bother to develop the most general theorem possible here.
Next, we prove three applications for these oscillation sums.

\bigskip

\bigskip


\subsection{Applications to the Chebyshev $\Psi$ function}

We show that in the ``Weak pentagonal number theorem" we can replace the partition function $p(n)$ with Chebyshev $\Psi$ function.


\begin{theorem}\label{psiresult1}
Assume $\epsilon>0$, $x$ is large enough and $T=e^{\frac{4}{3}(1-\frac{1}{\sqrt{1+\pi^2}})\sqrt{x}}$, then 
we have
\begin{align}\label{psipentagonalequation}
\sum_{|\ell|<\sqrt{Tx}}(-1)^{\ell} \Psi\left(e^{\sqrt{x-\ell^2/T}}\right)\ll e^{(\frac{2}{3}+\frac{1}{3\sqrt{1+\pi^2}}+\epsilon)\sqrt{x}}:= e^{w\sqrt{x}}.
\end{align}
\end{theorem}


\bigskip

We give an argument to show a relation between the theorem and the distribution of prime numbers. A weak version of theorem can be written as
\begin{align*}
\Psi(e^{\sqrt{x}})=2\sum_{0<\ell<\sqrt{xT}/2} \left(\Psi\left(e^{\sqrt{x-\frac{(2\ell -1)^2}{T}}}\right)-\Psi\left(e^{\sqrt{x-\frac{(2\ell)^2}{T}}}\right)\right)+O\left(e^{(\frac{7}{9}+\epsilon)\sqrt{x}}\right) \quad \text{where } T:= e^{\frac{8\sqrt{x}}{9}}.
\end{align*}
Consider the following set of intervals:
\begin{align*}
I:= \bigcup_{0<\ell<\sqrt{xT}/2}\left(e^{\sqrt{x-\frac{(2\ell)^2}{T}}},e^{\sqrt{x-\frac{(2\ell-1)^2}{T}}}\right)
\end{align*}
One can see that the measure of $I$ is almost half of the length of the interval $[0,e^{\sqrt{x}}]$. Roughly speaking theorem \ref{psiresult1} states that the number of primes in $I$, with weight $\log(p)$, is half of the number of primes, with the same weight. This prime counting gives a stronger result than one would
get using a strong form of the Prime Number Theorem and also the Riemann
Hypothesis(RH), where one naively estimates the $\Psi$ function on each of the
intervals. Because the widths of the intervals are smaller than $e^{\frac{\sqrt{x}}{2}
}$, making the Riemann Hypothesis estimate ``trivial". However, a less naive argument can give an improvement like corollary \ref{lem:RHassumption}. See Table \ref{Prime} for comparison.


\begin{table}[h!]
\begin{center}
\begin{tabular}{ |c |c| c| c| c | }
\hline
 \text{ PNT} & \text{Naive RH + Theorem \ref{main1}} & \text{Theorem \ref{psiresult1} unconditionally} & \text{Corollary \ref{lem:RHassumption} with RH} \\
\hline
  1.46 & 0.96 & 0.78 & 0.38 \\
\hline
\end{tabular}
\caption{The upper bound of $w$ in equation \eqref{psipentagonalequation}}
\end{center}
\label{Prime}
\end{table}



\begin{corollary}\label{lem:RHassumption}
Assuming RH and the same notation as in theorem \ref{psiresult1}
\begin{align}\label{RHassumption}
\sum_{|\ell|<\sqrt{xT}}(-1)^{\ell} \Psi\left(e^{\sqrt{x-\ell^2/T}}\right)\lesssim e^{(\frac{1}{3}+\frac{1}{6\sqrt{1+\pi^2}})\sqrt{x}}\lesssim e^{0.38\sqrt{x}}.
\end{align}
\end{corollary}


The proof needs careful computations of a cancellation sum involving zeroes of the Riemann zeta function. In fact, we use our cancellation formula to control the low-height zeroes; The Van der Corput bound for exponential sums combined with the Montgomery Mean-value theorem to control the high-height zeroes.


\begin{remark}
Note that numerical results up to $x<300$ show a very smaller error term in comparison to \eqref{RHassumption}. In particular, for example,
\begin{align*}
\left\vert\sum_{l<2400}(-1)^l\Psi(e^{\sqrt{300-l^2/T}})\right\vert < 100 \quad \text{ where }T\sim 20000.
\end{align*}
\end{remark}



\begin{remark}
A more applicable identity may be one having with fewer terms (with lower frequency) in \eqref{psipentagonalequation}. We can choose the parameters to get
\begin{align*}
\sum_{l^2<xe^{2\epsilon\sqrt{x}}}(-1)^l\Psi(e^{\sqrt{x-l^2e^{-2\epsilon\sqrt{x}}}})\lesssim x^2e^{(1-\epsilon)\sqrt{x}}.
\end{align*}
This identity does not give the same level of cancellation as RH anymore but still is better than the best cancellation one can get from the current unconditional estimates for $\Psi$ function. Also, the advantage is that the intervals $\left(e^{\sqrt{x-(2\ell)^2\epsilon\sqrt{x}}},e^{\sqrt{x-(2\ell-1)^2\epsilon\sqrt{x}}}\right)$ are not as small as what we had in \eqref{psipentagonalequation}. So it possibly is more suitable for combinatorial applications.
\end{remark}


\bigskip


\subsection{Applications to the usual and restricted partitions}
A generalization of the Pentagonal Number Theorem is the second application of the cancellation result. It is an interesting question to find the second dominant term of general,``Meinardus type" integer partitions. Our result is applicable in general if the second term of Meinardus's Theorem is known. But the known asymptotic formulas rely heavily on analytic properties of the parts. For many partition functions $\lambda(n)$, we see a formula like 
\begin{align}\label{1}
\lambda(n)\sim \left(g(n)\right)^qe^{\left(k(n)\right)^{\theta}}\left(1-\frac{1}{(h(n))^r}\right)+O(\lambda(n)^{s})
\end{align}
where $0<s<1$ and $\theta,r,q>0$ and $k(n)$ is a linear polynomial and $g(n),h(n)$ are rational functions. For example for the usual partition function we have \begin{align*}
g(n)=\frac{\sqrt{12}}{24n-1}\quad ,\quad
h(n)=\frac{\pi^2}{36}(24n-1)\quad , \quad
k(n)=\frac{\pi^2}{36}(24n-1) \quad ,\quad
s=\theta=\frac{q}{2}=r=\frac{1}{2}
\end{align*}
Assuming a partition function has form \eqref{1}, we can conclude that for a quadratic polynomial $t(n)=an^2+bn+d$ as stated in theorem \ref{main1}  we have
\begin{align*}
\sum_{\ell: t(\ell)<n}(-1)^l \lambda(n-t(\ell)) \ll \lambda^{\kappa}(n)
\end{align*}
where $\kappa=\max(w,s)$ and $w$ is defined as in Theorem \ref{main1}, and $s$ in \eqref{1}. As long as $\kappa<1$, we can get a nontrivial approximation of the Pentagonal Number Theorem. We will verify this inequality for some polynomials and give a few specific examples.

\bigskip

First, we mention a weak pentagonal number theorem for certain approximations of the partition function.


\begin{proposition}\label{pentcorollary}
Let
\begin{align*}
p_1(x)&=\frac{e^{\pi\sqrt{\frac{2x}{3}}}}{4x\sqrt{3}}\\
p_2(x)&=(\frac{\sqrt{12}}{24x-1}-\frac{6\sqrt{12}}{\pi(24x-1)^{\frac{3}{2}}})e^{\frac{\pi}{6}\sqrt{24x-1}}\\
p_3(x)&=(\frac{\sqrt{6}(-1)^x}{24x-1}-\frac{12\sqrt{6}(-1)^x}{\pi(24x-1)^{\frac{3}{2}}})e^{\frac{\pi}{12}\sqrt{24x-1}}
\end{align*}
be the asymptotic approximation of partition function, the contribution for $k=1$, and the contribution for $k=2$ in  the Ramanujan-Hardy-Rademacher formula \eqref{RHR}, respectively. Then
\begin{align}
&\sum_{G_l<x}(-1)^{l}p_1(x-G_l)\ll p(x)^{0.31}\label{nonsensitive}\\
&\sum_{G_l<x}(-1)^lp_2(x-G_l)\ll p(x)^{0.21} \label{sensitive}\\
&\sum_{G_l<x}(-1)^l\sqrt{p_1(x-G_l)}\ll p(x)^{0.13}\label{pentuu}\\
&\sum_{l^2<x}p_3(x-l^2)\ll p(x)^{0.11} \label{thirdfourthestimate}
\end{align}
\end{proposition}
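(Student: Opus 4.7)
The plan is to apply Theorem \ref{main1} case by case. In each subcase I first rewrite $p_i(x-f(\ell))$ in the normalised form $h(\ell)\,e^{c\sqrt{x-f(\ell)}}$, read off the parameters $(a,b,d,c)$ from the quadratic $f$ and from the exponential rate, optimise $\Delta(r)$ numerically to extract $w$, and then translate the resulting bound $\sqrt{x}\,e^{wc\sqrt{x}}$ into a power of $p(x)$ via $\log p(x)\sim \pi\sqrt{2x/3}$.

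For \eqref{nonsensitive}, taking $f(\ell)=G_\ell=(3\ell^2-\ell)/2$ gives $a=3/2$, $b=-1/2$, $d=0$, and the exponent of $p_1$ gives $c=\pi\sqrt{2/3}$; this is the critical regime $ac^2=\pi^2$ singled out in the remark after Theorem \ref{main1}. The rational prefactor $(4(x-G_\ell)\sqrt{3})^{-1}$ is absorbed into $h$; the optimisation of $\Delta$ returns $w\approx 0.30$, equivalently $p(x)^{0.30}$. The estimate \eqref{pentuu} is parallel, except that the square root halves the exponential rate to $c/2=\pi/\sqrt{6}$ and softens the prefactor to $(4(x-G_\ell)\sqrt{3})^{-1/2}$; re-maximising $\Delta$ with parameters $(a,c/2)$ yields the stated $0.065$. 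For \eqref{thirdfourthestimate} the complex factor $e^{\pi i(x-\ell^2)}$ is first handled by writing it as $e^{\pi ix}(-1)^\ell$ (using $\ell^2\equiv \ell\pmod 2$), which reduces the sum modulo the unit $e^{\pi ix}$ to an alternating sum of the two remaining bracketed terms, with $f(\ell)=\ell^2$, $a=1$, $b=d=0$, and rate $\pi\sqrt{24}/12=\pi/\sqrt{6}$; an analogous $\Delta$-optimisation returns $0.065$.

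The sharp estimate \eqref{sensitive} cannot be obtained term-by-term: the two summands inside the bracket of $p_2$ share the same exponential rate $c$, and a direct use of Theorem \ref{main1} would only deliver the $p(x)^{0.30}$ bound. The plan here is to subtract the exact Pentagonal Number Theorem identity $\sum_{G_\ell<x}(-1)^\ell p(x-G_\ell)=0$, so that the target sum becomes $-\sum_{G_\ell<x}(-1)^\ell (p-p_2)(x-G_\ell)$, and then to feed in the Ramanujan--Hardy--Rademacher expansion, which identifies $p-p_2$ with the next Rademacher piece at exponential rate $c/2$ carrying a phase $e^{\pi i(x-G_\ell)}$. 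The sign $(-1)^\ell(-1)^{G_\ell}$ has period four in $\ell$ and can be written as the real part of $(1+i)\,i^\ell$, converting the resulting estimate into two sums of the type handled by Theorems \ref{main1} and \ref{main2} at the smaller rate $c/2$; the output $p(x)^{w/2}$ then recovers $p(x)^{0.07}$. The main obstacle is exactly this reduction: one must verify that the tail $p-p_2-p_3$ really is negligible relative to the target, and that the period-four phase produced by $G_\ell\bmod 4$ does not spoil the cancellation; once these are in place, the $\Delta$-optimisations and the translations to powers of $p(x)$ are straightforward numerical exercises.
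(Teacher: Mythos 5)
Your treatment of \eqref{nonsensitive}, \eqref{pentuu} and \eqref{thirdfourthestimate} coincides with the paper's: these are read off from Theorem \ref{main1} with $(a,c)=(3/2,\pi\sqrt{2/3})$, $(3/2,\pi/\sqrt{6})$ and $(1,\pi/\sqrt{6})$ respectively, and the phase $e^{\pi i(x-\ell^2)}$ in $p_3$ supplying the needed alternating sign is exactly the paper's remark.

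The problem is \eqref{sensitive}. The paper does not subtract the exact Pentagonal Number Theorem at all; it proves \eqref{sensitive} directly by integrating $e^{\pi\sqrt{(24(x-z(3z-1)/2)-1)/36}}/\sin^3(\pi z)$ around the same rectangle as in Theorem \ref{main1}: the third-order poles at the integers reproduce precisely the two-term combination $p_2(x-G_\ell)$ (the correction $1-\frac{6}{\pi\sqrt{24(x-G_\ell)}}$ comes out of the residue computation at a triple pole), and the decisive gain is that $|\sin^3(\pi z)|\sim\frac{1}{8}e^{3\pi b\sqrt{x}}$ on the horizontal legs, so the penalty in the exponent is $3\pi b$ rather than $\pi b$; optimizing the contour height then yields $0.07$. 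Your proposed route leaves its two hard steps unresolved, and they are genuine obstructions, not routine verifications. First, $p-p_2-p_3$ is not negligible term by term: each Rademacher term with $k\ge 3$ has size about $p(x)^{1/3+o(1)}$, vastly larger than $p(x)^{0.07}$, so you would need a cancellation estimate for every such term; their phases $A_k(x-G_\ell)$ are periodic in $\ell$ with period growing with $k$, which Theorems \ref{main1} and \ref{main2} (built around the weight $(-1)^\ell$ via the kernel $1/\sin(\pi z)$) do not cover, and you do not instead push the expansion far enough (roughly $k\gtrsim 15$) to bound the tail trivially. Second, even for the $k=2$ piece the combined sign $(-1)^\ell e^{-\pi i G_\ell}$ has period four, and treating $i^{\pm\ell}$ weights forces a kernel such as $e^{\pm i\pi z/2}/\sin(\pi z)$, whose decay on one horizontal leg is only $e^{-\pi b\sqrt{x}/2}$; redoing the optimization there is not covered by the stated theorems and does not obviously land at or below $0.07$ (the analogous sum at rate $\pi/\sqrt{6}$ with the full $(-1)^\ell$ cancellation already costs $0.065$, cf.\ \eqref{pentuu}), and you supply no computation showing that it does. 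So the argument for \eqref{sensitive} is incomplete exactly at the point where the paper's $\sin^3$ device does all the work.
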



We mentioned  \eqref{pentuu} here because they have very small error terms.
\begin{remark}
Equation \eqref{sensitive} gives us a very promising perspective to possibly prove the pentagonal number theorem using an analytic argument. We will state a possible future direction after giving its proof.
\end{remark}

Note that equation \eqref{thirdfourthestimate} does not have the factor $(-1)^l$, because $\sum_h \omega(h,2)$ in equation \eqref{RHR} is $\frac{(-1)^l}{\sqrt{2}}$. So it can cancels out the other $(-1)^l$ from the weak pentagonal number theorem to eliminate the cancellation. In fact, if we put $(-1)^l$, we get the following proposition.

\bigskip


\begin{proposition}\label{pack}
For large enough integer $x$
\begin{align*}
\sum_{l^2<x}(-1)^lp_3(x-l^2)\sim \frac{e^{\pi ix}\sqrt{p(x)}}{\sqrt[4]{8x}}.
\end{align*}
So if $p_4(x)=p_2(x)+p_3(x)$ is the first ``four" terms in the Ramanujan-Hardy-Rademacher expression for the partition function, then we get
\begin{align}\label{fulpack}
\sum_{l^2<x}(-1)^lp_4(x-l^2)\sim\frac{\sqrt{p(x)}}{\sqrt[4]{8x}},
\end{align}
which immediately implies (\ref{pentagonal_l2}).
\end{proposition}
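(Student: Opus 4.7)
The plan is to exploit a small arithmetic identity to strip the oscillation from the $p_3$-sum, then evaluate what remains by Laplace's method concentrated at $l=0$, and finally pass from $p_3$ to $p_4$ and from $p_4$ to $p$ using Theorem \ref{main1} and a Rademacher truncation estimate, respectively.

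First I would observe that for integer $x$ and $l$, since $l+l^2=l(l+1)$ is even,
\[
(-1)^l e^{\pi i(x-l^2)}=(-1)^{l+x+l^2}=(-1)^x.
\]
Inserting the explicit formula for $p_3$ into the sum therefore annihilates the phase and leaves a sum of strictly positive terms,
\[
\sum_{l^2<x}(-1)^l p_3(x-l^2)=(-1)^x\sum_{l^2<x}\left(\frac{\sqrt 6}{24(x-l^2)-1}-\frac{12\sqrt 6}{\pi(24(x-l^2)-1)^{3/2}}\right)e^{\frac{\pi}{12}\sqrt{24(x-l^2)-1}}.
\]
The delicate oscillatory cancellation is thus replaced by an ordinary Laplace-type asymptotic.

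Second I would apply Laplace's method around $l=0$. Writing $g(l):=\tfrac{\pi}{12}\sqrt{24(x-l^2)-1}$, one computes $g(0)=\tfrac{\pi}{12}\sqrt{24x-1}$, $g'(0)=0$, and $g''(0)=-\tfrac{2\pi}{\sqrt{24x-1}}$, so $e^{g(l)}\sim e^{g(0)}e^{-\pi l^2/\sqrt{24x-1}}$ on the Gaussian window $|l|\ll (24x-1)^{1/4}$; since this window is much shorter than $\sqrt{x}$, the rational prefactor may be replaced by its value $\sqrt 6/(24x-1)$ at $l=0$ at negligible cost. Poisson summation (or Euler--Maclaurin) then gives
\[
\sum_{l\in\mathbb Z}e^{-\pi l^2/\sqrt{24x-1}}=(24x-1)^{1/4}(1+o(1)),
\]
and the first-term estimate $\sqrt{p(x)}\sim 12^{1/4}(24x-1)^{-1/2}e^{\pi\sqrt{24x-1}/12}$ coming from $p(x)\sim p_2(x)$ lets us re-express $e^{\pi\sqrt{24x-1}/12}$ in terms of $\sqrt{p(x)}$. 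A short algebraic collapse ($\sqrt6/12^{1/4}=3^{1/4}$ and $3^{1/4}/24^{1/4}=8^{-1/4}=2^{-3/4}$) produces the claimed coefficient and the asymptotic $\sim \sqrt{p(x)}/\sqrt[4]{8x}$.

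For the $p_4$-statement I would split $p_4=p_2+p_3$ and bound $\sum(-1)^l p_2(x-l^2)$ by Theorem \ref{main1} with $a=1$, $b=d=0$, $c=\pi\sqrt{2/3}$ (the coefficient of $\sqrt{y}$ in the exponent of $p_2$), and a polynomial weight coming from the rational prefactor of $p_2$; a quick calculus check of the explicit $\Delta(r)$ shows its maximum $w$ is strictly less than $\tfrac12$, so $\sqrt x\,e^{wc\sqrt x}=o(\sqrt{p(x)}/x^{1/4})$ and the $p_3$-asymptotic transfers verbatim to $p_4$, yielding \eqref{fulpack}. Finally, \eqref{pentagonal_l2} follows by taking sufficiently many terms of the Ramanujan--Hardy--Rademacher expansion \eqref{RHR} to obtain $p(x)=p_4(x)+O(p(x)^{\beta})$ for some $\beta<\tfrac12$; the trivial bound $\sum_{l^2<x}|p(x-l^2)-p_4(x-l^2)|\ll\sqrt x\cdot p(x)^{\beta}$ is then itself $o(\sqrt{p(x)}/x^{1/4})$.

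The main obstacle will be the Laplace step: extracting the sharp constant $2^{-3/4}$, rather than merely an $O$-bound, requires honest error control on three pieces --- the cubic and higher Taylor remainder of $g$ (harmless once one truncates to $|l|\le(24x-1)^{1/4+\epsilon}$), the sum-to-integral error (small because the Fourier transform of a narrow Gaussian is itself narrow, so only the zero frequency contributes to leading order), and the super-polynomially decaying tail $|l|>(24x-1)^{1/4+\epsilon}$. Once these three are written out carefully the computation is mechanical; verifying $w<\tfrac12$ for the $p_2$-application of Theorem \ref{main1} is a short check on the explicit $\Delta(r)$.
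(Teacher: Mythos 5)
Your proposal is correct and follows essentially the same route as the paper: the paper likewise uses the fact that the $e^{\pi i(x-l^2)}$ phase in $p_3$ combines with $(-1)^l$ to kill the oscillation, then performs the same Gaussian (Laplace) approximation around $l=0$ with variance $\sigma^2=\sqrt{6x}/\pi$, replaces the sum by the Gaussian integral, and arrives at the same constant $2^{-3/4}x^{-1/4}\sqrt{p(x)}$. Your handling of the $p_4$ step via Theorem \ref{main1} (checking $w<\tfrac12$ for the $p_2$-sum) and of \eqref{pentagonal_l2} via Rademacher truncation with exponent $\beta<\tfrac12$ simply makes explicit what the paper leaves implicit, so there is no substantive difference in approach.
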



\bigskip

We mention another set of examples. It is reported in \cite[Theorem 4]{iseki} that $p(n;\alpha, M)$ the number of partitions with parts of the form $Mt\pm \alpha$, $1\leq \alpha\leq M-1$, and $(\alpha,M)=1$ is
\begin{align*}
p(n;\alpha, M)=\frac{\pi\csc(\frac{\pi \alpha}{M})}{\sqrt{12Mn-6\alpha^2+6M\alpha-M^2}}
I_1\left(\frac{\pi\sqrt{12Mn-6\alpha^2+6M\alpha-M^2}}{3M}\right)+O(e^{\frac{\pi\sqrt{n}}{\sqrt{3M}}}).
\end{align*}
Theorem \ref{main1} can show a weak pentagonal number expression like
\begin{align*}
\sum_{am^2+bm+d<x}(-1)^m p\left(x-am^2-bm-d;\alpha, M\right)\simeq\text{``small function w.r.t. }x,\alpha, a ,M\text{"}.
\end{align*}
where $I_1$ is the modified Bessel function of the first kind.  We take two cases $M=2$ and $M=5$ as examples. It is known that number of partitions into distinct parts, see \cite{hao}, is
\begin{align*}
q(n):= A\frac{d}{dx}\left(I_0\left(\pi \sqrt{\frac{2x+\frac{1}{12}}{3}}\right)\right)\Bigg\vert_{x=n}+O(\sqrt{q(n)}).
\end{align*}
where $A$ is a constant and $I_0$ is Bessel function.


\begin{corollary}\label{penbessel1}
Let $q_1(n)$ be the first two terms in the expansion of $q(n)$. For large $n$
\begin{align*}
\sum_{l^2\leq n}(-1)^l q_1(n-l^2)&\ll q(n)^{0.151}\\
\sum_{l^2\leq n}(-1)^l q(n-l^2)&\ll \sqrt{q(n)}.
\end{align*}
\end{corollary}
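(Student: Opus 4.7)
The plan is to reduce the first bound to Theorem~\ref{main1} applied to the explicit first two Bessel terms of $q$, and then to obtain the second bound by absorbing the pointwise error $|q(n)-q_1(n)|\ll\sqrt[3]{q(n)}$ after summing trivially over $l$. I first unfold $q_1(n)$: from the standard expansion $I_0(z)=\frac{e^z}{\sqrt{2\pi z}}(1+\frac{1}{8z}+O(z^{-2}))$ and the chain-rule identity $\frac{d}{dn}I_0(\pi\sqrt{(n+\tfrac{1}{24})/3})=I_1(\cdot)\cdot\frac{\pi^2}{6z(n)}$ with $z(n)=\pi\sqrt{(n+\tfrac{1}{24})/3}$, I obtain
\begin{align*}
q_1(n)=\frac{C_1\,e^{c\sqrt{n+1/24}}}{(n+\tfrac{1}{24})^{3/4}}+\frac{C_2\,e^{c\sqrt{n+1/24}}}{(n+\tfrac{1}{24})^{5/4}},\qquad c=\frac{\pi}{\sqrt{3}}.
\end{align*}
Setting $x:=n+\tfrac{1}{24}$, the sum $\sum_{l^2<n}(-1)^l q_1(n-l^2)$ becomes a linear combination, for $\tau\in\{3/4,5/4\}$, of sums of the form $\sum_{l^2<x}(-1)^l(x-l^2)^{-\tau}e^{c\sqrt{x-l^2}}$.

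Next I invoke Theorem~\ref{main1} with $a=1,\ b=d=0,\ c=\pi/\sqrt{3}$. The weight $(x-l^2)^{-\tau}$ is not literally of the admissible shape $(\alpha l+\beta)^t$ in the summation variable, so I intend to strip it off by Abel summation against the monotone smooth weight $g_l=(x-l^2)^{-\tau}$: writing $S_L=\sum_{l\le L}(-1)^l e^{c\sqrt{x-l^2}}$, Theorem~\ref{main1} bounds $|S_L|\ll\sqrt{x}\,e^{wc\sqrt{x}}$ uniformly in $L$, and $\sum_l g_l(-1)^l e^{c\sqrt{x-l^2}}$ is controlled by $\max_L|S_L|\cdot\mathrm{Var}(g)$, which is only a polynomial-in-$x$ loss. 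The tail with $x-l^2=O(1)$ is cut off and bounded trivially by $O(\sqrt{x})$.

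The exponent $w$ is then found by maximizing $\Delta(r)=\sqrt{r(\sqrt{r^2+4}+r)/2}-\sqrt{3}\,r$ over $r\geq 0$ (using $\pi/c=\sqrt{3}$). A direct numerical check locates the maximum near $r\approx 0.095$, where $\Delta(r)\approx 0.1511$, so $w\approx 0.151$. Since $q(n)\asymp n^{-3/4}e^{c\sqrt{n}}$, one has $e^{wc\sqrt{n}}\ll n^{O(1)}q(n)^{w}$, and the polynomial loss is absorbed into the exponential bound $O(q(n)^{0.151})$, giving the first estimate.

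For the second statement, I decompose $q(n)=q_1(n)+E(n)$ with $|E(n)|\ll\sqrt[3]{q(n)}$. The $q_1$-piece contributes $O(q(n)^{0.151})\subseteq O(\sqrt[3]{q(n)})$ by the above. The error piece contributes $\sum_{l^2\leq n}|E(n-l^2)|\ll\sqrt{n}\,\sqrt[3]{q(n)}$, which lies inside $O(\sqrt[3]{q(n)})$ in the exponential-rate sense standard for partition asymptotics. I expect the main technical obstacle to be the Abel-summation peeling of $(x-l^2)^{-\tau}$ near $l^2\to x$, where $g_l$ blows up: handling this cleanly requires truncating at $x-l^2\geq 1$, estimating the boundary contribution directly, and checking that the total variation of $g_l$ on the bulk is polynomial in $x$ so that the exponent $w$ provided by Theorem~\ref{main1} is preserved.
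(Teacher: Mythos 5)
Your reduction has a genuine gap at the Abel-summation step. You claim that Theorem~\ref{main1} gives $|S_L|\ll\sqrt{x}\,e^{wc\sqrt{x}}$ \emph{uniformly in $L$} for the partial sums $S_L=\sum_{l\le L}(-1)^l e^{c\sqrt{x-l^2}}$, but the theorem says nothing about partial sums: it bounds only the complete sum over all $l$ with $l^2<x$ (a two-sided, symmetric range), and the cancellation it expresses is precisely a global symmetric phenomenon. The partial sums are genuinely large: since $e^{c\sqrt{x-l^2}}$ is positive and decreasing in $|l|$, a one-sided alternating partial sum ending in the bulk is of size about half the last retained term, so for $L$ near $0$ one has $|S_L|\asymp e^{c\sqrt{x-L^2}}$, which is exponentially larger than $e^{wc\sqrt{x}}$ with $w\approx 0.151$. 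Consequently, summing by parts against $g_l=(x-l^2)^{-\tau}$ gives $\sum_L|S_L|\,|g_{L+1}-g_L|$ of size $e^{c\sqrt{x}}$ divided only by a power of $x$ — the cancellation is destroyed, and no truncation near $l^2\to x$ or bounded-variation bookkeeping repairs this, because the problem sits in the bulk of the range, not at the endpoint. So the first estimate (and hence the second, which relies on it) is not proved by your argument.

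The paper avoids this by never separating the weight from the exponential: it proves Lemma~\ref{lemmo}, a Bessel-function analogue of Theorem~\ref{main1}, by running the same residue/contour argument on $H(z)=h(z)I_{\alpha}\bigl(c\sqrt{x-q(z)}\bigr)/\sin(\pi z)$ and bounding $I_\alpha$ by its exponential size on the contour; the corollary is then a one-line application with $a=1$ and the constant $c$ coming from $q(n)=A\frac{d}{dn}I_0\bigl(\pi\sqrt{(n+\frac{1}{24})/3}\bigr)+O(q(n)^{1/3})$. (Incidentally, your value $c=\pi/\sqrt{3}$ is the one consistent with the exponent $0.151$; the value $\sqrt{2\pi^2/3}$ written in the paper's proof appears to be a slip.) The natural repair of your write-up is therefore to keep $(x-z^2)^{-\tau}$ inside the contour integral, i.e.\ apply the residue argument directly to $(x-z^2)^{-\tau}e^{c\sqrt{x-z^2}}/\sin(\pi z)$ (or cite Lemma~\ref{lemmo} with the Bessel form), where the weight is analytic off the branch cut and costs only a polynomial factor on the contour. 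Finally, note that your treatment of the error piece gives $\sqrt{n}\,q(n)^{1/3}$, which is $O(q(n)^{1/3+\epsilon})$ rather than $O(q(n)^{1/3})$; this looseness is shared with the paper's own statement, but you should not present it as lying inside $O(\sqrt[3]{q(n)})$ without comment.
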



Also for $M=5$, see \cite{lehner}, there exists a constant $A:=A_a>0$ and a constant $B>0$ such that
\begin{align*}
p(n;a,5)\sim \frac{B\pi\csc(\frac{\pi a}{5})}{(60n-A)^{\frac{3}{4}}}
e^{\frac{\pi\sqrt{60n-A}}{15}}
\end{align*}


\begin{corollary}\label{penbessel2}
Let $h(n)$ be the main contribution  in the expansion of $p(n;a,5)$. For large $n$
\begin{align*}
&\sum_{l^2\leq n}(-1)^l h(n-l^2)\ll p(n;a,5)^{0.13}\\
&\sum_{\ell^2\leq n}(-1)^{\ell} p(n-\ell^2;a,5)\ll \sqrt{p(n;a,5)}.
\end{align*}
\end{corollary}



\bigskip


\subsection{Applications to the Prouhet-Tarry-Escott Problem}

Another application of our method involves the so-called Prouhet-Tarry-Escott Problem. The problem is to determine, for a fixed integer $n \geq 1$,
the largest value $k$ (denote by $M(n)$) with $k \leq n-1$ for which there exist two sequences of $n$
integers $ a_1, \cdots, a_n$ and $b_1,\cdots, b_n$, say - such that for all $1\leq r\leq k$ we have (see \cite{Raghanderavan})
\begin{align}\label{terryescott}
\sum_{i=1}^na_i^r=&\sum_{i=1}^nb_i^r \nonumber\\
\sum_{i=1}^na_i^{k+1}\neq &\sum_{i=1}^nb_i^{k+1} \text{ and multi-sets } \{a_i\},\{b_i\} \textup{ are disjoint}. 
\end{align}

\bigskip

One could consider a weakening of this problem, where the left and right
hand sides of \eqref{terryescott} are merely required to be ``close to each other". One way to naturally view this approximation is to interpret $\{a_i\},\{b_i\}$ to be values taken on by two  discrete uniform random variables $A,B$ that takes on rational values and both of whose moments (up to a certain level) and their moment generating functions are ``close"; i.e. the probability density function of these random variables becomes almost the same.
Approximating moment generating functions is an important problem in the literature - see for example \cite{butler,xu}; and what we are interested in is that the probability space is a subset of $\mathbb{Q}$. This makes the problem non-trivial.

\medskip

This problem can be also viewed from another perspective that is related to the Vingradov mean value theorem. Consider $J_{n,k}(N)$ to be the number of solutions in equations \eqref{terryescott} and remove the restriction $a_i\neq b_j$ and assume that $1\leq a_i,b_i\leq N$. It was conjectured that
\begin{equation}\label{bourgain}
J_{n,k}(N) \ll_{\epsilon} N^{\epsilon}\left(N^n + N^{2n - \frac{1}{2}k(k+1)}\right).
\end{equation}
Note that there are $N^n$ trivial solutions in this setting. This conjecture has been proved for $k=3$ by Wooley at \cite{wooley16} and was proved for $k\geq 4$ at \cite{Bourgain16} by Bourgain, Demeter, and Guth. Note that this view is closely related to Waring's problem. For more information about this direction, please check the survey paper \cite{Pierce}. 

\bigskip


Let us formulate the problem as follows.
\begin{problem}\label{terryescottapproximation}
Let $0 < c = c(N,n,k) < 1$ be the smallest constant such that there
exist sequences of integers
\begin{align*}
1\leq a_1\leq a_2\leq \cdots\leq a_n\leq N \quad\text{ and }\quad 1\leq b_1\leq b_2\leq \cdots\leq b_n\leq N
\end{align*}
that do not overlap  and  for all $1\leq r\leq k $,
\begin{align}\label{appPTEproblem}
\left\vert\sum_{i=1}^{n}a_i^r-b_i^r\right\vert\leq N^{cr}
\end{align}
How small can we take $c$ to be for various ranges of $k$ and $n$?
\end{problem}


There has been little progress in solving the original PTE problem. For example for an ideal solution (when $k=n-1$) the largest known solution is for $n=12$, see \cite{Raghanderavan}. To our knowledge, the best constructive solution is perhaps for the range $k=O(\log(n))$. Using a pigeonhole argument we can do much better, and give non-constructive solutions with $k$ as large as $k \sim c \sqrt{n}$. In section \ref{PTE} we will briefly explain this argument, which gives one of the best known non-constructive ways to solve the problem \ref{terryescottapproximation}.

Note that using the Vingradov conjecture \eqref{bourgain}, the trivial upper bound becomes sharper when $k>2\sqrt{n}$ (the right hand side becomes $N^{n+\epsilon}$). So we expect to have harder time finding a nontrivial solution.
Even applying the pigeonhole argument to the approximate version (Problem \ref{terryescottapproximation}) we cannot make $k$ much larger; for example, we cannot prove the existence of non-decreasing sequences $a_i$ and $b_i$ such that
\begin{align*}
|\sum_i a_i^r - b_i^r| < N^{r(1-\frac{1}{\log n})} \quad\text{ for all }\quad 1 \leq r \leq \sqrt{n}(\log n)^2.
\end{align*}
In other word, one cannot guarantee that the value of $c$ in Problem \ref{terryescottapproximation} can be $c < 1 - 1/\log n$ when $k > \sqrt{n}(\log n)^2$. We will see that this range for $c$ is much, much weaker than what our construction gives. This suggests that it might be possible to beat the bounds that the pigeonhole principle gives for the exact version of the problem.
 We will give a proof to the following theorem, which states a \textit{constructive} solution for problem \ref{terryescottapproximation} when $k$ is much bigger than $\sqrt{n}$.


\begin{theorem}
Equation \eqref{appPTEproblem} has a constructive solution for $c\sim  \frac{13}{28}$  and $k \sim \frac{n^{6/7}}{\log(n)}$ as follows. For $1\leq i\leq \sqrt{N}$
\begin{align*}
a_i = N-(2i-2)^2\in \mathbb{N}\quad \text{ and }\quad
b_i=N-(2i-1)^2\in \mathbb{N} .
\end{align*}
Then for all $1\leq r\leq k\sim N^{3/7}$ we have
\begin{align}\label{pte1}
\sum_{1\leq i\leq n}a_i^r-\sum_{1\leq i\leq n} b_i^r\ll r^{r/2} N^{\frac{r+1}{4}} \ll N^{\frac{r+1}{4}+ \frac{r\log(r)}{2\log(N)}}.
\end{align}
\end{theorem}


In fact we prove a more general theorem In section \ref{PTE} as follows.


\begin{theorem}\label{pte0}
Let $L\geq 1$ and $m\in\mathbb{N}$ and define $M=\lfloor (2L)^{\frac{2m}{2m+1}}\rfloor$. Define for $1\leq i\leq n:=L^m$
\begin{align*}
a_i=M^{2m+1}-(2i-2)^2 \quad,
b_i=M^{2m+1}-(2i-1)^2 \quad , 1\leq r\leq k\ll_m \frac{M^m}{\log(M)}.
\end{align*}
Then
\begin{align}\label{pte1}
\sum_{1\leq i\leq L}a_i^r-\sum_{1\leq i\leq L} b_i^r\ll r^{\frac{r}{2}+\epsilon}M^{(2m+1)\frac{r}{4}+m} \ll r^{\frac{r}{2}+\epsilon} a_1^{\frac{r+1}{4}}.
\end{align}
So we have two sets of around $n$ integers less than $N:=M^{2m+1}$, and they are satisfying the equation \eqref{appPTEproblem} with $c\sim  \frac{1}{2}-\frac{1}{4m+2}+\epsilon$ and $k \simeq n^{1-\frac{1}{m}}$.
\end{theorem}



\begin{remark}
There is a conjecture in \cite{chen} stating that if $\{a_n\geq 0\},\{b_n\geq 0\}$ be an ideal solution of Prouhet-Tarry-Escott problem and $a_1<b_1$, then for all $i$
\begin{align}\label{conjchen}
(a_i-b_i)(a_{i+1}-b_{i+1})<0.
\end{align}
Although our example cannot resolve the conjecture, it shows that equation \eqref{conjchen} is not true for the solutions of Problem \ref{terryescottapproximation} for any $c$.
\end{remark}



\begin{remark}
Note that we can win by a constant factor - i.e. increase $M(n)$ by a constant, if we pick a suitable quadratic polynomial $q(l)$ instead of $l^2$.
\end{remark}


\bigskip

Lastly, we investigate the problem more concretely by viewing $a_i,b_i$ as polynomials. Then this cancellation sum can be considered as an operator in $\mathbb{Z}[x]$ which cuts the degree to half.


\begin{theorem}\label{polynomialcase}
Let $M \in \mathbb{N}$, and define $f_r(M) := \sum_{|\ell| < 2M} (-1)^{\ell} (4M^2 - \ell^2)^r$; Then, $f_r(M)$ is a polynomial of degree $r-1$ in $M$ when $r$ is even, and is a polynomial of degree $r$ in $M$ when $r$ is odd; that is, when $r$ is even,
$$ f_r(M) = c_0(r) + c_1(r) M + \cdots + c_{r-1}(r) M^{r-1}, $$
where $c_0(r),...,c_{r-1}(r)$ are integer functions of $r$ only (and not of $M$). The same general form holds for $r$ odd, except that the degree here is $r$, not $r-1$. Furthermore, under the assumption $r \ll \frac{M}{\log(M)}$ we have that all the coefficients have size $O(r^{r+\epsilon})$.
\end{theorem}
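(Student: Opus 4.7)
My plan is to write $f_r(M)$ as a sum of Euler-polynomial evaluations, then read off both its degree and the size of its coefficients. First I expand
\begin{equation*}
(4M^2-\ell^2)^r=\sum_{j=0}^r\binom{r}{j}(-1)^j\,4^{r-j}M^{2(r-j)}\ell^{2j},
\end{equation*}
so $f_r(M)=\sum_{j=0}^r\binom{r}{j}(-1)^j 4^{r-j}M^{2(r-j)}\,S_j$ where $S_j:=\sum_{|\ell|<2M}(-1)^\ell\ell^{2j}$. Telescoping the defining relation $E_k(x)+E_k(x+1)=2x^k$ of Euler polynomials yields $\sum_{\ell=0}^{n-1}(-1)^\ell\ell^k=\tfrac12[E_k(0)+(-1)^{n-1}E_k(n)]$, and the symmetry $E_k(1-x)=(-1)^kE_k(x)$ forces $E_{2j}(0)=0$ for $j\ge 1$. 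Specialising to $n=2M$ and using the evenness of $\ell^{2j}$ gives $S_j=-E_{2j}(2M)$ for $j\ge 1$ (and $S_0=-1$). Writing $x:=2M$ one arrives at
\begin{equation*}
f_r(M)=-\sum_{j=0}^r\binom{r}{j}(-1)^j\,x^{2(r-j)}E_{2j}(x),\qquad E_0\equiv 1.
\end{equation*}

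Second, I would extract the $x^k$ coefficient via the Appell expansion $E_{2j}(x)=\sum_{n=0}^{2j}\binom{2j}{n}E_{2j-n}(0)x^n$. Because $E_m(0)=0$ for every even $m\ge 2$, only terms with $m:=2r-k$ equal to $0$ or odd can contribute, and in that case the substitution $n=2j-m$ reduces matters to $[x^k]f_r=-E_m(0)\sum_{j}\binom{r}{j}(-1)^j\binom{2j}{m}$. The key miracle is the generating-function identity
\begin{equation*}
\sum_{j\ge 0}\binom{r}{j}(-1)^j\binom{2j}{m}=[t^m]\bigl(1-(1+t)^2\bigr)^r=(-1)^r[t^{m-r}](2+t)^r,
\end{equation*}
coming from $1-(1+t)^2=-t(2+t)$: it vanishes whenever $m<r$ and equals $(-1)^r\binom{r}{m-r}2^{2r-m}$ for $r\le m\le 2r$. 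Intersecting ``$m$ odd'' with ``$m\ge r$'' leaves $k=2r-m\in\{1,3,\ldots,r-1\}$ when $r$ is even and $k\in\{1,3,\ldots,r\}$ when $r$ is odd; and since $E_m(0)=-2(2^{m+1}-1)B_{m+1}/(m+1)$ is nonzero for every odd $m\ge 1$, the leading coefficient is genuinely nonzero. This fixes the degrees at $r-1$ (even $r$) and $r$ (odd $r$) exactly and shows that every coefficient depends on $r$ alone.

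For the size bound, the closed form $[M^k]f_r=-(-1)^r E_m(0)\binom{r}{m-r}2^{2k}$ (obtained because $[M^k]f_r=2^k[x^k]f_r$ merges $2^k$ with $2^{2r-m}=2^k$) combined with $|B_{2n}|\le 4(2n)!/(2\pi)^{2n}$ gives $|E_m(0)|\le 8\,m!/\pi^{m+1}$; together with $\binom{r}{m-r}\le 2^r$, $2^{2k}\le 4^r$, and $m\le 2r-1$ this yields $|[M^k]f_r|=O\bigl((2r)!\,(8/\pi^2)^r\bigr)=O((2r)!)$ since $8/\pi^2<1$. The main obstacle is the previous paragraph: spotting that the inner sum in $j$ collapses to $(-1)^r t^r(2+t)^r$, whose factor $t^r$ is precisely what enforces the surprising half-degree cancellation. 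Once this identity is in hand, everything else is routine bookkeeping with Euler numbers. The hypothesis $r\ll M/\log M$ is not needed for the coefficient bound itself; it enters naturally when one sums $\sum_i|c_i(r)|M^i$ in the applications to the Prouhet--Tarry--Escott problem.
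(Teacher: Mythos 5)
Your proposal is correct, but it follows a genuinely different route from the paper. The paper represents $f_r(M)$ as the contour integral $\frac{1}{2\pi i}\int_\gamma (4M^2-z^2)^r\,\frac{dz}{\sin(\pi z)}$, shows the horizontal legs are negligible, and reads off the asymptotic $|f_r(M)|\asymp_r M^{r-1}$ (for $r$ even) from the vertical legs near $z=\pm 2M$; polynomiality is obtained separately from the binomial theorem, the odd case is only sketched, and the $O((2r)!)$ coefficient bound comes from the same contour estimates, which is where the hypothesis $r\ll M/\log(M)$ enters. You instead reduce $\sum_{|\ell|<2M}(-1)^\ell\ell^{2j}$ to $-E_{2j}(2M)$ and exploit the collapse $\sum_j\binom{r}{j}(-1)^j(1+t)^{2j}=(-1)^rt^r(2+t)^r$, obtaining the exact closed form $[M^k]f_r=-(-1)^r E_{2r-k}(0)\binom{r}{r-k}4^{k}$, supported on odd $k\le r$. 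This buys you the degree statement for both parities at once, the extra information that only odd powers of $M$ occur, and a coefficient bound that is unconditional (no relation between $r$ and $M$ is needed, as you rightly note), which is cleaner and somewhat stronger than the paper's treatment. Two small repairs are worth making: (i) in the size estimate you combine $m!\le(2r-1)!$ with $\pi^{-(m+1)}\le\pi^{-2r}$, which are not simultaneously valid for all $k$; bounding each $k$ separately, $|[M^k]f_r|\ll (2r-k)!\,4^{k}\,2^{r}\pi^{-(2r-k+1)}\ll (2r)!$, fixes this and preserves the conclusion; (ii) like the paper (which merely asserts it), you do not verify that the coefficients are integers --- from your closed form this amounts to the $2$-adic divisibility of the tangent/Genocchi numbers hidden in $E_{2r-k}(0)$, and deserves a line; it does check out in examples, e.g. $f_2(M)=-2M$ and $f_4(M)=128M^3-34M$, consistent with your formula.
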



\bigskip


\section{Proof of the oscillation sums}

In this section we mainly prove theorems \ref{main1} and \ref{main2}. First, we mention a lemma.


\begin{lemma}\label{l:sqrt}
Let $z= A+iB$ be a complex number and $q(s)=as^2+bs+c$ be a quadratic polynomial and $x\in \mathbb{R}$. Then
\begin{align}
\text{Re}\left(\sqrt{x-q(z)} \right)= \sqrt{\frac{1}{2}\left(\sqrt{D^2+\left(2aAB + bB\right)^2} +D\right)}\label{real}\\
\text{Im}\left(\sqrt{x-q(z)} \right)= \sqrt{\frac{1}{2}\left(\sqrt{D^2+\left(2aAB + bB\right)^2} -D\right)}\label{imaginary}.
\end{align}
where
\begin{align*}
D := x - (A^2-B^2)a - bA -c.
\end{align*}
\end{lemma}



\begin{proof}
We only prove \eqref{real}.
We have
$$x-q(z) = x-(A^2-B^2)a-bA-c - i (2ABa+bB).$$
It impliess that
$$\text{Re}\left(\sqrt{x-q(z)} \right)= \sqrt[4]{D^2+\left(2aAB + bB\right)^2} \cos\left(\frac{1}{2}\tan^{-1}\left(\frac{2aAB + bB}{D}\right)\right).$$
Noting that $\cos^2(y)=\frac{1+\cos(2y)}{2}$ and $\cos(\arctan(y))=\frac{1}{\sqrt{1+y^2}}$
$$\text{Re}\left(\sqrt{x-q(z)} \right) = \sqrt[4]{D^2+\left(2aAB + bB\right)^2} \sqrt{\frac{1}{2} + \frac{1}{2\sqrt{1+\frac{(2aAB + bB)^2}{D^2}}}}.$$
Straightforward computation results in equation \eqref{real}.
\end{proof}


\bigskip


Next, we prove Theorem \ref{main1}.
\begin{proof}
Let $q(z):=az^2+bz+d$ and $f(z)=\sqrt{x-q(z)}$ with branch points $\alpha_1,\alpha_2$.
We choose $(-\infty,\alpha_1]\cup [\alpha_2,\infty)$ as the branch cut and let $G$ be the interior of the square with vertices
\begin{align*}
\pm(\sqrt{\frac{x}{a}}-\frac{2b}{a})\pm iu\sqrt{x},
\end{align*}
where $u>0$ will be chosen later. Note that $$\alpha_2>\sqrt{\frac{x}{a}}-\frac{2b}{a}$$
and we have a similar condition for $\alpha_1$. Without loss of generality we assume that $h$ is holomorphic inside $G$.


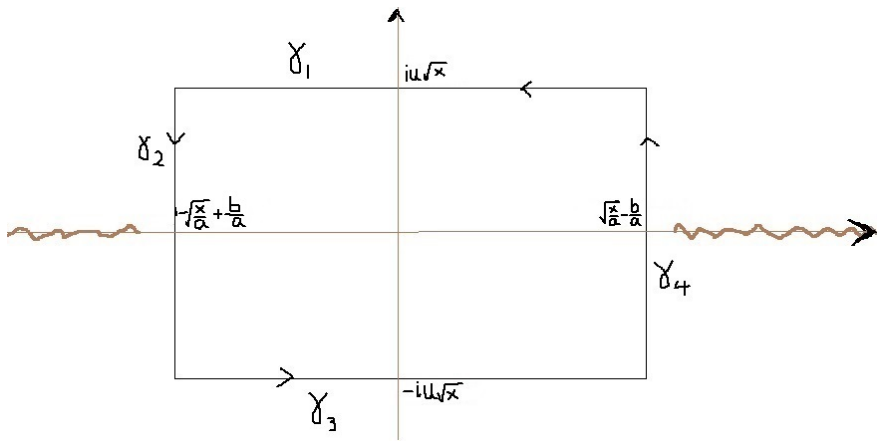
\begin{figure}[h!]

\tikzset{every picture/.style={line width=0.75pt}} 

\begin{tikzpicture}[x=0.75pt,y=0.75pt,yscale=-1,xscale=1]

\draw [line width=1.5] (270.8,1041.69) -- (508.8,1041.69)(294.6,857.19) -- (294.6,1062.19) (501.8,1036.69) -- (508.8,1041.69) -- (501.8,1046.69) (289.6,864.19) -- (294.6,857.19) -- (299.6,864.19) ;
\draw (176,929.69) -- (185,924.69) ;
\draw [line width=1.5] (54,1039.69) -- (271,1041.69) ;
\draw [line width=1.5] (294,1056.69) -- (296,1175.69) ;
\draw (130.1,930.19) -- (459.1,930.19) -- (459.1,1153.19) -- (130.1,1153.19) -- cycle ;
\draw [color={rgb, 255:red, 208; green, 2; blue, 27 } ,draw opacity=1 ][line width=1.5] (474,1041.69) -- (506,1040.69) ;
\draw [color={rgb, 255:red, 208; green, 2; blue, 27 } ,draw opacity=1 ][line width=1.5] (57,1039.69) -- (111,1039.69) ;
\draw [color={rgb, 255:red, 208; green, 2; blue, 27 } ,draw opacity=1 ][line width=3] [line join = round][line cap = round] (206,1037.69) .. controls (206,1038.02) and (206,1038.35) .. (206,1038.69) ;
\draw [color={rgb, 255:red, 208; green, 2; blue, 27 } ,draw opacity=1 ][line width=3] [line join = round][line cap = round] (213,1037.69) .. controls (213,1037.69) and (213,1037.69) .. (213,1037.69) ;
\draw [color={rgb, 255:red, 208; green, 2; blue, 27 } ,draw opacity=1 ][line width=3] [line join = round][line cap = round] (220,1037.7) .. controls (220,1037.7) and (220,1037.7) .. (220,1037.7) ;
\draw [color={rgb, 255:red, 208; green, 2; blue, 27 } ,draw opacity=1 ][line width=3] [line join = round][line cap = round] (355,1039.69) .. controls (355,1039.69) and (355,1039.69) .. (355,1039.69) ;
\draw [color={rgb, 255:red, 208; green, 2; blue, 27 } ,draw opacity=1 ][line width=3] [line join = round][line cap = round] (364,1038.69) .. controls (364,1039.02) and (364,1039.35) .. (364,1039.69) ;
\draw [color={rgb, 255:red, 208; green, 2; blue, 27 } ,draw opacity=1 ][line width=3] [line join = round][line cap = round] (371,1038.69) .. controls (371,1039.02) and (371,1039.35) .. (371,1039.69) ;
\draw (176,930.69) -- (184,935.69) ;
\draw (280,1152.69) -- (414,1153.67) ;
\draw [shift={(416,1153.69)}, rotate = 180.42] [color={rgb, 255:red, 0; green, 0; blue, 0 } ][line width=0.75] (10.93,-3.29) .. controls (6.95,-1.4) and (3.31,-0.3) .. (0,0) .. controls (3.31,0.3) and (6.95,1.4) .. (10.93,3.29) ;
\draw (493,1070.69) -- (465.54,1047.96) ;
\draw [shift={(464,1046.69)}, rotate = 399.61] [color={rgb, 255:red, 0; green, 0; blue, 0 } ][line width=0.75] (10.93,-3.29) .. controls (6.95,-1.4) and (3.31,-0.3) .. (0,0) .. controls (3.31,0.3) and (6.95,1.4) .. (10.93,3.29) ;
\draw (103,1004.69) -- (124.76,1032.12) ;
\draw [shift={(126,1033.69)}, rotate = 231.57999999999998] [color={rgb, 255:red, 0; green, 0; blue, 0 } ][line width=0.75] (10.93,-3.29) .. controls (6.95,-1.4) and (3.31,-0.3) .. (0,0) .. controls (3.31,0.3) and (6.95,1.4) .. (10.93,3.29) ;
\draw [color={rgb, 255:red, 208; green, 2; blue, 27 } ,draw opacity=1 ] (440,1041.7) ;
\draw [shift={(440,1041.7)}, rotate = 0] [color={rgb, 255:red, 208; green, 2; blue, 27 } ,draw opacity=1 ][fill={rgb, 255:red, 208; green, 2; blue, 27 } ,fill opacity=1 ][line width=0.75] (0, 0) circle [x radius= 3.35, y radius= 3.35] ;
\draw [color={rgb, 255:red, 208; green, 2; blue, 27 } ,draw opacity=1 ] (419,1041.7) ;
\draw [shift={(419,1041.7)}, rotate = 0] [color={rgb, 255:red, 208; green, 2; blue, 27 } ,draw opacity=1 ][fill={rgb, 255:red, 208; green, 2; blue, 27 } ,fill opacity=1 ][line width=0.75] (0, 0) circle [x radius= 3.35, y radius= 3.35] ;
\draw [color={rgb, 255:red, 208; green, 2; blue, 27 } ,draw opacity=1 ] (401,1041.7) ;
\draw [shift={(401,1041.7)}, rotate = 0] [color={rgb, 255:red, 208; green, 2; blue, 27 } ,draw opacity=1 ][fill={rgb, 255:red, 208; green, 2; blue, 27 } ,fill opacity=1 ][line width=0.75] (0, 0) circle [x radius= 3.35, y radius= 3.35] ;
\draw [color={rgb, 255:red, 208; green, 2; blue, 27 } ,draw opacity=1 ] (333,1041.7) ;
\draw [shift={(333,1041.7)}, rotate = 0] [color={rgb, 255:red, 208; green, 2; blue, 27 } ,draw opacity=1 ][fill={rgb, 255:red, 208; green, 2; blue, 27 } ,fill opacity=1 ][line width=0.75] (0, 0) circle [x radius= 3.35, y radius= 3.35] ;
\draw [color={rgb, 255:red, 208; green, 2; blue, 27 } ,draw opacity=1 ] (314,1041.7) ;
\draw [shift={(314,1041.7)}, rotate = 0] [color={rgb, 255:red, 208; green, 2; blue, 27 } ,draw opacity=1 ][fill={rgb, 255:red, 208; green, 2; blue, 27 } ,fill opacity=1 ][line width=0.75] (0, 0) circle [x radius= 3.35, y radius= 3.35] ;
\draw [color={rgb, 255:red, 208; green, 2; blue, 27 } ,draw opacity=1 ] (294.6,1041.69) ;
\draw [shift={(294.6,1041.69)}, rotate = 0] [color={rgb, 255:red, 208; green, 2; blue, 27 } ,draw opacity=1 ][fill={rgb, 255:red, 208; green, 2; blue, 27 } ,fill opacity=1 ][line width=0.75] (0, 0) circle [x radius= 3.35, y radius= 3.35] ;
\draw [color={rgb, 255:red, 208; green, 2; blue, 27 } ,draw opacity=1 ] (276,1041.7) ;
\draw [shift={(276,1041.7)}, rotate = 0] [color={rgb, 255:red, 208; green, 2; blue, 27 } ,draw opacity=1 ][fill={rgb, 255:red, 208; green, 2; blue, 27 } ,fill opacity=1 ][line width=0.75] (0, 0) circle [x radius= 3.35, y radius= 3.35] ;
\draw [color={rgb, 255:red, 208; green, 2; blue, 27 } ,draw opacity=1 ] (258,1041.7) ;
\draw [shift={(258,1041.7)}, rotate = 0] [color={rgb, 255:red, 208; green, 2; blue, 27 } ,draw opacity=1 ][fill={rgb, 255:red, 208; green, 2; blue, 27 } ,fill opacity=1 ][line width=0.75] (0, 0) circle [x radius= 3.35, y radius= 3.35] ;
\draw [color={rgb, 255:red, 208; green, 2; blue, 27 } ,draw opacity=1 ] (240,1041.7) ;
\draw [shift={(240,1041.7)}, rotate = 0] [color={rgb, 255:red, 208; green, 2; blue, 27 } ,draw opacity=1 ][fill={rgb, 255:red, 208; green, 2; blue, 27 } ,fill opacity=1 ][line width=0.75] (0, 0) circle [x radius= 3.35, y radius= 3.35] ;
\draw [color={rgb, 255:red, 208; green, 2; blue, 27 } ,draw opacity=1 ] (188,1040.7) ;
\draw [shift={(188,1040.7)}, rotate = 0] [color={rgb, 255:red, 208; green, 2; blue, 27 } ,draw opacity=1 ][fill={rgb, 255:red, 208; green, 2; blue, 27 } ,fill opacity=1 ][line width=0.75] (0, 0) circle [x radius= 3.35, y radius= 3.35] ;
\draw [color={rgb, 255:red, 208; green, 2; blue, 27 } ,draw opacity=1 ] (168,1040.7) ;
\draw [shift={(168,1040.7)}, rotate = 0] [color={rgb, 255:red, 208; green, 2; blue, 27 } ,draw opacity=1 ][fill={rgb, 255:red, 208; green, 2; blue, 27 } ,fill opacity=1 ][line width=0.75] (0, 0) circle [x radius= 3.35, y radius= 3.35] ;
\draw [color={rgb, 255:red, 208; green, 2; blue, 27 } ,draw opacity=1 ] (147,1040.7) ;
\draw [shift={(147,1040.7)}, rotate = 0] [color={rgb, 255:red, 208; green, 2; blue, 27 } ,draw opacity=1 ][fill={rgb, 255:red, 208; green, 2; blue, 27 } ,fill opacity=1 ][line width=0.75] (0, 0) circle [x radius= 3.35, y radius= 3.35] ;
\draw (147,1040.7) -- (169.93,1077.01) ;
\draw [shift={(171,1078.7)}, rotate = 237.72] [color={rgb, 255:red, 0; green, 0; blue, 0 } ][line width=0.75] (10.93,-3.29) .. controls (6.95,-1.4) and (3.31,-0.3) .. (0,0) .. controls (3.31,0.3) and (6.95,1.4) .. (10.93,3.29) ;
\draw (168,1040.7) -- (177.44,1072.78) ;
\draw [shift={(178,1074.7)}, rotate = 253.61] [color={rgb, 255:red, 0; green, 0; blue, 0 } ][line width=0.75] (10.93,-3.29) .. controls (6.95,-1.4) and (3.31,-0.3) .. (0,0) .. controls (3.31,0.3) and (6.95,1.4) .. (10.93,3.29) ;
\draw (188,1040.7) -- (188.94,1072.7) ;
\draw [shift={(189,1074.7)}, rotate = 268.32] [color={rgb, 255:red, 0; green, 0; blue, 0 } ][line width=0.75] (10.93,-3.29) .. controls (6.95,-1.4) and (3.31,-0.3) .. (0,0) .. controls (3.31,0.3) and (6.95,1.4) .. (10.93,3.29) ;
\draw (440,1041.7) -- (209.96,1085.33) ;
\draw [shift={(208,1085.7)}, rotate = 349.26] [color={rgb, 255:red, 0; green, 0; blue, 0 } ][line width=0.75] (10.93,-3.29) .. controls (6.95,-1.4) and (3.31,-0.3) .. (0,0) .. controls (3.31,0.3) and (6.95,1.4) .. (10.93,3.29) ;
\draw (401,1041.7) -- (207.96,1078.33) ;
\draw [shift={(206,1078.7)}, rotate = 349.26] [color={rgb, 255:red, 0; green, 0; blue, 0 } ][line width=0.75] (10.93,-3.29) .. controls (6.95,-1.4) and (3.31,-0.3) .. (0,0) .. controls (3.31,0.3) and (6.95,1.4) .. (10.93,3.29) ;

\draw (166,1079.7) node [anchor=north west][inner sep=0.75pt] [align=left] {Poles};
\draw (9,957.69) node [anchor=north west][inner sep=0.75pt] [xslant=-0.04] [align=left] {$\displaystyle -\sqrt{\frac{x}{a}} +\sqrt{\frac{b}{a}}$};
\draw (486,1059.69) node [anchor=north west][inner sep=0.75pt] [xslant=-0.04] [align=left] {$\displaystyle \sqrt{\frac{x}{a}} -\sqrt{\frac{b}{a}}$};
\draw (296,1152.69) node [anchor=north west][inner sep=0.75pt] [align=left] {$\displaystyle -u\sqrt{x}$};
\draw (296,898.69) node [anchor=north west][inner sep=0.75pt] [align=left] {$\displaystyle u\sqrt{x}$};
\draw (464.75,943.89) node [anchor=north west][inner sep=0.75pt] [align=left] {$\displaystyle \gamma _{4}$};
\draw (411,1154.69) node [anchor=north west][inner sep=0.75pt] [align=left] {$\displaystyle \gamma _{3}$};
\draw (106,1108.69) node [anchor=north west][inner sep=0.75pt] [align=left] {$\displaystyle \gamma _{2}$};
\draw (137,903.69) node [anchor=north west][inner sep=0.75pt] [align=left] {$\displaystyle \gamma _{1}$};

\end{tikzpicture}

\centering \caption{The contour $\gamma$}
\label{shekl}
\end{figure}


Let
$g(z)=e^{c f(z)}$, which is analytic inside $G$. Define
\begin{align*}
H(z)=\frac{g(z)h(z)}{\sin(\pi z)}.
\end{align*}
Assume that $\gamma$ is the boundary of $G$ (see figure \ref{shekl}). Using the residue theorem
\begin{align}\label{pent}
\int_{\gamma}H(z)dz=2\pi i \sum_{z_j:\text{ poles}}\text{Res}(H(z))|_{z_j}=2\pi i\sum_{q(n)<x}(-1)^nh(n)e^{c\sqrt{x-q(n)}}.
\end{align}
We wish to show that the integral in LHS has size of at most $e^{cw\sqrt{x}}$. First assume that we choose $z\in \gamma_1\cup\gamma_3$. So $z=t\pm iu\sqrt{x}$ for $-\sqrt{\frac{x}{a}}+\frac{2b}{a}<t<\sqrt{\frac{x}{a}}-\frac{2b}{a}$. If $t=o(\sqrt{x})$, then $\sqrt{x-az^2-bz-d}\sim \sqrt{x(1+au^2)}$. Otherwise by lemma \ref{l:sqrt}
\begin{align}\label{reza}
Re(\sqrt{x-az^2-bz-d})\ll \sqrt{\frac{\sqrt{(x-at^2+au^2x)^2+4a^2t^2u^2x}+x+au^2x-at^2}{2}}
\end{align}
A straightforward computation shows that the maximum of RHS of \eqref{reza} is at $t=o(\sqrt{x})$. So
\begin{align*}
Re(\sqrt{x-az^2-bx-d})\leq \sqrt{x(1+au^2)}.
\end{align*}
As $c>0$, we conclude in both cases that
$e^{c\sqrt{x-az^2-bz-d}}\ll e^{c\sqrt{x(1+au^2)}}.$
Also we have $|\sin(\pi z)|\sim \frac{1}{2}e^{\pi u\sqrt{x}}$.
So we will get that for $z\in \gamma_1,\gamma_3$
\begin{align}
|H(z)|\ll e^{c\sqrt{x(1+au^2)}-\pi u\sqrt{x}}
\end{align}
We desire to make the contribution from $z\in \gamma_1,\gamma_3$ to be approximately equal to the contribution from $z\in \gamma_2,\gamma_4$. It means that we need $c\sqrt{x(1+au^2)}-\pi u\sqrt{x}<wc\sqrt{x}$, where $w$ is defined in the theorem. We need to express $u$ in terms of $w$. 
After solving this we get two cases. If $\pi^2\neq ac^2$, then
\begin{align}\label{connd1}
\frac{-cw\pi+c\sqrt{\pi^2-ac^2+ac^2w^2}}{\pi^2-ac^2}<u.
\end{align}
Otherwise, we will get
\begin{align}\label{connd2}
\frac{c(1-w^2)}{2\pi w}<u.
\end{align}

Now we compute the case $z\in \gamma_2,\gamma_4$. We have $z=\pm\sqrt{\frac{x}{a}}\mp \frac{2b}{a}+it$ and $-u\sqrt{x}<t<u\sqrt{x}$. If $t=o(\sqrt{x})$, then $\sqrt{x-q(z)}=o(\sqrt{x})$. Otherwise, using lemma \ref{l:sqrt}
\begin{align*}
Re(\sqrt{x-q(z)})\ll
\sqrt{t\sqrt{a}\frac{\sqrt{at^2+4x}+t\sqrt{a}}{2}}.
\end{align*}
Let $t=r\sqrt{x}$. We need to choose a proper $\alpha$ as follows.
\begin{align*}
\alpha=\text{argmax}_{r} \left(c\sqrt{r\sqrt{a}\frac{\sqrt{ar^2+4}+r\sqrt{a}}{2}}-\pi r\right) \quad ,\quad 0\leq r\leq u.
\end{align*}
Also we assume that $\pm\sqrt{\frac{x}{a}}\mp \frac{2b}{a}$ is far enough from integers (otherwise we shift the legs $\gamma_2,\gamma_4$ slightly to avoid $Re(z)$ being near to integer). So we conclude that $|\sin(\pi rz)|>\lambda >0$ for a fixed $\lambda$. Then we have
\begin{align*}
\int_{\gamma_2,\gamma_4}\frac{e^{c\sqrt{x-q(z)}}h(z)}{\sin(\pi z)}\ll \sqrt{x}e^{c\sqrt{x\alpha\sqrt{a}\frac{\sqrt{a\alpha^2+4}+\alpha\sqrt{a}}{2}}-\pi \alpha\sqrt{x}} h(\sqrt{x})
\end{align*}
Finally in order to satisfy \eqref{connd1} and \eqref{connd2} and the fact that $u\geq r$, we choose
\begin{align*}
u=\max\left(\frac{-cw\pi+c\sqrt{\pi^2-ac^2+ac^2w^2}}{\pi^2-ac^2}, \alpha\right)\quad \text{or}\quad u=\max\left(\frac{c(1-w^2)}{2\pi w}
,\alpha\right).
\end{align*}
where
\begin{align*}
w=\sqrt{\alpha\sqrt{a}\frac{\sqrt{a\alpha^2+4}+\alpha\sqrt{a}}{2}}-\frac{\pi \alpha}{c}.
\end{align*}
\end{proof}


\bigskip

In this paper, we need two versions of the Van der Corput lemma.
The versions we give here are a little different than what is known in \cite{tao}. But these versions are straightforward and enough for the purpose of this paper.


\begin{lemma}\label{vandercorput}
[Simpler version] Let $F(x)$ be a second differentiable function in $(a,b)$; also $0<M<|F'(x)|$, and $
|G(x)|<R$ for $x\in (a,b)$. Assume that $\frac{G(x)}{F'(x)}$ is a piecewise monotone function. Then
\begin{align}
\int_{a}^{b}e^{iF(x)}G(x)dx\ll \frac{R}{M}.
\end{align}
\end{lemma}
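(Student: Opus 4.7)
The plan is a one-step integration by parts exploiting the oscillation of $e^{iF(x)}$. Because $F'$ is bounded away from zero on $(a,b)$, the quotient $G/F'$ is well-defined and bounded, and I can rewrite
\begin{align*}
\int_a^b e^{iF(x)} G(x)\, dx = \int_a^b \frac{G(x)}{iF'(x)} \cdot iF'(x) e^{iF(x)}\, dx = \int_a^b \frac{G(x)}{iF'(x)}\, d\bigl(e^{iF(x)}\bigr).
\end{align*}
Integration by parts then produces a boundary term $\bigl[\tfrac{G(x)}{iF'(x)} e^{iF(x)}\bigr]_a^b$ plus a remainder $-\int_a^b e^{iF(x)} \bigl(G/(iF')\bigr)'(x)\, dx$. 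The boundary term is controlled directly by $|G/F'| \le R/M$, contributing $O(R/M)$.

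For the remainder the monotonicity hypothesis is the key input. Since $G(x)/F'(x)$ is monotone on $(a,b)$, its derivative has constant sign, so by the triangle inequality and the fundamental theorem of calculus,
\begin{align*}
\Bigl|\int_a^b e^{iF(x)} \bigl(G/(iF')\bigr)'(x)\, dx\Bigr| \le \int_a^b \bigl|(G/F')'(x)\bigr|\, dx = \bigl|(G/F')(b) - (G/F')(a)\bigr| \le 2R/M.
\end{align*}
Summing the two contributions yields the claimed bound $\ll R/M$.

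I do not anticipate any real obstacle; this is the classical first-derivative test for oscillatory integrals, essentially Van der Corput's Process A. The monotonicity of $G/F'$ is precisely what lets one convert an $L^1$-norm of a derivative into a pointwise difference at the endpoints, avoiding any a priori estimate on the total variation of $G/F'$. If one wished to avoid even assuming differentiability of $G/F'$, one could alternatively invoke Bonnet's second mean value theorem on the real and imaginary parts of $e^{iF}$ against the monotone factor $G/F'$, each inner integral $\int F'e^{iF}$ being trivially bounded by $2$; but the integration-by-parts route above is the most transparent presentation.
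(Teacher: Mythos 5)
Your argument is correct: this is the classical first-derivative test, and since the paper states Lemma \ref{vandercorput} without proof (treating it as a standard, ``straightforward'' variant of Van der Corput), your integration-by-parts derivation --- boundary term bounded by $R/M$ at each endpoint, remainder bounded by the endpoint difference of the monotone factor $G/F'$ --- is exactly the intended standard argument. The only technical caveat is that a monotone $G/F'$ need not be differentiable (so the step $\int_a^b \lvert (G/F')'\rvert\,dx = \lvert (G/F')(b)-(G/F')(a)\rvert$ strictly requires absolute continuity, or a Riemann--Stieltjes formulation of the integration by parts), but you already cover this with the Bonnet second-mean-value alternative, so the proof stands as written.
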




\begin{lemma}\cite{montgomerybook}
\label{vandercorputprocessb} 
Suppose that $f(x)$ is a real-valued function such that $0<\lambda_2\leq f''(x)$ for all $x\in [a,b],$ and suppose that $|f^{(3)}(x)|\leq \lambda_3$ and that $|f^{(4)}(x)|\leq \lambda_4$ throughout this interval. Put $f'(a)=\alpha$, $f'(b)=\theta$. For integers $\nu\in [\alpha-1,\theta+1]$ let $x_{\nu}$ be the root of the equation $f'(x)=\nu$. Then
\begin{align*}
\sum_{a\leq n\leq b}e^{2\pi if(n)}=e^{\frac{\pi i}{4}}\sum_{\alpha-1\leq \nu\leq \theta+1}\frac{e^{2\pi i(f(x_{\nu})-\nu x_{\nu})}}{\sqrt{f''(x_{\nu})}}&+O\left(\log(4+\theta-\alpha)\right) +O(\lambda_2^{-\frac{1}{2}}(\theta-\alpha+2))
\nonumber\\
&+O\left((\lambda_3^2\lambda_2^{-3}+\lambda_4\lambda_2^{-2})(b-a)(\theta-\alpha+2)\right).
\end{align*}
\end{lemma}

Note that if $f''(x)<-\lambda_2<0$, then $e^{\frac{\pi i}{4}}$ will change $e^{-\frac{\pi i}{4}}$.

\bigskip


\textit{Proof of Theorem \ref{main2}. }

Let $T>0$ and $\gamma$ be the contour with vertices
\begin{align*}
\pm \sqrt{\eta xT}\pm iu\sqrt{x}
\end{align*}
where $\eta=\frac{\pi^2}{1+\pi^2}-\epsilon$ and  $0<u$ will be determined later (see figure \ref{shekl2}).
Let
\begin{align*}
h_{T}(z)=\frac{e^{(\alpha+i\beta)\sqrt{x-\frac{z^2}{T}}}}{\sin(\pi z)}
\end{align*}


\begin{figure}[h!]

\tikzset{every picture/.style={line width=0.75pt}} 

\begin{tikzpicture}[x=0.75pt,y=0.75pt,yscale=-1,xscale=1]

\draw [line width=1.5] (301.98,1497.2) -- (538.2,1497.2)(325.6,1377.5) -- (325.6,1510.5) (531.2,1492.2) -- (538.2,1497.2) -- (531.2,1502.2) (320.6,1384.5) -- (325.6,1377.5) -- (330.6,1384.5) ;
\draw [line width=1.5] (125,1495.2) -- (302,1497.2) ;
\draw [line width=2.25] (325,1507.2) -- (326.3,1584.68) -- (327,1594.2) ;
\draw (161.1,1430.7) -- (490.1,1430.7) -- (490.1,1562.7) -- (161.1,1562.7) -- cycle ;
\draw [color={rgb, 255:red, 208; green, 2; blue, 27 } ,draw opacity=1 ][line width=1.5] (117,1495.2) -- (154,1495.2) ;
\draw [color={rgb, 255:red, 208; green, 2; blue, 27 } ,draw opacity=1 ][line width=3] [line join = round][line cap = round] (237,1494.2) .. controls (237,1494.54) and (237,1494.87) .. (237,1495.2) ;
\draw [color={rgb, 255:red, 208; green, 2; blue, 27 } ,draw opacity=1 ][line width=3] [line join = round][line cap = round] (244,1494.2) .. controls (244,1494.2) and (244,1494.2) .. (244,1494.2) ;
\draw [color={rgb, 255:red, 208; green, 2; blue, 27 } ,draw opacity=1 ][line width=3] [line join = round][line cap = round] (251,1494.2) .. controls (251,1494.2) and (251,1494.2) .. (251,1494.2) ;
\draw [color={rgb, 255:red, 208; green, 2; blue, 27 } ,draw opacity=1 ][line width=3] [line join = round][line cap = round] (393,1495.2) .. controls (393,1495.2) and (393,1495.2) .. (393,1495.2) ;
\draw [color={rgb, 255:red, 208; green, 2; blue, 27 } ,draw opacity=1 ][line width=3] [line join = round][line cap = round] (401,1495.2) .. controls (401,1495.54) and (401,1495.87) .. (401,1496.2) ;
\draw [color={rgb, 255:red, 208; green, 2; blue, 27 } ,draw opacity=1 ][line width=3] [line join = round][line cap = round] (409,1495.2) .. controls (409,1495.54) and (409,1495.87) .. (409,1496.2) ;
\draw (311,1562.2) -- (445,1563.19) ;
\draw [shift={(447,1563.2)}, rotate = 180.42] [color={rgb, 255:red, 0; green, 0; blue, 0 } ][line width=0.75] (10.93,-3.29) .. controls (6.95,-1.4) and (3.31,-0.3) .. (0,0) .. controls (3.31,0.3) and (6.95,1.4) .. (10.93,3.29) ;
\draw (524,1526.2) -- (496.54,1503.48) ;
\draw [shift={(495,1502.2)}, rotate = 399.61] [color={rgb, 255:red, 0; green, 0; blue, 0 } ][line width=0.75] (10.93,-3.29) .. controls (6.95,-1.4) and (3.31,-0.3) .. (0,0) .. controls (3.31,0.3) and (6.95,1.4) .. (10.93,3.29) ;
\draw (134,1460.2) -- (155.76,1487.64) ;
\draw [shift={(157,1489.2)}, rotate = 231.57999999999998] [color={rgb, 255:red, 0; green, 0; blue, 0 } ][line width=0.75] (10.93,-3.29) .. controls (6.95,-1.4) and (3.31,-0.3) .. (0,0) .. controls (3.31,0.3) and (6.95,1.4) .. (10.93,3.29) ;
\draw (369,1431.7) -- (234,1430.72) ;
\draw [shift={(232,1430.7)}, rotate = 360.41999999999996] [color={rgb, 255:red, 0; green, 0; blue, 0 } ][line width=0.75] (10.93,-3.29) .. controls (6.95,-1.4) and (3.31,-0.3) .. (0,0) .. controls (3.31,0.3) and (6.95,1.4) .. (10.93,3.29) ;
\draw (172,1370.7) ;
\draw [color={rgb, 255:red, 208; green, 2; blue, 27 } ,draw opacity=1 ] (477,1497.7) ;
\draw [shift={(477,1497.7)}, rotate = 0] [color={rgb, 255:red, 208; green, 2; blue, 27 } ,draw opacity=1 ][fill={rgb, 255:red, 208; green, 2; blue, 27 } ,fill opacity=1 ][line width=0.75] (0, 0) circle [x radius= 3.35, y radius= 3.35] ;
\draw [color={rgb, 255:red, 208; green, 2; blue, 27 } ,draw opacity=1 ][line width=1.5] (506,1496.7) -- (533,1496.7) ;
\draw [color={rgb, 255:red, 208; green, 2; blue, 27 } ,draw opacity=1 ] (457,1497.7) ;
\draw [shift={(457,1497.7)}, rotate = 0] [color={rgb, 255:red, 208; green, 2; blue, 27 } ,draw opacity=1 ][fill={rgb, 255:red, 208; green, 2; blue, 27 } ,fill opacity=1 ][line width=0.75] (0, 0) circle [x radius= 3.35, y radius= 3.35] ;
\draw [color={rgb, 255:red, 208; green, 2; blue, 27 } ,draw opacity=1 ] (435,1497.7) ;
\draw [shift={(435,1497.7)}, rotate = 0] [color={rgb, 255:red, 208; green, 2; blue, 27 } ,draw opacity=1 ][fill={rgb, 255:red, 208; green, 2; blue, 27 } ,fill opacity=1 ][line width=0.75] (0, 0) circle [x radius= 3.35, y radius= 3.35] ;
\draw [color={rgb, 255:red, 208; green, 2; blue, 27 } ,draw opacity=1 ] (373,1496.7) ;
\draw [shift={(373,1496.7)}, rotate = 0] [color={rgb, 255:red, 208; green, 2; blue, 27 } ,draw opacity=1 ][fill={rgb, 255:red, 208; green, 2; blue, 27 } ,fill opacity=1 ][line width=0.75] (0, 0) circle [x radius= 3.35, y radius= 3.35] ;
\draw [color={rgb, 255:red, 208; green, 2; blue, 27 } ,draw opacity=1 ] (350,1496.7) ;
\draw [shift={(350,1496.7)}, rotate = 0] [color={rgb, 255:red, 208; green, 2; blue, 27 } ,draw opacity=1 ][fill={rgb, 255:red, 208; green, 2; blue, 27 } ,fill opacity=1 ][line width=0.75] (0, 0) circle [x radius= 3.35, y radius= 3.35] ;
\draw [color={rgb, 255:red, 208; green, 2; blue, 27 } ,draw opacity=1 ] (325.6,1497.2) ;
\draw [shift={(325.6,1497.2)}, rotate = 0] [color={rgb, 255:red, 208; green, 2; blue, 27 } ,draw opacity=1 ][fill={rgb, 255:red, 208; green, 2; blue, 27 } ,fill opacity=1 ][line width=0.75] (0, 0) circle [x radius= 3.35, y radius= 3.35] ;
\draw [color={rgb, 255:red, 208; green, 2; blue, 27 } ,draw opacity=1 ] (302,1497.2) ;
\draw [shift={(302,1497.2)}, rotate = 0] [color={rgb, 255:red, 208; green, 2; blue, 27 } ,draw opacity=1 ][fill={rgb, 255:red, 208; green, 2; blue, 27 } ,fill opacity=1 ][line width=0.75] (0, 0) circle [x radius= 3.35, y radius= 3.35] ;
\draw [color={rgb, 255:red, 208; green, 2; blue, 27 } ,draw opacity=1 ] (282,1496.7) ;
\draw [shift={(282,1496.7)}, rotate = 0] [color={rgb, 255:red, 208; green, 2; blue, 27 } ,draw opacity=1 ][fill={rgb, 255:red, 208; green, 2; blue, 27 } ,fill opacity=1 ][line width=0.75] (0, 0) circle [x radius= 3.35, y radius= 3.35] ;
\draw [color={rgb, 255:red, 208; green, 2; blue, 27 } ,draw opacity=1 ] (213.5,1496.2) ;
\draw [shift={(213.5,1496.2)}, rotate = 0] [color={rgb, 255:red, 208; green, 2; blue, 27 } ,draw opacity=1 ][fill={rgb, 255:red, 208; green, 2; blue, 27 } ,fill opacity=1 ][line width=0.75] (0, 0) circle [x radius= 3.35, y radius= 3.35] ;
\draw [color={rgb, 255:red, 208; green, 2; blue, 27 } ,draw opacity=1 ] (194,1495.7) ;
\draw [shift={(194,1495.7)}, rotate = 0] [color={rgb, 255:red, 208; green, 2; blue, 27 } ,draw opacity=1 ][fill={rgb, 255:red, 208; green, 2; blue, 27 } ,fill opacity=1 ][line width=0.75] (0, 0) circle [x radius= 3.35, y radius= 3.35] ;
\draw [color={rgb, 255:red, 208; green, 2; blue, 27 } ,draw opacity=1 ] (173,1495.7) ;
\draw [shift={(173,1495.7)}, rotate = 0] [color={rgb, 255:red, 208; green, 2; blue, 27 } ,draw opacity=1 ][fill={rgb, 255:red, 208; green, 2; blue, 27 } ,fill opacity=1 ][line width=0.75] (0, 0) circle [x radius= 3.35, y radius= 3.35] ;

\draw (77,1434.2) node [anchor=north west][inner sep=0.75pt] [xslant=-0.04] [align=left] {\mbox{-}$\displaystyle \sqrt{Tx\eta }$};
\draw (517,1515.2) node [anchor=north west][inner sep=0.75pt] [xslant=-0.04] [align=left] {$\displaystyle \sqrt{Tx\eta }$};
\draw (327,1559.2) node [anchor=north west][inner sep=0.75pt] [align=left] {$\displaystyle -u\sqrt{x}$};
\draw (327,1398.2) node [anchor=north west][inner sep=0.75pt] [align=left] {$\displaystyle u\sqrt{x}$};
\draw (489.75,1430.4) node [anchor=north west][inner sep=0.75pt] [align=left] {$\displaystyle \gamma _{4}$};
\draw (442,1561.2) node [anchor=north west][inner sep=0.75pt] [align=left] {$\displaystyle \gamma _{3}$};
\draw (138,1531.2) node [anchor=north west][inner sep=0.75pt] [align=left] {$\displaystyle \gamma _{2}$};
\draw (208,1401.2) node [anchor=north west][inner sep=0.75pt] [align=left] {$\displaystyle \gamma _{1}$};

\end{tikzpicture}

\centering \caption{Contour $\gamma$ for complex $c$ case}
\label{shekl2}
\end{figure}
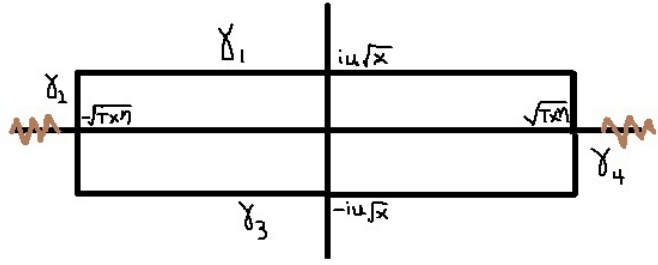


We take the branch cut to be $(-\infty,-\sqrt{xT}]\cup [\sqrt{xT},\infty)$. The Residue Theorem implies
\begin{align}
\int_{\gamma}h_{T}(z)dz=2\pi i\sum_{\ell^2<Tx\eta}(-1)^\ell e^{(\alpha+i\beta)\sqrt{x-\frac{\ell^2}{T}}}.
\end{align}
Now we compute the case where $z\in \gamma_1,\gamma_3$. So $z=t\pm iu\sqrt{x}$ and $-\sqrt{Tx\eta}<t<\sqrt{Tx\eta}$. As $x\ll \sqrt{T}$ we have
\begin{align*}
\sqrt{x-\frac{z^2}{T}}
=\sqrt{x-\frac{t^2-u^2x\pm 2iut\sqrt{x}}{T}}\sim \sqrt{x+\frac{-t^2\mp 2iut\sqrt{x}}{T}+O(\frac{1}{\sqrt{T}})}
\end{align*}
If $t\sqrt{x}=o(\sqrt{T})$, then noting $x-\frac{t^2}{T}=x+o(\frac{1}{x})$ we conclude that
\begin{align}
\sqrt{x-\frac{z^2}{T}}\sim\sqrt{x} + C(x,T)+iB(x,T), \text{where} B(x,T)=o(\frac{1}{\sqrt{T}}) \text{ and }C(x,T)=o(\frac{1}{x}).
\end{align}
Otherwise, recall that in the worst case, $t\sim \sqrt{xT}$. We use lemma \ref{l:sqrt} to get
\begin{align}\label{ghghabli}
\sqrt{x-\frac{z^2}{T}}
&\sim \sqrt{\frac{\sqrt{(x-\frac{t^2}{T})^2+\frac{4u^2t^2x}{T^2}}+(x-\frac{t^2}{T})}{2}}+i\sqrt{\frac{\sqrt{(x-\frac{t^2}{T})^2+\frac{4u^2t^2x}{T^2}}-(x-\frac{t^2}{T})}{2}}
\end{align}
Because of the range of values of $t$, $x-\frac{t^2}{T}\geq (1-\eta)x$.
So
\begin{align*}
\sqrt{(x-\frac{t^2}{T})^2+\frac{4u^2t^2x}{T^2}}-(x-\frac{t^2}{T})= \frac{\frac{4u^2t^2x}{T^2}}{\sqrt{(x-\frac{t^2}{T})^2+\frac{4u^2t^2x}{T^2}}+(x-\frac{t^2}{T})}\leq
\frac{2u^2t^2}{(1-\eta)T^2}.
\end{align*}
Therefore in all cases for $t$
\begin{align*}\label{reim}
Re(\sqrt{x-\frac{z^2}{T}})&\leq \sqrt{x}\quad \text{and} \quad
Im(\sqrt{x-\frac{z^2}{T}})
\leq \sqrt{\frac{xu^2\eta}{(1-\eta)T}}.
\end{align*}
Noting that $|\beta|\leq \sqrt{T}$
\begin{align*}
Re\left((\alpha+i\beta)\sqrt{x-\frac{z^2}{T}}\right)
\leq \alpha\sqrt{x}+\sqrt{\frac{xu^2\eta}{(1-\eta)}}\frac{|\beta|}{\sqrt{T}}\leq \sqrt{x}\left(\alpha+u\sqrt{\frac{\eta}{(1-\eta)}}\right)
\end{align*}
So for $z\in \gamma_1,\gamma_3$ we have
\begin{align}
\frac{e^{(\alpha+i\beta)\sqrt{x-\frac{z^2}{T}}}}{\sin(\pi z)}\ll e^{\left(\alpha+u\sqrt{\frac{\eta}{(1-\eta)}}-\pi u\right)\sqrt{x}}.
\end{align}
We will later choose proper $w,\eta,u$ such that
\begin{align}\label{thecondition}
\alpha+u\sqrt{\frac{\eta}{(1-\eta)}}-\pi u + \frac{\log(T)}{2\sqrt{x}}<w.
\end{align}
Next we assume that $z\in \gamma_2,\gamma_4$.
We have $z=\pm \sqrt{\eta xT}+it$ and $-u\sqrt{x}\leq t\leq u\sqrt{x}$.
As $t\ll \sqrt{x}\ll \sqrt[4]{T}$, then $\frac{t\sqrt{x}}{\sqrt{T}}\ll 1$ and $\frac{t^2}{T}\ll \frac{x}{T}=o(1)$. We use lemma \ref{l:sqrt} to conclude that
\begin{align*}
\sqrt{x-\frac{z^2}{T}}
&\sim \sqrt[4]{\frac{x}{T}}\left(\sqrt{\frac{\sqrt{4t^2\eta+xT(1-\eta)^2}+(1-\eta)\sqrt{xT}}{2}}+i\sqrt{\frac{\sqrt{4t^2\eta+xT(1-\eta)^2}-(1-\eta)\sqrt{xT}}{2}}\right)\nonumber\\
&\quad\quad\quad\quad\quad\quad\quad\quad\quad\quad\quad\quad\quad\quad\quad\quad\quad\quad\quad\quad\sim \sqrt{(1-\eta)x}+it\sqrt{\frac{\eta}{T(1-\eta)}}.
\end{align*}
This together with the fact that $|\beta|<\sqrt{T}$ imply that
\begin{align*}
Re\left((\alpha+i\beta)\sqrt{x-\frac{z^2}{T}}\right)
\leq \alpha\sqrt{(1-\eta)x}+\frac{|\beta|}{\sqrt{T}}\sqrt{\frac{\eta t^2}{1-\eta}}\leq \alpha\sqrt{(1-\eta)x}+|t|\sqrt{\frac{\eta}{1-\eta}}
\end{align*}
Again we assume that $\sqrt{xT\eta}$ is far away from the integers; so as $\sin(\pi z)>\lambda>0$ for some fixed $\lambda$, therefore for $z\in \gamma_2,\gamma_4$
\begin{align*}
\frac{e^{(\alpha+i\beta)\sqrt{x-\frac{z^2}{T}}}}{\sin(\pi z)}\ll e^{\alpha\sqrt{(1-\eta)x}+|t|\left(\sqrt{\frac{\eta}{1-\eta}}-\pi\right)}.
\end{align*}
As $\sqrt{\frac{\eta}{1-\eta}}<\pi$, the maximum of the following function occurs at $y=0$:
\begin{align*}
G(y)=\alpha\sqrt{(1-\eta)}+y\sqrt{\frac{\eta}{1-\eta}}-y\pi.
\end{align*}
We conclude that
\begin{align}\label{tikeaval}
2\pi i\sum_{\ell^2<Tx\eta}(-1)^\ell&e^{(\alpha+i\beta)\sqrt{x-\frac{\ell^2}{T}}}=\int_{\gamma}h_{T}(z)dz\nonumber\\
&\ll \sqrt{Tx}e^{(\alpha+u\sqrt{\frac{\eta}{(1-\eta)}}-\pi u)\sqrt{x}} + \sqrt{x}e^{\alpha\sqrt{(1-\eta)}\sqrt{x}}.
\end{align}
A straightforward calculation shows that
\begin{align*}
\left\vert\sum_{xT\eta\leq \ell^2<Tx}(-1)^le^{(\alpha+i\beta)\sqrt{x-\frac{\ell^2}{T}}}\right\vert \ll \sum_{xT\eta\leq \ell^2<Tx}e^{\alpha\sqrt{x-\frac{\ell^2}{T}}}\ll\sqrt{xT}e^{\alpha\sqrt{x(1-\eta)}}.
\end{align*}
For a sharper bound, we use lemma \ref{vandercorputprocessb} to control the tail. Without loss of generality assume that $\beta<0$. We prove that for $|\beta|\ll \sqrt{T}$, 
\begin{align}\label{e:oscilationsum}
\left\vert\sum_{xT\eta \leq \ell^2<Tx-T}(-1)^\ell e^{i\beta\sqrt{x-\frac{\ell^2}{T}}}\right\vert  \ll \frac{\sqrt{Tx^{3/2}}}{\sqrt{|\beta|+1}}+  \frac{\sqrt{Tx^{11}}}{|\beta|+1}+\log x .
\end{align}
It is trivial to get the bound for $|\beta|<1$, so we assume otherwise. Let $f(\ell):=\frac{1}{2} \ell+\frac{\beta}{2\pi}\left(x-\frac{\ell^2}{T}\right)^{1/2}$. Then 
\begin{align*}
    f'(\ell) &= \frac{1}{2} -\frac{\beta \ell}{2\pi T}\left(x-\frac{\ell^2}{T}\right)^{-1/2}
    && f''(\ell) = -\frac{\beta x}{2\pi T}(x-\frac{\ell^2}{T})^{-3/2}\nonumber\\
    f^{(3)}(\ell) &= -\frac{3\beta x\ell}{2\pi T^2}(x-\frac{\ell^2}{T})^{-5/2}
    && f^{(4)}(\ell) = -\frac{3\beta x}{2\pi T^2}(x+\frac{4\ell^2}{T})(x-\frac{\ell^2}{T})^{-7/2}.
\end{align*}
First we evaluate $f'$ at the endpoints. Without loss of generality we consider $\sqrt{xT\eta}<\ell<\sqrt{xT-T}$. Assuming that $\eta<0.9$, we have
\begin{align}
    f'(\sqrt{xT\eta}) &= \frac{1}{2} - \frac{\beta\sqrt{x\eta}}{2\pi \sqrt{T}}(x-\eta x)^{-1/2} \in [0,1]\\
    f'(\sqrt{xT-T}) &= \frac{1}{2} - \frac{\beta\sqrt{xT-T}}{2\pi T} <\sqrt{x}.
\end{align}
As $f'',f^{(3)},f^{(4)},f^{(5)}$ are positive, we can find $\lambda_2,\lambda_3,\lambda_4$ easily at the endpoints. 
\begin{align}
    \lambda_2 &= \inf_{\sqrt{Tx\eta}<\ell<\sqrt{xT-T}} f''(\ell) = f''(\sqrt{xT\eta}) = \frac{|\beta| }{2\pi  T\sqrt{x}(1-\eta)^{3/2}}\gg \frac{|\beta|}{T\sqrt{x}}\nonumber\\
    \lambda_3 &=\sup_{\sqrt{Tx\eta}<\ell<\sqrt{xT-T}} f^{(3)}(\ell) = f^{(3)}(\sqrt{xT-T}) = \frac{-3\beta x}{2\pi T^2}(xT-T)^{1/2}\ll \frac{|\beta|x^{3/2}}{T^{3/2}}\nonumber\\
    \lambda_4 &=\sup_{\sqrt{Tx\eta}<\ell<\sqrt{xT-T}} f^{(4)}(\ell) = f^{(4)}(\sqrt{xT-T}) = \frac{-3\beta x}{2\pi T^2}(5x-4)\ll \frac{|\beta|x^2}{T^2}.
\end{align}
It implies that
\begin{align*}
    \lambda_3^2\lambda_2^{-3}+\lambda_4\lambda_2^{-2} \ll \frac{x^{9/2}}{|\beta|+1}.
\end{align*}
Noting that $f'(\sqrt{xT-T})-f'(\sqrt{xT\eta}) =O(\sqrt{x})$ and applying lemma \ref{vandercorputprocessb} implies the equation \eqref{e:oscilationsum}.
For $xT\eta<t^2<xT-T$  we define
$$S(t):=\sum_{xT\eta\leq \ell^2<t^2}(-1)^\ell e^{i\beta\sqrt{x-\frac{\ell^2}{T}}}.$$ 
Similar to what we just did, we know that $|S(t)|\ll x^5t/\sqrt{|\beta|}\ll\sqrt{x^{11}T/|\beta|}$ for $xT\eta<t^2<xT-T$. Using Abel's summation formula we get
\begin{align}\label{abelsumformula}
\Bigg\vert\sum_{xT\eta\leq \ell^2<Tx-T}&(-1)^\ell e^{(\alpha+i\beta)\sqrt{x-\frac{\ell^2}{T}}}\Bigg\vert \nonumber\\
&\ll |S(\sqrt{xT-T})|e^{\alpha}+|S(\sqrt{\eta xT})|e^{\alpha\sqrt{(1-\eta)x}}+\left\vert\int_{\sqrt{\eta xT}}^{\sqrt{xT-T}}S(t)v(t)dt\right\vert
\end{align}
where $v(t):= \frac{d}{dt}\exp\left(\alpha\sqrt{x-\frac{t^2}{T}}\right)$. We bound the integral in the RHS.
\begin{align*}
\int_{\sqrt{\eta xT}}^{\sqrt{xT-T}}S(t)v(t)dt &= -\frac{\alpha}{T}\int_{\sqrt{\eta xT}}^{\sqrt{xT-T}}  \frac{tS(t) e^{\alpha\sqrt{x-\frac{t^2}{T}}}}{\sqrt{x-\frac{t^2}{T}}}dt.
\end{align*}
 Straightforward computation  gives that
\begin{align*}
G(t):= \frac{tS(t) e^{\alpha\sqrt{x-\frac{t^2}{T}}}}{\sqrt{x-\frac{t^2}{T}}} &\ll \left(\frac{\sqrt{Tx^{3/2}}}{\sqrt{|\beta|+1}}+ \log x + \frac{\sqrt{Tx^{11}}}{|\beta|+1}\right)e^{\alpha\sqrt{(1-\eta)x}}\sqrt{xT}
\end{align*}
It implies that
\begin{align*}
\left\vert\int_{\sqrt{\eta xT}}^{\sqrt{xT-T}}S(t)v(t)dt \right\vert&\ll \frac{\sqrt{xT}}{T}\left(\frac{\sqrt{Tx^{3/2}}}{\sqrt{|\beta|+1}}+ \log x + \frac{\sqrt{Tx^{11}}}{|\beta|+1}\right)e^{\alpha\sqrt{(1-\eta)x}}\sqrt{xT} \nonumber\\
&\ll  \left(\frac{\sqrt{Tx^{5/2}}}{\sqrt{|\beta|+1}}+\sqrt{x} \log x + \frac{\sqrt{Tx^{12}}}{|\beta|+1}\right)e^{\alpha\sqrt{(1-\eta)x}}.
\end{align*}
This, \eqref{e:oscilationsum}, and \eqref{abelsumformula} give
\begin{align*}
\Bigg\vert\sum_{xT\eta\leq \ell^2<Tx-T}&(-1)^\ell e^{(\alpha+i\beta)\sqrt{x-\frac{\ell^2}{T}}}\Bigg\vert \ll \left(\frac{\sqrt{Tx^{5/2}}}{\sqrt{|\beta|+1}}+\sqrt{x} \log x + \frac{\sqrt{Tx^{12}}}{|\beta|+1}\right)e^{\alpha\sqrt{(1-\eta)x}}.
\end{align*}
Considering the range of $\beta$ in our application, we conclude that
\begin{align}\label{tail}
\Bigg\vert \sum_{xT\eta\leq \ell^2<Tx}(-1)^\ell e^{(\alpha+i\beta)\sqrt{x-\frac{\ell^2}{T}}}\Bigg\vert &\leq \Bigg\vert \sum_{xT\eta\leq \ell^2<Tx-T}(-1)^\ell e^{(\alpha+i\beta)\sqrt{x-\frac{\ell^2}{T}}}\Bigg\vert+\Bigg\vert \sum_{xT-T\leq \ell^2<Tx}(-1)^\ell e^{(\alpha+i\beta)\sqrt{x-\frac{\ell^2}{T}}}\Bigg\vert \nonumber\\
&\ll \sqrt{\frac{x^3T}{|\beta|+1}} e^{\alpha\sqrt{(1-\eta)x}} + \sqrt{Tx}.
\end{align}
We used a trivial bound for the second sum. We want to have
\begin{align*}
\Bigg\vert \sum_{ \ell^2<Tx}(-1)^\ell e^{(\alpha+i\beta)\sqrt{x-\frac{\ell^2}{T}}}\Bigg\vert \ll \sqrt{\frac{T}{|\beta|+1}}e^{w\sqrt{x}}.
\end{align*}
Adding \eqref{tikeaval} and \eqref{tail} we need to have 
\begin{align}\label{cond2}
\begin{cases}
\alpha\sqrt{1-\eta}\leq w\\
\alpha+u\sqrt{\frac{\eta}{(1-\eta)}}-\pi u + \frac{1}{2\sqrt{x}}\log(|\beta|+1)\leq w.
\end{cases}
\end{align}
Remember that $\eta=\frac{\pi^2}{1+\pi^2}-\epsilon$. Comparing with $\beta$, if we choose $u$ large enough  then the left hand side of the second condition in \eqref{cond2} becomes negative. So
$$w=\alpha\sqrt{\frac{1}{1+\pi^2}}+\epsilon $$
from the first condition.
This completes the proof.
\qed


\bigskip


\section{Proof related to prime distribution}

Inspired by the proof of the Prime Number Theorem (PNT) we compute the following sum in two ways.
\begin{align}\label{equu}
\frac{1}{2\pi i}\sum_{\ell^2<Tx}(-1)^\ell\int_{1+\epsilon-i\sqrt{T}}^{1+\epsilon+i\sqrt{T}}\frac{\zeta'(s)}{\zeta(s)}\frac{e^{s\sqrt{x-\frac{\ell^2}{T}}}}{s}ds.
\end{align}
In this section, we assume that $T<e^{\frac{4}{3}\sqrt{x}}$.

\bigskip


\begin{lemma}
For large enough $x$
\begin{align}\label{firstway}
\sum_{\ell^2<Tx}&(-1)^\ell\int_{1+\epsilon-i\sqrt{T}}^{1+\epsilon+i\sqrt{T}}\frac{\zeta'(s)}{\zeta(s)}\frac{e^{s\sqrt{x-\frac{\ell^2}{T}}}}{s}ds\ll \sqrt{Tx}e^{(\frac{1}{\sqrt{1+\pi^2}}+\epsilon)\sqrt{x}}.
\end{align}
\end{lemma}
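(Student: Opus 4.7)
The plan is to interchange summation and integration, uniformly bound $\zeta'(s)/\zeta(s)$ on the contour, apply Theorem \ref{main2} to the inner $l$-sum for each fixed $s$, and then perform the remaining $t$-integral. Writing $s = 1+\epsilon+it$ with $t\in[-\sqrt{T},\sqrt{T}]$, the absolute convergence of $-\sum_n\Lambda(n)n^{-s}$ on $\mathrm{Re}(s)=1+\epsilon$ immediately gives $|\zeta'(s)/\zeta(s)| \leq C_\epsilon$ uniformly in $t$. After swapping the order (justified by absolute convergence), the left-hand side of \eqref{firstway} is bounded by
\begin{align*}
C_\epsilon \int_{-\sqrt{T}}^{\sqrt{T}} \frac{1}{|1+\epsilon+it|}\,\Bigg|\sum_{l^2<Tx}(-1)^l e^{(1+\epsilon+it)\sqrt{x-l^2/T}}\Bigg|\,dt.
\end{align*}

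For the inner sum I would apply Theorem \ref{main2} with $\alpha = 1+\epsilon$ and $\beta = t$; this needs $|\beta|<\sqrt{T}$ (matching the range of integration) and $x^2 = O(T)$ (which is comfortable under the standing assumption, since in the paper's intended regime $T$ is of order $e^{c\sqrt{x}}$). For any $\delta>0$, Theorem \ref{main2} then yields
\begin{align*}
\sum_{l^2<Tx}(-1)^l e^{(1+\epsilon+it)\sqrt{x-l^2/T}} = O\!\left(\sqrt{\tfrac{T}{|t|+1}}\,e^{(1+\epsilon)(\sqrt{2/(2+\pi^2)}+\delta)\sqrt{x}}+\sqrt{T}\right).
\end{align*}
Using $|1+\epsilon+it|^{-1}\ll (1+|t|)^{-1}$ and the fact that $\int_{\mathbb{R}}(1+|t|)^{-3/2}\,dt<\infty$, the $t$-integral of the first piece is $O\!\bigl(\sqrt{T}\,e^{(1+\epsilon)(\sqrt{2/(2+\pi^2)}+\delta)\sqrt{x}}\bigr)$, while the $\sqrt{T}$ piece contributes only $O(\sqrt{T}\log T)$. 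The standing assumption $T<e^{2\sqrt{x}}$ makes $\log T \ll \sqrt{x}$, so $\sqrt{T}\log T \ll \sqrt{Tx}$, which is absorbed by the target.

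The decisive numerical fact is $\sqrt{2/(2+\pi^2)}\approx 0.414 < 1/2$: choosing $\delta$ sufficiently small gives $(1+\epsilon)(\sqrt{2/(2+\pi^2)}+\delta)<(1+\epsilon)/2$, so the main piece is at most $\sqrt{T}\,e^{(1+\epsilon)\sqrt{x}/2}\le \sqrt{Tx}\,e^{(1+\epsilon)\sqrt{x}/2}$, exactly the bound claimed. The hardest part of the execution is not a deep idea but bookkeeping: one must verify that the hypotheses of Theorem \ref{main2} (in particular the ``$\sqrt{xT}$ far from integers'' condition used when placing the contour legs $\gamma_2,\gamma_4$ in its proof) hold uniformly for every $t$ in the integration range, and check that the implicit constants in Theorem \ref{main2} depend on $\delta$ in a way that does not reintroduce a problematic power of $T$ after the $t$-integration. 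Under the standing assumptions both of these are routine, and the entire argument is a clean composition of Theorem \ref{main2} with an elementary $L^1$-estimate on $\zeta'(s)/\zeta(s)$ along the vertical line.
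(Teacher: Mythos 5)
Your proof is correct (modulo the same black-box use of Theorem \ref{main2} that the paper itself makes), but it takes a genuinely different route from the paper's. The paper never estimates the vertical-line integral directly: it closes the contour to the left (the contour of figure \ref{shekll3}), so the quantity in \eqref{firstway} is the full contour integral minus two horizontal legs at height $\pm i\sqrt{T}$. The full contour integral is then evaluated by the residue theorem, producing the pole of $1/s$ at $s=0$, the pole of $\zeta$ at $s=1$ (whose oscillating sum is handled by Theorem \ref{main1}), the nontrivial zeros $\rho$ with $|\mathrm{Im}(\rho)|<\sqrt{T}$ (each handled by Theorem \ref{main2} and then summed using $\sum_\rho 1/|\rho|\ll \log^2 T$), and the trivial zeros; the horizontal legs are bounded only after adjusting $T$ so that $\zeta'/\zeta(\sigma\pm i\sqrt{T})\ll\log^2 T$. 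You instead stay on the line $\mathrm{Re}(s)=1+\epsilon$, bound $\zeta'/\zeta$ there by a constant via absolute convergence, apply Theorem \ref{main2} pointwise with $\alpha=1+\epsilon$, $\beta=t$, and integrate in $t$, exploiting the $(|t|+1)^{-1/2}$ gain. Your route is shorter and avoids all of the zero bookkeeping (no zero counting, no special choice of $T$ to keep the horizontal legs away from zeros, no use of Theorem \ref{main1}); its only requirements are uniformity in $\beta$ of the implied constant in Theorem \ref{main2} for $|\beta|<\sqrt{T}$ (which you rightly flag, and which the paper also needs implicitly when it sums over zeros), admissibility of $\alpha=1+\epsilon$ (apply the theorem with $2\epsilon$ in place of $\epsilon$), and the standing hypothesis $x^2=O(T)$, which the paper's own argument requires as well. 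What the paper's longer computation buys is a bound of the shape $x^2\sqrt{T}\,e^{(1+\epsilon)\sqrt{2x/(2+\pi^2)}}$, which is the form actually invoked later in the proof of Theorem \ref{psiresult1} (sharper in the exponential than the stated right-hand side of \eqref{firstway}); note, however, that your direct estimate $\sqrt{T}\,e^{(1+\epsilon)(\sqrt{2/(2+\pi^2)}+\delta)\sqrt{x}}+\sqrt{T}\log T$ is of the same exponential quality, so it would serve that downstream purpose equally well.
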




\begin{proof}
We consider the contour $\gamma$ in figure \ref{shekll3}, where $\epsilon>0$ is a very small real number and $U$ is a very large real number far enough from any negative even integer $-2m$. Using the Residue Theorem
\begin{align}\label{terms}
\sum_{\ell^2<Tx}&(-1)^\ell\int_{\gamma}\frac{\zeta'(s)}{\zeta(s)}\frac{e^{s\sqrt{x-\frac{\ell^2}{T}}}}{s}ds\nonumber\\
&=2\pi i\sum_{\ell^2<Tx}(-1)^\ell\Bigg(\Big(\lim_{s\rightarrow 0}e^{s\sqrt{x-\frac{\ell^2}{T}}}\big(\frac{\zeta'(s)}{\zeta(s)}\big)\Big)+e^{\sqrt{x-\frac{\ell^2}{T}}}\nonumber\\
&\quad\quad\quad\quad\quad\quad\quad\quad\quad+\sum_{|Im(\rho_m)|<\sqrt{T}}\frac{e^{\rho_m\sqrt{x-\frac{\ell^2}{T}}}}{\rho_m}-\sum_{1\leq m\leq U/2}\frac{e^{-2m\sqrt{x-\frac{\ell^2}{T}}}}{2m}\Bigg),
\end{align}
where $\rho_m$ is the $m^{th}$ non trivial zeroes of the Rieman zeta function.


\begin{figure}[h!] 

\tikzset{every picture/.style={line width=0.75pt}} 

\begin{tikzpicture}[x=0.75pt,y=0.75pt,yscale=-1,xscale=1]

\draw [line width=1.5]  (311,1839.12) -- (382,1839.12)(318.1,1718.7) -- (318.1,1852.5) (375,1834.12) -- (382,1839.12) -- (375,1844.12) (313.1,1725.7) -- (318.1,1718.7) -- (323.1,1725.7)  ;
\draw [line width=1.5]    (167,1837.7) -- (312,1838.7) ;
\draw [line width=1.5]    (318,1841.7) -- (319,1961.7) ;
\draw  [color={rgb, 255:red, 65; green, 117; blue, 5 }  ,draw opacity=1 ][line width=1.5]  (177,1743.7) -- (367,1743.7) -- (367,1933.7) -- (177,1933.7) -- cycle ;
\draw [color={rgb, 255:red, 208; green, 2; blue, 27 }  ,draw opacity=1 ] [dash pattern={on 4.5pt off 4.5pt}]  (336,1730.7) -- (337,1949.7) ;
\draw    (367,1724.7) -- (340.86,1734.97) ;
\draw [shift={(339,1735.7)}, rotate = 338.55] [color={rgb, 255:red, 0; green, 0; blue, 0 }  ][line width=0.75]    (10.93,-3.29) .. controls (6.95,-1.4) and (3.31,-0.3) .. (0,0) .. controls (3.31,0.3) and (6.95,1.4) .. (10.93,3.29)   ;
\draw    (386,1820.7) -- (372.46,1833.34) ;
\draw [shift={(371,1834.7)}, rotate = 316.97] [color={rgb, 255:red, 0; green, 0; blue, 0 }  ][line width=0.75]    (10.93,-3.29) .. controls (6.95,-1.4) and (3.31,-0.3) .. (0,0) .. controls (3.31,0.3) and (6.95,1.4) .. (10.93,3.29)   ;
\draw    (300,1722.7) -- (311.83,1739.08) ;
\draw [shift={(313,1740.7)}, rotate = 234.16] [color={rgb, 255:red, 0; green, 0; blue, 0 }  ][line width=0.75]    (10.93,-3.29) .. controls (6.95,-1.4) and (3.31,-0.3) .. (0,0) .. controls (3.31,0.3) and (6.95,1.4) .. (10.93,3.29)   ;
\draw    (297,1942.7) -- (313.07,1938.24) ;
\draw [shift={(315,1937.7)}, rotate = 524.48] [color={rgb, 255:red, 0; green, 0; blue, 0 }  ][line width=0.75]    (10.93,-3.29) .. controls (6.95,-1.4) and (3.31,-0.3) .. (0,0) .. controls (3.31,0.3) and (6.95,1.4) .. (10.93,3.29)   ;

\draw (178,1816.7) node [anchor=north west][inner sep=0.75pt]  [font=\small] [align=left] {{\small -U}};
\draw (261,1932.7) node [anchor=north west][inner sep=0.75pt]  [font=\footnotesize] [align=left] {$\displaystyle -\sqrt{T}$};
\draw (273,1703.7) node [anchor=north west][inner sep=0.75pt]  [font=\footnotesize] [align=left] {$\displaystyle \sqrt{T}$};
\draw (379,1800.7) node [anchor=north west][inner sep=0.75pt]   [align=left] {1+$\displaystyle \epsilon $};
\draw (321,1821.7) node [anchor=north west][inner sep=0.75pt]  [font=\footnotesize] [align=left] {$\displaystyle 0.5$};
\draw (354,1702.7) node [anchor=north west][inner sep=0.75pt]   [align=left] {Critical line};

\end{tikzpicture}

\centering \caption{The contour $\gamma$} 
\label{shekll3}
\end{figure}


An easy computation shows that the first and fourth terms in the RHS sum have contribution at most $\sqrt{Tx}$. Using Theorem \ref{main1} (by tending $c\rightarrow 0$) the second term is bounded from above  by $\sqrt{Tx}e^{\epsilon\sqrt{x}}$ .
We can use Theorem \ref{main2} to show that 
$$\sum_{\ell^2<Tx}(-1)^\ell e^{\rho_m\sqrt{x-\frac{\ell^2}{T}}}\ll \sqrt{\frac{Tx}{|\text{Im}(\rho_m)|+1}}e^{(\frac{1}{\sqrt{1+\pi^2}}+\epsilon)\sqrt{x}}.$$ Finally, using the fact that 
$$\sum_{|\text{Im}(\rho_m)|<\sqrt{T}}\frac{1}{\text{Im}(\rho_m)^{\frac{3}{2}}}$$ 
converges, we can conclude that the third term of RHS of \eqref{terms} has contribution at most $\sqrt{Tx}e^{(\frac{1}{\sqrt{1+\pi^2}}+\epsilon)\sqrt{x}}$. So we have
\begin{align}\label{residueee}
\sum_{\ell^2<Tx}&(-1)^\ell\int_{\gamma}\frac{\zeta'(s)}{\zeta(s)}\frac{e^{s\sqrt{x-\frac{\ell^2}{T}}}}{s}ds\ll \sqrt{Tx}e^{(\frac{1}{\sqrt{1+\pi^2}}+\epsilon)\sqrt{x}}.
\end{align}
As $\frac{e^{s\sqrt{x-\frac{\ell^2}{T}}}}{s}$ tends to zero for Re$(s)\rightarrow -\infty$, we can pick $U$ large enough to have 
\begin{align}\label{taghsim}
\sum_{\ell^2<Tx}(-1)^\ell\int_{\gamma}\frac{\zeta'(s)}{\zeta(s)}\frac{e^{s\sqrt{x-\frac{\ell^2}{T}}}}{s}ds \simeq &\sum_{\ell^2<Tx}(-1)^\ell\int_{1+\epsilon-i\sqrt{T}}^{1+\epsilon+i\sqrt{T}}\frac{\zeta'(s)}{\zeta(s)}\frac{e^{s\sqrt{x-\frac{\ell^2}{T}}}}{s}ds\nonumber\\
&+\sum_{\ell^2<Tx}(-1)^\ell\int_{-U-i\sqrt{T}}^{1+\epsilon-i\sqrt{T}}\frac{\zeta'(s)}{\zeta(s)}\frac{e^{s\sqrt{x-\frac{\ell^2}{T}}}}{s}ds\nonumber\\
&+\sum_{\ell^2<Tx}(-1)^\ell\int_{1+\epsilon+i\sqrt{T}}^{-U+i\sqrt{T}}\frac{\zeta'(s)}{\zeta(s)}\frac{e^{s\sqrt{x-\frac{\ell^2}{T}}}}{s}ds.
\end{align}
The second integral in the RHS is
\begin{align*}
\Bigg\vert\sum_{\ell^2<Tx} &(-1)^\ell \int_{-U-i\sqrt{T}}^{1+\epsilon-i\sqrt{T}}\frac{\zeta'(s)}{\zeta(s)}\frac{e^{s\sqrt{x-\frac{\ell^2}{T}}}}{s}ds\Bigg\vert\nonumber\\
&=\left\vert\sum_{\ell^2<Tx}(-1)^\ell\int_{-U-i\sqrt{T}}^{-i\sqrt{T}}\frac{\zeta'(s)}{\zeta(s)}\frac{e^{s\sqrt{x-\frac{\ell^2}{T}}}}{s}ds+\int_{-i\sqrt{T}}^{1+\epsilon-i\sqrt{T}}\frac{\zeta'(s)}{\zeta(s)}\frac{\sum_{\ell^2<Tx}(-1)^\ell e^{s\sqrt{x-\frac{\ell^2}{T}}}}{s}ds\right\vert\nonumber\\
&\ll \sqrt{Tx}\int_{-\infty}^{0}\left\vert\frac{\zeta'(\sigma-i\sqrt{T})}{\zeta(\sigma-i\sqrt{T})}\right\vert\frac{e^{\sigma\sqrt{x}}}{\sqrt{T}}d\sigma+\frac{1}{\sqrt{T}}  \int_{0}^{1+\epsilon}\left\vert\frac{\zeta'(s)}{\zeta(s)}\right\vert \sqrt[4]{Tx^2}e^{\sigma(\frac{1}{\sqrt{1+\pi^2}}+\epsilon)\sqrt{x}}d\sigma.
\end{align*}
Note that in the last inequality we used Theorem \ref{main2}. We can use the fact that $\frac{\zeta'}{\zeta}(\sigma+it)=\sum_{\rho}\frac{1}{\sigma+it-\rho}+O(\log(t))$ to choose a proper $T$ such that $\frac{\zeta'}{\zeta}(\sigma\pm i\sqrt{T})\ll \log^2(T)$ for $-\infty\leq \sigma<1+\epsilon$. So we have
\begin{align*}  
\sum_{\ell^2<Tx} &(-1)^\ell\int_{-U-i\sqrt{T}}^{1+\epsilon-i\sqrt{T}}\frac{\zeta'(s)}{\zeta(s)}\frac{e^{s\sqrt{x-\frac{\ell^2}{T}}}}{s}ds\nonumber\\
&\ll \sqrt{x}\log^2(T)+\frac{\sqrt{x}}{\sqrt[4]{T}}\int_{0}^{1+\epsilon}\left\vert\frac{\zeta'(\sigma-i\sqrt{T})}{\zeta(\sigma-i\sqrt{T})}\right\vert e^{\sigma(\frac{1}{\sqrt{1+\pi^2}}+\epsilon)\sqrt{x}}d\sigma
\nonumber\\
&\ll \frac{\log^2(T)}{\sqrt[4]{T}}e^{(1+\epsilon)\sqrt{\frac{x}{1+\pi^2}}}.
\end{align*}
The third integral can be similarly bounded. This, \eqref{residueee}, and \eqref{taghsim} give the result.
\end{proof}


\bigskip


\textit{Proof of Theorem \ref{psiresult1}. }
We compute \eqref{equu} another way. We have
\begin{align}\label{secondway}
\sum_{\ell^2<Tx}(-1)^\ell &\int_{1+\epsilon-i\sqrt{T}}^{1+\epsilon+i\sqrt{T}}\frac{\zeta'(s)}{\zeta(s)}\frac{e^{s\sqrt{x-\frac{\ell^2}{T}}}}{s}ds=\sum_{\ell^2<Tx}(-1)^\ell\int_{1+\epsilon-i\sqrt{T}}^{1+\epsilon+i\sqrt{T}}\sum_{n=1}^{\infty}\frac{\Lambda(n)}{n^s}\frac{e^{s\sqrt{x-\frac{\ell^2}{T}}}}{s}ds\nonumber\\
=&\sum_{\ell^2<Tx}(-1)^\ell\left(\sum_{1\leq n\leq e^{\sqrt{x-\frac{\ell^2}{T}}}}+\sum_{e^{\sqrt{x-\frac{\ell^2}{T}}}\leq n}\int_{1+\epsilon-i\sqrt{T}}^{1+\epsilon+i\sqrt{T}}\frac{\Lambda(n)}{n^s}\frac{e^{s\sqrt{x-\frac{\ell^2}{T}}}}{s}ds\right):=A_1+A_2
\end{align}
First we compute $A_1$.
Again, we use the contour $\gamma$ in figure \ref{shekll3} to compute the integral. Knowing $\left\vert\frac{e^{\sqrt{x-\frac{l^2}{T}}}}{n}\right\vert>1$, we conclude that the integrand is tending to zero as Re$(s)\rightarrow -\infty$. Considering sufficiently large  $U$ and Using the Residue Theorem give 
\begin{align}\label{moaadelle}
A_1\simeq &\sum_{\ell^2<Tx}(-1)^\ell\sum_{1\leq n\leq e^{\sqrt{x-\frac{\ell^2}{T}}}}\Lambda(n)\int_{\gamma}\left(\frac{e^{\sqrt{x-\frac{\ell^2}{T}}}}{n}\right)^s\frac{ds}{s}\nonumber\\
&+\sum_{\ell^2<Tx}(-1)^\ell\sum_{1\leq n\leq e^{\sqrt{x-\frac{\ell^2}{T}}}}\Lambda(n)\int_{-U+i\sqrt{T}}^{1+\epsilon+i\sqrt{T}}\left(\frac{e^{\sqrt{x-\frac{\ell^2}{T}}}}{n}\right)^s\frac{ds}{s}\nonumber\\
&-\sum_{\ell^2<Tx}(-1)^\ell\sum_{1\leq n\leq e^{\sqrt{x-\frac{\ell^2}{T}}}}\Lambda(n)\int_{-U-i\sqrt{T}}^{1+\epsilon-i\sqrt{T}}\left(\frac{e^{\sqrt{x-\frac{\ell^2}{T}}}}{n}\right)^s\frac{ds}{s}\nonumber\\
=&2\pi i\sum_{\ell^2<Tx}(-1)^\ell \Psi\left(e^{\sqrt{x-\frac{l^2}{T}}}\right)
+ \sum_{\ell^2<Tx}(-1)^\ell \sum_{n\leq e^{\sqrt{x-\frac{\ell^2}{T}}}}\Lambda(n)\int_{-U+ i\sqrt{T}}^{1+\epsilon+ i\sqrt{T}}\left(\frac{e^{\sqrt{x-\frac{\ell^2}{T}}}}{n}\right)^s\frac{ds}{s}\nonumber\\
&\quad\quad\quad\quad\quad\quad\quad -\sum_{\ell^2<Tx}(-1)^\ell  \sum_{n\leq e^{\sqrt{x-\frac{\ell^2}{T}}}}\Lambda(n)\int_{-U- i\sqrt{T}}^{1+\epsilon- i\sqrt{T}}\left(\frac{e^{\sqrt{x-\frac{\ell^2}{T}}}}{n}\right)^s\frac{ds}{s}.
\end{align}
We  bound the integrals in RHS. Define 
\begin{align*}
    y_n(V):=\Lambda(n) \int_{-U\pm iV}^{1+\epsilon\pm iV}\sum_{\ell^2<Tx-T\log^2(n)}(-1)^\ell\left(\frac{e^{\sqrt{x-\frac{\ell^2}{T}}}}{n}\right)^s\frac{ds}{s}
\end{align*}
Inspired by  \cite{Montgomery} there exists  $\sqrt{T}<V<2\sqrt{T}$ such that
\begin{align}\label{longmont}
\sum_{n\leq e^{\sqrt{x}}}&|y_n(V)|= \sum_{n\leq e^{\sqrt{x}}}\left(\overline{y_n(V)}y_n(V)\right)^{1/2}\nonumber\\
&=\sum_{n\leq e^{\sqrt{x}}}\left(\Lambda(n)^2\left\vert \int_{-U\pm iV}^{1+\epsilon\pm iV}\sum_{\ell^2<Tx-T\log^2(n)}(-1)^\ell\left(\frac{e^{\sqrt{x-\frac{\ell^2}{T}}}}{n}\right)^s\frac{ds}{s}\right\vert^2\right)^{1/2} \nonumber\\
&\ll \sum_{n\leq e^{\sqrt{x}}}\Lambda(n)\left(\frac{1}{\sqrt{T}}\int_{\sqrt{T}}^{2\sqrt{T}}\int_{-U}^{1+\epsilon}\left\vert\sum_{\ell^2<Tx-T\log^2(n)}(-1)^\ell\left(\frac{e^{\sqrt{x-\frac{\ell^2}{T}}}}{n}\right)^{\sigma+it}\right\vert^2\frac{d\sigma}{\sigma^2+t^2} dt\right)^{1/2}\nonumber\\
&\ll \sum_{n\leq e^{\sqrt{x}}}\frac{\Lambda(n)}{\sqrt[4]{T}}\left(\int_{-U}^{1+\epsilon}\sum_{\ell_1^2<\ell_2^2<T(x-\log^2(n))}\frac{e^{\sigma(\sqrt{x-\frac{\ell_1^2}{T}}+\sqrt{x-\frac{\ell_2^2}{T}})}}{n^{2\sigma}}\left\vert\int_{\sqrt{T}}^{2\sqrt{T}}e^{it(\sqrt{x-\frac{\ell_1^2}{T}}-\sqrt{x-\frac{\ell_2^2}{T}})}\frac{dt}{\sigma^2+t^2} \right\vert d\sigma\right)^{\frac{1}{2}}\nonumber\\
&\quad\quad+\sum_{n\leq e^{\sqrt{x}}}\frac{\Lambda(n)}{\sqrt[4]{T}}\left(\sum_{\ell^2<T(x-\log^2(n))}\int_{-U}^{1+\epsilon}\frac{e^{2\sigma\sqrt{x-\frac{\ell^2}{T}}}}{n^{2\sigma}}\int_{\sqrt{T}}^{2\sqrt{T}}\frac{dt}{\sigma^2+t^2} d\sigma\right)^{\frac{1}{2}}.
\end{align}
We use Lemma \ref{vandercorput} for $G(t):=\frac{1}{\sigma^2+t^2}$ and $F(t):=t(\sqrt{x-\frac{\ell_1^2}{T}}-\sqrt{x-\frac{\ell_2^2}{T}})$ (i.e.  $F'(t)\geq \frac{\ell_2^2-\ell_1^2}{2T\sqrt{x}}$) for the off-diagonal terms in the last expression of RHS in \eqref{longmont}. Note that we could get the same result without using the lemma, but this way is more straightforward. Then 
\begin{align*}
\sum_{n\leq e^{\sqrt{x}}}|y_n(V)|&\ll \frac{\sqrt[4]{x}}{\sqrt[4]{T}}\sum_{n\leq e^{\sqrt{x}}}\Lambda(n)\left(\sum_{\ell_1^2<\ell_2^2<T(x-\log^2(n))}\int_{-U}^{1+\epsilon}\frac{e^{\sigma(\sqrt{x-\frac{\ell_1^2}{T}}+\sqrt{x-\frac{\ell_2^2}{T}})}}{n^{2\sigma}(\ell_2^2-\ell_1^2)} d\sigma\right)^{\frac{1}{2}}\nonumber\\
&+\frac{1}{\sqrt{T}}\sum_{n\leq e^{\sqrt{x}}}\Lambda(n)\left(\sum_{\ell^2<T(x-\log^2(n))}\frac{e^{2(1+\epsilon)\sqrt{x-\frac{\ell^2}{T}}}}{n^{2(1+\epsilon)}}\right)^{\frac{1}{2}}\nonumber\\
&\ll \left(\sum_{m<\sqrt{Tx}}\frac{\tau(m)}{m} \right)^{\frac{1}{2}}\frac{\sqrt[4]{x}e^{(1+\epsilon)\sqrt{x}}}{\sqrt[4]{T}}\sum_{n\leq e^{\sqrt{x}}}\frac{\Lambda(n)}{n^{1+\epsilon}}+\frac{\sqrt[4]{x}e^{(1+\epsilon)\sqrt{x}}}{\sqrt[4]{T}}\sum_{n\leq e^{\sqrt{x}}}\frac{\Lambda(n)}{n^{1+\epsilon}}
\end{align*}
where $\tau(m)$ is the number of divisors of $m$. So there exists  $\sqrt{T}< V<2\sqrt{T}$ we have
\begin{align*}
\sum_{n\leq e^{\sqrt{x}}}\Lambda(n)&\int_{-U\pm iV}^{1+\epsilon\pm iV}\sum_{\ell^2<T(x-\log^2(n))}(-1)^\ell\left(\frac{e^{\sqrt{x-\frac{\ell^2}{T}}}}{n}\right)^s\frac{ds}{s}\ll \sum_{n\leq e^{\sqrt{x}}}|y_n(V)|
\ll \frac{x^{\frac{1}{4}}e^{(1+\epsilon)\sqrt{x}}\log T}{\sqrt[4]{T}}.
\end{align*}
This and \eqref{moaadelle} imply that 
\begin{align}\label{a1}
A_1=2\pi i\sum_{l^2<Tx}(-1)^l \Psi\left(e^{\sqrt{x-\frac{l^2}{T}}}\right)+O\left(\frac{x^{\frac{1}{4}}e^{(1+\epsilon)\sqrt{x}}}{\sqrt[4]{T}}\right).
\end{align}


\begin{figure}[h!] 

\tikzset{every picture/.style={line width=0.75pt}} 

\begin{tikzpicture}[x=0.75pt,y=0.75pt,yscale=-1,xscale=1]

\draw [line width=1.5]  (282.6,2199.7) -- (541,2199.7)(308.44,2079.28) -- (308.44,2213.08) (534,2194.7) -- (541,2199.7) -- (534,2204.7) (303.44,2086.28) -- (308.44,2079.28) -- (313.44,2086.28)  ;
\draw [line width=1.5]    (263,2199.7) -- (303,2199.7) ;
\draw [line width=1.5]    (309,2199.7) -- (310,2319.7) ;
\draw  [color={rgb, 255:red, 65; green, 117; blue, 5 }  ,draw opacity=1 ][line width=1.5]  (349,2102.7) -- (507,2102.7) -- (507,2292.7) -- (349,2292.7) -- cycle ;
\draw [color={rgb, 255:red, 208; green, 2; blue, 27 }  ,draw opacity=1 ] [dash pattern={on 4.5pt off 4.5pt}]  (322.9,2087.62) -- (323.9,2306.62) ;
\draw    (358,2082.7) -- (331.86,2092.97) ;
\draw [shift={(330,2093.7)}, rotate = 338.55] [color={rgb, 255:red, 0; green, 0; blue, 0 }  ][line width=0.75]    (10.93,-3.29) .. controls (6.95,-1.4) and (3.31,-0.3) .. (0,0) .. controls (3.31,0.3) and (6.95,1.4) .. (10.93,3.29)   ;
\draw    (367,2183.7) -- (353.46,2196.34) ;
\draw [shift={(352,2197.7)}, rotate = 316.97] [color={rgb, 255:red, 0; green, 0; blue, 0 }  ][line width=0.75]    (10.93,-3.29) .. controls (6.95,-1.4) and (3.31,-0.3) .. (0,0) .. controls (3.31,0.3) and (6.95,1.4) .. (10.93,3.29)   ;
\draw    (291,2081.7) -- (302.83,2098.08) ;
\draw [shift={(304,2099.7)}, rotate = 234.16] [color={rgb, 255:red, 0; green, 0; blue, 0 }  ][line width=0.75]    (10.93,-3.29) .. controls (6.95,-1.4) and (3.31,-0.3) .. (0,0) .. controls (3.31,0.3) and (6.95,1.4) .. (10.93,3.29)   ;
\draw    (288,2297.7) -- (304.07,2293.24) ;
\draw [shift={(306,2292.7)}, rotate = 524.48] [color={rgb, 255:red, 0; green, 0; blue, 0 }  ][line width=0.75]    (10.93,-3.29) .. controls (6.95,-1.4) and (3.31,-0.3) .. (0,0) .. controls (3.31,0.3) and (6.95,1.4) .. (10.93,3.29)   ;
\draw [color={rgb, 255:red, 65; green, 117; blue, 5 }  ,draw opacity=1 ]   (442,2102.7) -- (479,2102.7) ;
\draw [shift={(481,2102.7)}, rotate = 180] [color={rgb, 255:red, 65; green, 117; blue, 5 }  ,draw opacity=1 ][line width=0.75]    (10.93,-3.29) .. controls (6.95,-1.4) and (3.31,-0.3) .. (0,0) .. controls (3.31,0.3) and (6.95,1.4) .. (10.93,3.29)   ;

\draw (456,2077.7) node [anchor=north west][inner sep=0.75pt]   [align=left] {$\displaystyle \gamma '$};
\draw (505,2182.7) node [anchor=north west][inner sep=0.75pt]  [font=\small] [align=left] {{\small U}};
\draw (252,2288.7) node [anchor=north west][inner sep=0.75pt]  [font=\footnotesize] [align=left] {$\displaystyle -\sqrt{T}$};
\draw (264,2063.7) node [anchor=north west][inner sep=0.75pt]  [font=\footnotesize] [align=left] {$\displaystyle \sqrt{T}$};
\draw (360,2165.7) node [anchor=north west][inner sep=0.75pt]   [align=left] {1+$\displaystyle \epsilon $};
\draw (312,2179.7) node [anchor=north west][inner sep=0.75pt]  [font=\footnotesize] [align=left] {$\displaystyle 0.5$};
\draw (345,2060.7) node [anchor=north west][inner sep=0.75pt]   [align=left] {Critical line};

\end{tikzpicture}

\centering \caption{The contour $\gamma'$} 
\label{Shekl4}
\end{figure}


Next we compute $A_2$. We consider contour $\gamma'$ in Figure \ref{Shekl4}. As $\frac{e^{s\sqrt{x-\frac{l^2}{T}}}}{sn^s}$ does not have poles inside $\gamma'$, choosing large enough $U$ and  using the Cauchy's integral theorem give
\begin{align*}
A_2\simeq \sum_{\ell^2<Tx}(-1)^l\left(\sum_{e^{\sqrt{x-\frac{\ell^2}{T}}}\leq n}\int_{1+\epsilon+ i\sqrt{T}}^{U+ i\sqrt{T}}\frac{\Lambda(n)}{n^s}\frac{e^{s\sqrt{x-\frac{\ell^2}{T}}}}{s}ds-\sum_{e^{\sqrt{x-\frac{\ell^2}{T}}}\leq n}\int_{1+\epsilon- i\sqrt{T}}^{U- i\sqrt{T}}\frac{\Lambda(n)}{n^s}\frac{e^{s\sqrt{x-\frac{\ell^2}{T}}}}{s}ds\right).
\end{align*}
Similar to $y_n$, we define $z_n$ as follows: 
\begin{align*}
    z_n(V) := \Lambda(n)\sum_{T(x-\log^2(n))<\ell^2<Tx}(-1)^\ell \int_{1+\epsilon\pm iV}^{U\pm iV}\frac{e^{s\sqrt{x-\frac{\ell^2}{T}}}}{sn^s}ds
\end{align*}
 In this case, we will have
\begin{align}
\sum_{n}|z_n(V)|
&=\sum_{n<e^{\sqrt{x}}}\Lambda(n)\left\vert\sum_{T(x-\log^2(n))<\ell^2<Tx}(-1)^\ell \int_{1+\epsilon\pm iV}^{U\pm iV}\frac{e^{s\sqrt{x-\frac{\ell^2}{T}}}}{sn^s}ds\right\vert\nonumber\\
&+\sum_{e^{\sqrt{x}}<n}\Lambda(n)\left\vert\sum_{\ell^2<Tx}(-1)^\ell \int_{1+\epsilon\pm iV}^{U\pm iV}\frac{e^{s\sqrt{x-\frac{\ell^2}{T}}}}{sn^s}ds\right\vert.
\end{align}
As these cases are similar, we only compute the bound for the case $e^{\sqrt{x}}<n$. There exists $\sqrt{T}<V<2\sqrt{T}$ such that
\begin{align}
\sum_{e^{\sqrt{x}}<n}&|z_n(V)|\ll \frac{1}{\sqrt[4]{T}}\sum_{e^{\sqrt{x}}<n}\Lambda(n)\left(\int_{\sqrt{T}}^{2\sqrt{T}}\int_{1+\epsilon}^{U}\left\vert \sum_{\ell^2<Tx}(-1)^\ell e^{s\sqrt{x-\frac{\ell^2}{T}}}\right\vert^2 \frac{d\sigma}{n^{2\sigma}(\sigma^2+t^2)} dt\right)^{\frac{1}{2}}\nonumber\\
&\ll \frac{1}{\sqrt[4]{T}}\sum_{e^{\sqrt{x}}<n}\Lambda(n)\left(\int_{1+\epsilon}^{U}\frac{1}{n^{2\sigma}} \sum_{\ell_1^2<\ell_2^2<Tx} e^{\sigma(\sqrt{x-\frac{\ell_1^2}{T}}+\sqrt{x-\frac{\ell_2^2}{T}})}\int_{\sqrt{T}}^{2\sqrt{T}}\frac{e^{it(\sqrt{x-\frac{\ell_1^2}{T}}-\sqrt{x-\frac{\ell_2^2}{T}})}}{(\sigma^2+t^2)} dtd\sigma\right)^{\frac{1}{2}}\nonumber\\
&+ \frac{1}{\sqrt[4]{T}}\sum_{e^{\sqrt{x}}<n}\Lambda(n)\left(\int_{\sqrt{T}}^{2\sqrt{T}}\int_{1+\epsilon}^{U}\left\vert \sum_{\ell^2<Tx}(-1)^\ell e^{s\sqrt{x-\frac{\ell^2}{T}}}\right\vert^2 \frac{d\sigma}{n^{2\sigma}(\sigma^2+t^2)} dt\right)^{\frac{1}{2}}\nonumber\\
&\ll \frac{1}{\sqrt[4]{T}}\sum_{e^{\sqrt{x}}<n}\Lambda(n)\left(\int_{1+\epsilon}^{U}\frac{1}{n^{2\sigma}} \sum_{\ell_1^2<\ell_2^2<Tx} e^{\sigma(\sqrt{x-\frac{\ell_1^2}{T}}+\sqrt{x-\frac{\ell_2^2}{T}})}\int_{\sqrt{T}}^{2\sqrt{T}}\frac{e^{it(\sqrt{x-\frac{\ell_1^2}{T}}-\sqrt{x-\frac{\ell_2^2}{T}})}}{(\sigma^2+t^2)} dtd\sigma\right)^{\frac{1}{2}}\nonumber\\
&\quad\quad\quad\quad\quad\quad+\frac{1}{\sqrt[4]{T}}\sum_{e^{\sqrt{x}}<n}\Lambda(n)\left(\int_{\sqrt{T}}^{2\sqrt{T}}\sum_{\ell^2<Tx}\int_{1+\epsilon}^{U}\frac{e^{2\sigma\sqrt{x-\frac{\ell^2}{T}}}}{n^{2\sigma}}  \frac{d\sigma}{(\sigma^2+t^2)} dt\right)^{\frac{1}{2}}.
\end{align}
Let $F(t)=t(\sqrt{x-\frac{\ell_1^2}{T}}-\sqrt{x-\frac{\ell_2^2}{T}})$ and $G(t)=\frac{1}{(\sigma^2+t^2)}$. Then we conclude that $|F'(t)|\gg \frac{\ell_2^2-\ell_1^2}{T\sqrt{x}}$ and $|G(t)|\ll \frac{1}{(\sigma^2+T)}$.
Using lemma \ref{vandercorput}
\begin{align*}
\sum_{e^{\sqrt{x}}<n}|z_n(V)| \ll \frac{\sqrt[4]{x}}{\sqrt[4]{T}}&\sum_{e^{\sqrt{x}}<n}\Lambda(n)\left(\sum_{\ell_1^2<\ell_2^2<Tx}\frac{1}{\ell_2^2-\ell_1^2}  \int_{1+\epsilon}^{U}\frac{e^{\sigma(\sqrt{x-\frac{\ell_1^2}{T}}+\sqrt{x-\frac{\ell_2^2}{T}})}}{n^{2\sigma}}d\sigma\right)^{\frac{1}{2}}\nonumber\\
&+\frac{\sqrt[4]{x}}{\sqrt[4]{T}}\sum_{e^{\sqrt{x}}<n}\Lambda(n)\frac{e^{(1+\epsilon)\sqrt{x}}}{n^{1+\epsilon}}\nonumber\\
\ll\frac{\sqrt[4]{x}}{\sqrt[4]{T}}&\sum_{e^{\sqrt{x}}<n}\Lambda(n)\frac{e^{(1+\epsilon)\sqrt{x}}}{n^{1+\epsilon}}\left(\sum_{m<Tx}\frac{\tau(m)}{m} \right)^{\frac{1}{2}}\nonumber\\
&+\frac{\sqrt[4]{x}}{\sqrt[4]{T}}\sum_{e^{\sqrt{x}}<n}\Lambda(n)\frac{e^{(1+\epsilon)\sqrt{x}}}{n^{1+\epsilon}}\ll  \frac{\sqrt[4]{x}\log(T)e^{(1+\epsilon)\sqrt{x}}}{\sqrt[4]{T}}.
\end{align*}
So there exists $\sqrt{T}<V<2\sqrt{T}$ such that 
\begin{align}\label{a2}
A_2\ll\sum_{e^{\sqrt{x}}\leq n}|z_n(V)|\ll \frac{\sqrt[4]{x}\log(T)e^{(1+\epsilon)\sqrt{x}}}{\sqrt[4]{T}}.
\end{align}
Putting  \eqref{a1} and \eqref{a2} into \eqref{secondway} and comparing it with \eqref{firstway} gives
\begin{align*}
\sum_{\ell^2<Tx}(-1)^l \Psi\left(e^{\sqrt{x-\frac{\ell^2}{T}}}\right)\ll \sqrt{Tx}e^{(1+\epsilon)\sqrt{\frac{x}{1+\pi^2}}}+\frac{x^{\frac{3}{4}}}{\sqrt[4]{T}}e^{(1+\epsilon)\sqrt{x}}.
\end{align*}
Taking $T=e^{\frac{4(1+\epsilon)}{3}\sqrt{x}(1-\sqrt{\frac{1}{1+\pi^2}})}$ gives the desired result.
\qed


\bigskip


\section{Proof related to the pentagonal number theorem. }
We start this section by proving the weak pentagonal number theorem for truncation of the usual partition function.

\bigskip

We start with the proof of proposition \ref{pentcorollary}.


\begin{proof}
For \eqref{nonsensitive} we only need to put $c=\pi\sqrt{\frac{2}{3}}$, $a=\frac{3}{2}$, $b=-\frac{1}{2}$, $d=0$ in theorem \ref{main1}; for equation \eqref{pentuu}, pick $c=\frac{\pi}{\sqrt{6}}$, $a=\frac{3}{2}$, $b=-\frac{1}{2}$, $d=0$ and use Theorem \ref{main1}; and for equation \eqref{thirdfourthestimate} we need to pick $c=\frac{\pi}{\sqrt{6}}$ and $a=1$, and $b=d=0$. 

We prove equation \eqref{sensitive}.
Let $f(z)=\pi\sqrt{\frac{24(x-\frac{z(3z-1)}{2})-1}{36}}$ and $b^2=\frac{1}{12}$. 
We choose the branch cut $(-\infty,\alpha_1]\cup [\alpha_2,\infty)$. Then let $G$ be the interior of the square with vertices (see figure \ref{shekl})
 \begin{align}
\pm\sqrt{\frac{2x}{3}}\mp 1\pm ib\sqrt{x},
\end{align}
Define 
\begin{align}
h(z) :=\frac{e^{ f(z)}}{\sin^3(\pi z)} =\frac{e^{\pi \sqrt{\frac{24(x-\frac{z(3z-1)}{2})-1}{36}}}}{\sin^3(\pi z)}.
\end{align}
Using the residue theorem 
\begin{align}\label{pent2}
\int_{\gamma}&h(z)dz=2\pi i \cdot \sum_{z_i:\text{ poles }}\text{Res}(h(z_i))
\end{align}
We compute the residues of $h(z)$. We know that for $z$ near to $\ell\in \mathbb{Z}$ we have
\begin{align*}
&\frac{1}{(\sin(\pi z))^3} = \frac{(-1)^{\ell}}{\pi^3(z-\ell)^3} +\frac{(-1)^{\ell}}{2\pi (z-\ell)}+\cdots\nonumber\\
&e^{f(z)} = e^{f(\ell)} + f'(\ell)e^{f(\ell)}(z-\ell) + \frac{(f'(\ell))^2+f''(\ell)}{2}e^{f(\ell)}(z-\ell)^2+\cdots
\end{align*}
Also 
\begin{align}
    \left( f(z)\right)' = \pi (1-6z)\left(24(x-\frac{z(3z-1)}{2})-1\right)^{-1/2}\nonumber\\
    \left( f(z)\right)'' = -144\pi x \left(24(x-\frac{z(3z-1)}{2})-1\right)^{-3/2}
\end{align}
So we have
\begin{align*}
    \text{Res } h(z)|_{z=\ell} &= (-1)^{\ell}e^{ f(\ell)}\left(\frac{1}{2\pi}+\frac{(f'(\ell))^2+f''(\ell)}{2\pi^3}\right)\nonumber\\
    &= \frac{(-1)^{\ell}e^{ f(\ell)}}{2\pi(24(x-G_{\ell})-1)} \left(24(x-G_{\ell})-1+(1-6\ell)^2-\frac{144x}{\pi \sqrt{24(x-G_{\ell})-1}}\right)\nonumber\\
    &=\frac{(-1)^{\ell}e^{ f(\ell)}}{2\pi(24(x-G_{\ell})-1)} \left(24x-\frac{144x}{\pi \sqrt{24(x-G_{\ell})-1}}\right)
\end{align*}
It implies that
\begin{align}
\int_{\gamma}h(z)dz&=24ix\sum_{G_l<x}(-1)^l\frac{e^{\frac{\pi}{6}\sqrt{24(x-G_l)-1}}}{(24(x-G_l)-1)}\left(1-\frac{6}{\pi\sqrt{24(x-G_l)-1}}\right)\nonumber\\
&=\frac{24ix}{\sqrt{12}}\sum_{G_l<x}(-1)^lp_2(x-G_l).
\end{align}
We bound the integral. First assume that we choose $z\in \gamma_1\cup\gamma_3$. So $z=t\pm ib\sqrt{x}$ for $-\sqrt{\frac{2x}{3}}+1<t<\sqrt{\frac{2x}{3}}-1$. For large enough $x$ we have
\begin{align*}
\frac{24(x-\frac{z(3z-1)}{2})-1}{36}
\sim \frac{2}{3}x-t^2+b^2x\mp 2ibt\sqrt{x}.
\end{align*}
Similar to the proof of theorem \ref{main1} for $z\in \gamma_1,\gamma_3$
\begin{align*}
 e^{\pi\sqrt{\frac{24\left(x-\frac{z(3z-1)}{2}\right)-1}{36}}}\leq e^{\pi\sqrt{\frac{2x}{3}+b^2x}}.
\end{align*}
Also $|\sin^3(\pi z)| \sim \frac{1}{8}e^{3\pi b\sqrt{x}}$, and considering $b^2=\frac{1}{12}$ we get that
\begin{align}\label{bavali}
\pi(\sqrt{\frac{2}{3}+b^2}-3b) = 0.
\end{align}
So the contribution of the horizontal legs is at most $o(x)$. Now we compute the case $z\in \gamma_2,\gamma_4$. We have $z=\pm\sqrt{\frac{2}{3}x}\mp 1+it$ and $-b\sqrt{x}<t<b\sqrt{x}$. We have
\begin{align*}
\frac{24(x-\frac{z(3z-1)}{2})-1}{36}
=\frac{2}{3}x-\frac{2}{3}\times\frac{2x\mp 2it\sqrt{6x}-3t^2\mp 7it -(2\sqrt{6}+\sqrt{\frac{2}{3}})\sqrt{x}+4}{2}-\frac{2}{3} \nonumber\\
\end{align*}
If $t=o(\sqrt{x})$, then $\frac{24(x-\frac{z(3z-1)}{2})-1}{36}=o(x)$. Otherwise, since $\sqrt{x},t$ are negligible in comparison to $x,t^2$
\begin{align*}
\frac{24(x-\frac{z(3z-1)}{2})-1}{36}\sim t^2\mp it\sqrt{\frac{8x}{3}}.
\end{align*} 
Using lemma \ref{l:sqrt} we have
\begin{align}\label{longu}
Re\left(\sqrt{\frac{24(x-\frac{z(3z-1)}{2})-1}{36}}\right)
&\leq \sqrt{\frac{t}{2\sqrt{3}}\left(\sqrt{3t^2+8x}+t\sqrt{3}\right)}
\end{align}
Hence in any case we get
\begin{align}\label{rightgamma}
\left\vert \frac{e^{\pi \sqrt{\frac{24(x-\frac{z(3z-1)}{2})-1}{36}}}}{(\sin (\pi z))^3}\right\vert\ll e^{\pi \sqrt{\frac{t}{2\sqrt{3}}\left(\sqrt{3t^2+8}+t\sqrt{3}\right)}-3\pi t}.
\end{align}
Maximizing for $t$ in the RHS of \eqref{rightgamma}, we get that the integral in the LHS of \eqref{pent2} can be at most $e^{0.21\sqrt{x}}$. 
It completes the proof.
\end{proof}


\begin{remark}
Perhaps the upper bound in equation  \eqref{sensitive} becomes smaller by taking higher powers of $\sin(\pi z)$. However, this argument needs a new nontrivial input to construct the proof of the Pentagonal Number Theorem. In other word, one may take the function
\begin{align}
    H(z) := \sum_{k\in \mathbb{Z}}\sum_{\substack{0\leq h<k\\(h,k)=1}} \frac{\sinh\left(\frac{\pi}{6}\sqrt{24(x-\frac{z(3z-1)}{2})-1}\right)y(x,z,h,k)}{(\sin(\pi z))^{2r+1}}
\end{align}
for suitable function $y(x,z,h,k)$ and integer $r$ to generate all of the terms in the Ramanujan-Hardy-Rademacher formula \eqref{RHR} for the partition function. One may use this possible nontrivial observation to  prove that all of the error terms from different $k,h$ will cancel each other, which is equivalent to the Pentagonal Number Theorem.
\end{remark}

\bigskip


\textit{Proof of Proposition \ref{pack}. } We have
\begin{align*}
\sum_{\ell^2<x}&(-1)^{\ell} p_3(x-\ell^2)=\sqrt{6}e^{\pi i x}\sum_{\ell^2<x}(\frac{1}{24(x-\ell^2)-1}-\frac{12}{\pi(24(x-\ell^2)-1)^{\frac{3}{2}}})e^{\frac{\pi}{12}\sqrt{24(x-\ell^2)-1}}\nonumber\\
&=\sqrt{6}e^{\frac{\pi}{12}\sqrt{24x-1}+\pi i x}\sum_{\ell^2<\frac{x}{4}}e^{\frac{-2\pi \ell^2}{\sqrt{24x-1}(\sqrt{1-\frac{24\ell^2}{24x-1}}+1)}}(\frac{1}{24(x-\ell^2)-1}+O(\frac{1}{\sqrt{x^3}}))\\
&\sim\ \frac{e^{\frac{\pi}{12}\sqrt{24x - 1} + \pi i x}}{4x \sqrt{6}} 
\sum_{\ell^2 < \sqrt{x} \ln x} e^{-\ell^2/2\sigma^2},
\end{align*}
where 
$$
\sigma^2\ =\ {\sqrt{6x} \over \pi}.
$$
The last expression in the above can be approximated as follows
\begin{eqnarray*}
\frac{1}{4 x \sqrt{6}}e^{\frac{\pi}{12}\sqrt{24x - 1} + \pi i x} 
\sum_{\ell^2 < \sqrt{x} \ln x} e^{-\frac{\ell^2}{2\sigma^2}}\ &\sim&\ 
\frac{1}{4x \sqrt{6}} e^{\frac{\pi}{12}\sqrt{24x-1} + \pi i x} 
\int_{-\infty}^\infty e^{-t^2/2\sigma^2} dt\\ 
&=&\ \frac{\sigma \sqrt{2\pi}}{4x\sqrt{6}} e^{\frac{\pi}{12}\sqrt{24x-1} + \pi i x}\\
&\sim&\ {e^{\pi ix}\over 2^{3/4}x^{1/4}}\sqrt{p(x)}.
\end{eqnarray*}
\qed


\bigskip

We need the next lemma. 


\begin{lemma}\label{lemmo}
With the same notation as theorem \ref{main1}
\begin{align}\label{pentagonalbessel}
 \sum_{n:an^2+bn+d<x}(-1)^n I_{\alpha}\left(c\sqrt{x-an^2+bn+d}\right)h(n)=O\left(e^{cw\sqrt{x}}\right).
\end{align}
where $I_{\alpha}$ is the Bessel function.
\end{lemma}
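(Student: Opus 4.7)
The plan is to repeat the contour integration in the proof of Theorem \ref{main1}, substituting the Bessel function $I_\alpha(c\sqrt{x-q(z)})$ for the exponential $e^{c\sqrt{x-q(z)}}$. Writing $q(z):=az^2+bz+d$, the power series $I_\alpha(y)=(y/2)^\alpha\sum_{k\ge 0}\frac{(y/2)^{2k}}{k!\,\Gamma(k+\alpha+1)}$ shows that $I_\alpha(c\sqrt{x-q(z)})$ has exactly the same branch structure as $(x-q(z))^{\alpha/2}$, so with the cuts $(-\infty,\alpha_1]\cup[\alpha_2,\infty)$ chosen as before, the function
\[ H(z):=\frac{h(z)\,I_\alpha(c\sqrt{x-q(z)})}{\sin(\pi z)} \]
is meromorphic inside the square contour $\gamma$ from the proof of Theorem \ref{main1}, with simple poles only at integers $n$ satisfying $q(n)<x$. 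The residue theorem then gives
\[ \int_\gamma H(z)\,dz \;=\; 2\pi i\sum_{q(n)<x}(-1)^n h(n)\,I_\alpha\!\left(c\sqrt{x-q(n)}\right). \]

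To bound the LHS, I would apply the large-argument asymptotic
\[ I_\alpha(y)=\frac{e^y}{\sqrt{2\pi y}}\bigl(1+O(|y|^{-1})\bigr)+\frac{e^{-y\pm i\pi(\alpha+1/2)}}{\sqrt{2\pi y}}\bigl(1+O(|y|^{-1})\bigr), \]
valid in a wide sector, to conclude that $|I_\alpha(c\sqrt{x-q(z)})|\ll e^{\mathrm{Re}(c\sqrt{x-q(z)})}\cdot|c\sqrt{x-q(z)}|^{-1/2}$ on $\gamma$. This differs from the pointwise bound used in the proof of Theorem \ref{main1} only by the factor $|c\sqrt{x-q(z)}|^{-1/2}$, which is $O(x^{-1/4})$ on the horizontal legs $\gamma_1,\gamma_3$ and $O(1)$ on the vertical legs $\gamma_2,\gamma_4$ away from the branch points; since $\mathrm{Re}(c\sqrt{x-q(z)})>0$ on $\gamma$, the first term in the asymptotic dominates. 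Reproducing verbatim the calculations from the proof of Theorem \ref{main1}, one obtains $\int_\gamma H(z)\,dz\ll\sqrt{x}\,e^{cw\sqrt{x}}$, and absorbing the polynomial prefactor into the exponential (or into the big-$O$) yields the claimed bound.

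The main obstacle is to handle small neighbourhoods of the two branch points $z=\alpha_1,\alpha_2$, where $c\sqrt{x-q(z)}\to 0$ and the large-argument asymptotic fails. There one instead uses the near-zero behaviour $I_\alpha(y)=\frac{(y/2)^\alpha}{\Gamma(\alpha+1)}(1+O(|y|^2))$, which shows that $I_\alpha(c\sqrt{x-q(z)})$ is bounded (indeed vanishing for $\alpha>0$), so deforming $\gamma$ around each branch point along a small semicircle of radius $\varepsilon$ contributes $O(\varepsilon\cdot x^{t/2})$ and can be made negligible. A subsidiary check is that $\arg(c\sqrt{x-q(z)})$ stays in a fixed sector $(-\pi+\delta,\pi-\delta)$ on the remainder of $\gamma$, which is guaranteed by the placement of the branch cut since $x-q(z)$ never crosses the negative real axis on the contour. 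Combining these pieces establishes the lemma.
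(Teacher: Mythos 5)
Your proposal is correct and follows essentially the same route as the paper: the paper's proof likewise replaces the exponential in the contour integrand of Theorem \ref{main1} by $I_{\alpha}(c\sqrt{x-q(z)})$, applies the residue theorem on the same square contour, and bounds $I_{\alpha}$ on each leg by the exponential of the (real part of the) same argument, so that the estimates from Theorem \ref{main1} carry over verbatim. Your extra care with the large-argument Bessel asymptotics and the branch points is a finer level of detail than the paper's sketch, but it is the same argument.
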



Before we mention the proof note that for fixed $\alpha$ and large enough $x$
$$I_{\alpha}(x) \sim \frac{e^{x}}{\sqrt{2\pi x}} \left(1 + O\left(\frac{1}{x}\right)\right). $$ 


\begin{proof}
Since the proof is very similar to proof of \ref{main1}, we skip the details. Let $H(z)=\frac{h(z)I_{\alpha}(\sqrt{x-z^2})}{\sin(\pi z)}$ and $q(n)=an^2+bn+d$ and assume the contour $\gamma$ in \ref{shekl}. Then
\begin{align*}
\sum_{\ell: q(\ell)<x}(-1)^\ell I_{\alpha}\left(c\sqrt{x-q(\ell)}\right)h(\ell)=\int_{\gamma}H(z)dz
\end{align*}
For $z\in \gamma_1,\gamma_3$ 
\begin{align*}
|H(z)|\ll \frac{I_{\alpha}\left(c\sqrt{x(1+au^2)}\right)}{e^{\pi u\sqrt{x}}}\ll e^{c\sqrt{x(1+au^2)}-\pi u\sqrt{x}}.
\end{align*}
Also for $z\in \gamma_2,\gamma_4$
\begin{align*}
|H(z)|\ll I_{\alpha}\left(c\sqrt{\frac{ux}{2}\left(\sqrt{au^2+4}+u\sqrt{a}\right)}\right)\ll \sqrt{x}e^{c\sqrt{x\sqrt{a\alpha}\frac{\sqrt{a\alpha^2+4}+\alpha\sqrt{a}}{2}}-\pi \alpha\sqrt{x}}.
\end{align*}
with the same notation as in proof of theorem \ref{main1}. As the bound of argument of Bessel function is the same as exponents in the proof of theorem \ref{main1} we get the same bound.
\end{proof}


\bigskip


\textit{Proof of corollaries \ref{penbessel1} and \ref{penbessel2}. } 
For corollary \ref{penbessel1} pick $a=1$, and $c=\sqrt{\frac{2\pi^2}{3}}$ in the Lemma \ref{lemmo}. For corollary \ref{penbessel2} pick $c=\frac{2\pi}{ \sqrt{15}}$  and $a=1$ in Lemma \ref{lemmo}. 
\qed


\bigskip


\section{Proof related to the Prouhet-Tarry-Escott problem}\label{PTE}

It is worth establishing a ``baseline result" related to problem \ref{terryescottapproximation} for $N$ large, relative to
$k, n$, that we get easily from a Pigeonhole Argument: consider all vectors
$(x, x^2,\cdots, x^k)$ with $1 \leq x \leq N$. The sum of $n$ of these lie in a box of volume
$n^kN^{k(k+1)/2}$; and if two such sums belong to the same box with dimensions
$N^c \times N^{2c}\times \cdots\times N^{kc}$, then they give a solution to \eqref{terryescottapproximation} for all $1\leq i\leq k$. The
number of non over-lapping $N^c\times \cdots\times N^{kc}$ boxes that fit inside our volume $n^kN^{k(k+1)/2}$ is at
most $n^kN^{(1-c)k(k+1)/2}$; and with a little work one can see that the large box
can be covered with approximately (up to a constant factor) this many smaller
boxes. If this (the number of smaller boxes in a covering) is smaller than the
number of sets of $n$ vectors $(x, x^2, \cdots, x^k)$ that produce our vector sum  (this
count is at least $ \frac{N^n}{n!}$ for $N$ large enough relative to $n$) then we get a ``collision",
that is a pair of sequences $a_1, \cdots, a_n$ and $b_1, \cdots, b_n$ leading to a solution to \ref{terryescottapproximation}
for all $1\leq i \leq k$. In other words, we get such a solution when
\begin{align*}
n^kN^{\frac{(1-c)k(k+1)}{2}}<\frac{N^n}{n!}.
\end{align*}
For $N$ large, then, we get that there is a solution so long as
\begin{align}
c> 1-\frac{2n}{k(k+1)}.
\end{align}
When $k$ is a little smaller than
$\sqrt{2n}$, note that the RHS is negative,
implying that we can take $c = 0$ (since it must be non-negative).

Curiously, when $k$ is only a little bigger than $\sqrt{n}$ (say, $\sqrt{n}\log(n)$), then this pigeonhole argument only gives us pairs of sequences with $c$ near to $1$.
Basically, then, we don't get a much better result for the weakening than we do for the original Prouhet-Tarry-Escott Problem, if we insist on finding solutions with $c < \frac{1}{2}$, say.

\bigskip

We prove a lemma before introducing a set of solutions for the weak Prouhet-Terry-Escott problem (problem \ref{terryescottapproximation}).


\begin{lemma}\label{lem}
For large $x$, let $k\ll \frac{\sqrt{x}}{\log(xT)}$ and $T:=T(x)=o(x)$. Then for every  $1\leq r\leq k$ there exists $c>0$ such that 
\begin{align}
\sum_{\ell^2<xT}(-1)^\ell \left(xT-\ell^2\right)^{\frac{r}{2}}\ll \sqrt{x}(Tx)^{\frac{r}{4}} (Ar)^{r/2}
\end{align}
\end{lemma}



\begin{remark}
Note that the proof becomes easier if we just choose $r$ to be even. But we propose a more general case here.
\end{remark}



\begin{proof}
Let $u=o(\sqrt{x})$, to be determined later. Define
\begin{align*}
f_r(z)=\frac{\left(xT-z^2\right)^{\frac{r}{2}}}{\sin(\pi z)}.
\end{align*}
Let $\gamma$ be the contour in Figure \ref{shekl3}. Using the residue Theorem 
\begin{align*}
\int_{\gamma}f_r(z)dz= 2\pi i\sum_{\ell^2<xT}(-1)^\ell\left(Tx-\ell^2\right)^{\frac{r}{2}}.
\end{align*}


\begin{figure}[h!] 

\tikzset{every picture/.style={line width=0.75pt}} 

\begin{tikzpicture}[x=0.75pt,y=0.75pt,yscale=-1,xscale=1]

\draw [line width=1.5]  (300.98,2512.7) -- (537.2,2512.7)(324.6,2393) -- (324.6,2526) (530.2,2507.7) -- (537.2,2512.7) -- (530.2,2517.7) (319.6,2400) -- (324.6,2393) -- (329.6,2400)  ;
\draw [line width=1.5]    (124,2510.7) -- (301,2512.7) ;
\draw [line width=1.5]    (325,2522.7) -- (326.3,2600.18) -- (326,2609.7) ;
\draw   (160.1,2446.2) -- (489.1,2446.2) -- (489.1,2578.2) -- (160.1,2578.2) -- cycle ;
\draw [color={rgb, 255:red, 208; green, 2; blue, 27 }  ,draw opacity=1 ][line width=1.5]    (116,2510.7) -- (153,2510.7) ;
\draw  [color={rgb, 255:red, 208; green, 2; blue, 27 }  ,draw opacity=1 ][line width=3] [line join = round][line cap = round] (236,2509.7) .. controls (236,2510.04) and (236,2510.37) .. (236,2510.7) ;
\draw  [color={rgb, 255:red, 208; green, 2; blue, 27 }  ,draw opacity=1 ][line width=3] [line join = round][line cap = round] (243,2509.7) .. controls (243,2509.7) and (243,2509.7) .. (243,2509.7) ;
\draw  [color={rgb, 255:red, 208; green, 2; blue, 27 }  ,draw opacity=1 ][line width=3] [line join = round][line cap = round] (250,2509.7) .. controls (250,2509.7) and (250,2509.7) .. (250,2509.7) ;
\draw  [color={rgb, 255:red, 208; green, 2; blue, 27 }  ,draw opacity=1 ][line width=3] [line join = round][line cap = round] (392,2510.7) .. controls (392,2510.7) and (392,2510.7) .. (392,2510.7) ;
\draw  [color={rgb, 255:red, 208; green, 2; blue, 27 }  ,draw opacity=1 ][line width=3] [line join = round][line cap = round] (400,2510.7) .. controls (400,2511.04) and (400,2511.37) .. (400,2511.7) ;
\draw  [color={rgb, 255:red, 208; green, 2; blue, 27 }  ,draw opacity=1 ][line width=3] [line join = round][line cap = round] (408,2510.7) .. controls (408,2511.04) and (408,2511.37) .. (408,2511.7) ;
\draw    (310,2577.7) -- (444,2578.69) ;
\draw [shift={(446,2578.7)}, rotate = 180.42] [color={rgb, 255:red, 0; green, 0; blue, 0 }  ][line width=0.75]    (10.93,-3.29) .. controls (6.95,-1.4) and (3.31,-0.3) .. (0,0) .. controls (3.31,0.3) and (6.95,1.4) .. (10.93,3.29)   ;
\draw    (523,2541.7) -- (495.54,2518.98) ;
\draw [shift={(494,2517.7)}, rotate = 399.61] [color={rgb, 255:red, 0; green, 0; blue, 0 }  ][line width=0.75]    (10.93,-3.29) .. controls (6.95,-1.4) and (3.31,-0.3) .. (0,0) .. controls (3.31,0.3) and (6.95,1.4) .. (10.93,3.29)   ;
\draw    (133,2475.7) -- (154.76,2503.14) ;
\draw [shift={(156,2504.7)}, rotate = 231.57999999999998] [color={rgb, 255:red, 0; green, 0; blue, 0 }  ][line width=0.75]    (10.93,-3.29) .. controls (6.95,-1.4) and (3.31,-0.3) .. (0,0) .. controls (3.31,0.3) and (6.95,1.4) .. (10.93,3.29)   ;
\draw    (368,2447.2) -- (233,2446.22) ;
\draw [shift={(231,2446.2)}, rotate = 360.41999999999996] [color={rgb, 255:red, 0; green, 0; blue, 0 }  ][line width=0.75]    (10.93,-3.29) .. controls (6.95,-1.4) and (3.31,-0.3) .. (0,0) .. controls (3.31,0.3) and (6.95,1.4) .. (10.93,3.29)   ;
\draw    (171,2386.2) ;
\draw [color={rgb, 255:red, 208; green, 2; blue, 27 }  ,draw opacity=1 ]   (476,2513.2) ;
\draw [shift={(476,2513.2)}, rotate = 0] [color={rgb, 255:red, 208; green, 2; blue, 27 }  ,draw opacity=1 ][fill={rgb, 255:red, 208; green, 2; blue, 27 }  ,fill opacity=1 ][line width=0.75]      (0, 0) circle [x radius= 3.35, y radius= 3.35]   ;
\draw [color={rgb, 255:red, 208; green, 2; blue, 27 }  ,draw opacity=1 ][line width=1.5]    (505,2512.2) -- (532,2512.2) ;
\draw [color={rgb, 255:red, 208; green, 2; blue, 27 }  ,draw opacity=1 ]   (456,2513.2) ;
\draw [shift={(456,2513.2)}, rotate = 0] [color={rgb, 255:red, 208; green, 2; blue, 27 }  ,draw opacity=1 ][fill={rgb, 255:red, 208; green, 2; blue, 27 }  ,fill opacity=1 ][line width=0.75]      (0, 0) circle [x radius= 3.35, y radius= 3.35]   ;
\draw [color={rgb, 255:red, 208; green, 2; blue, 27 }  ,draw opacity=1 ]   (434,2513.2) ;
\draw [shift={(434,2513.2)}, rotate = 0] [color={rgb, 255:red, 208; green, 2; blue, 27 }  ,draw opacity=1 ][fill={rgb, 255:red, 208; green, 2; blue, 27 }  ,fill opacity=1 ][line width=0.75]      (0, 0) circle [x radius= 3.35, y radius= 3.35]   ;
\draw [color={rgb, 255:red, 208; green, 2; blue, 27 }  ,draw opacity=1 ]   (372,2512.2) ;
\draw [shift={(372,2512.2)}, rotate = 0] [color={rgb, 255:red, 208; green, 2; blue, 27 }  ,draw opacity=1 ][fill={rgb, 255:red, 208; green, 2; blue, 27 }  ,fill opacity=1 ][line width=0.75]      (0, 0) circle [x radius= 3.35, y radius= 3.35]   ;
\draw [color={rgb, 255:red, 208; green, 2; blue, 27 }  ,draw opacity=1 ]   (349,2512.2) ;
\draw [shift={(349,2512.2)}, rotate = 0] [color={rgb, 255:red, 208; green, 2; blue, 27 }  ,draw opacity=1 ][fill={rgb, 255:red, 208; green, 2; blue, 27 }  ,fill opacity=1 ][line width=0.75]      (0, 0) circle [x radius= 3.35, y radius= 3.35]   ;
\draw [color={rgb, 255:red, 208; green, 2; blue, 27 }  ,draw opacity=1 ]   (324.6,2512.7) ;
\draw [shift={(324.6,2512.7)}, rotate = 0] [color={rgb, 255:red, 208; green, 2; blue, 27 }  ,draw opacity=1 ][fill={rgb, 255:red, 208; green, 2; blue, 27 }  ,fill opacity=1 ][line width=0.75]      (0, 0) circle [x radius= 3.35, y radius= 3.35]   ;
\draw [color={rgb, 255:red, 208; green, 2; blue, 27 }  ,draw opacity=1 ]   (301,2512.7) ;
\draw [shift={(301,2512.7)}, rotate = 0] [color={rgb, 255:red, 208; green, 2; blue, 27 }  ,draw opacity=1 ][fill={rgb, 255:red, 208; green, 2; blue, 27 }  ,fill opacity=1 ][line width=0.75]      (0, 0) circle [x radius= 3.35, y radius= 3.35]   ;
\draw [color={rgb, 255:red, 208; green, 2; blue, 27 }  ,draw opacity=1 ]   (281,2512.2) ;
\draw [shift={(281,2512.2)}, rotate = 0] [color={rgb, 255:red, 208; green, 2; blue, 27 }  ,draw opacity=1 ][fill={rgb, 255:red, 208; green, 2; blue, 27 }  ,fill opacity=1 ][line width=0.75]      (0, 0) circle [x radius= 3.35, y radius= 3.35]   ;
\draw [color={rgb, 255:red, 208; green, 2; blue, 27 }  ,draw opacity=1 ]   (212.5,2511.7) ;
\draw [shift={(212.5,2511.7)}, rotate = 0] [color={rgb, 255:red, 208; green, 2; blue, 27 }  ,draw opacity=1 ][fill={rgb, 255:red, 208; green, 2; blue, 27 }  ,fill opacity=1 ][line width=0.75]      (0, 0) circle [x radius= 3.35, y radius= 3.35]   ;
\draw [color={rgb, 255:red, 208; green, 2; blue, 27 }  ,draw opacity=1 ]   (193,2511.2) ;
\draw [shift={(193,2511.2)}, rotate = 0] [color={rgb, 255:red, 208; green, 2; blue, 27 }  ,draw opacity=1 ][fill={rgb, 255:red, 208; green, 2; blue, 27 }  ,fill opacity=1 ][line width=0.75]      (0, 0) circle [x radius= 3.35, y radius= 3.35]   ;
\draw [color={rgb, 255:red, 208; green, 2; blue, 27 }  ,draw opacity=1 ]   (172,2511.2) ;
\draw [shift={(172,2511.2)}, rotate = 0] [color={rgb, 255:red, 208; green, 2; blue, 27 }  ,draw opacity=1 ][fill={rgb, 255:red, 208; green, 2; blue, 27 }  ,fill opacity=1 ][line width=0.75]      (0, 0) circle [x radius= 3.35, y radius= 3.35]   ;

\draw (76,2449.7) node [anchor=north west][inner sep=0.75pt]  [xslant=-0.04] [align=left] {\mbox{-}$\displaystyle \sqrt{Tx}$};
\draw (516,2530.7) node [anchor=north west][inner sep=0.75pt]  [xslant=-0.04] [align=left] {$\displaystyle \sqrt{Tx}$};
\draw (326,2574.7) node [anchor=north west][inner sep=0.75pt]   [align=left] {$\displaystyle -u\sqrt{x}$};
\draw (326,2413.7) node [anchor=north west][inner sep=0.75pt]   [align=left] {$\displaystyle u\sqrt{x}$};
\draw (488.75,2445.9) node [anchor=north west][inner sep=0.75pt]   [align=left] {$\displaystyle \gamma _{4}$};
\draw (441,2576.7) node [anchor=north west][inner sep=0.75pt]   [align=left] {$\displaystyle \gamma _{3}$};
\draw (137,2546.7) node [anchor=north west][inner sep=0.75pt]   [align=left] {$\displaystyle \gamma _{2}$};
\draw (207,2416.7) node [anchor=north west][inner sep=0.75pt]   [align=left] {$\displaystyle \gamma _{1}$};

\end{tikzpicture}

\centering \caption{The contour $\gamma$} 
\label{shekl3}
\end{figure}

Let $z\in \gamma_1,\gamma_3$. So $z=t\pm iu\sqrt{x}$ and $-\sqrt{xT}<t<\sqrt{xT}$. Then 
\begin{align*}
\left\vert xT-z^2\right\vert^2
=\ &\left(xT+u^2x-t^2\right)^2+ 4t^2u^2x.
\end{align*}
Note that $u$ is a constant as $x$ tends to infinity and $T=o(x)$. With simple computation we can conclude that the RHS is maximaized at $t=0$, so 
on $\gamma_1,\gamma_3$ we have 
\begin{align}\label{gamaa1}
|f_r(z)|\ll \left\vert xT+u^2x\right\vert^{\frac{r}{4}}e^{-\pi u\sqrt{x}}\sim (xT)^{\frac{r}{4}} e^{-\pi u\sqrt{x}}.
\end{align}
By assumption $r\leq k\ll \sqrt{x}/\log(xT)$, so we can pick $u$ to be large enough so as the contribution of horizental legs become small. For $\gamma_2,\gamma_4$ we have $z=\pm \sqrt{xT}+it$ and $-u\sqrt{x}<t<u\sqrt{x}$. We can show that
\begin{align*}
\left\vert xT-z^2\right\vert^2
=t^4+4t^2xT.
\end{align*}
So we need to maximize the RHS of the following expression for $t\leq u\sqrt{x}$  
$$|f_r(z)|\ll (t^4+4t^2xT)^{\frac{r}{4}}e^{-\pi t}$$
Simple computation shows that it happens when $t\sim Cr$ for some $C>0$. Hence, there exist $A>0$ such that
$$|f_r(z)|\ll A^r (xT)^{\frac{r}{4}} \left(r^2 + \frac{r^4}{xT}\right)^{\frac{r}{4}} \ll (Ar)^{r/2} (xT)^{\frac{r}{4}}.$$
This completes the proof.
\end{proof}


\bigskip

\begin{remark}
We could increase the height of vertical lines of figure \ref{shekl3} to $x^{\alpha}$, $\alpha>\frac{1}{2}$, to make it possible for $k$ to become bigger - say $k\gg x^{\alpha}$. This in turn results in larger $k=M(n)$ and larger error term.   
\end{remark}

\bigskip


\textit{Proof of Theorem \ref{pte0}. } Let $M$ be a large number. 
\begin{align*}
x_i=M^{2m+b}-(2i-2)^2\quad \quad \quad
y_i=M^{2m+b}-(2i-1)^2
\end{align*}
Then  $\max(x_i^r,y_i^r)\sim M^{2m+b}$. Lemma \ref{lem} concludes that for $x=M^{2m}$ and $T=M^{b}$ and $1\leq r\leq k$
\begin{align*}
\sum_{i}x_i^r-\sum_i y_i^r \ll (r)^{\frac{r}{2}+\epsilon}M^{(2m+b)\frac{r}{4}+m}.
\end{align*}
If we pick $k\leq \frac{u\pi M^{m}}{12m^2\log(M)}$ and $b=1$, then the result follows.
\qed
        

\bigskip


\textit{Proof of Theorem \ref{polynomialcase}. } 
We first show that $f_r(M)$ is a polynomial in $M$ -- that is, 
$$f_r(M) = c_0(r) + c_1(r)M + \cdots + c_d(r) M^d,$$ 
where $d$ is yet to be determined.  This follows upon applying the binomial theorem to the terms in the definition of $f_r(M)$, together with the fact that $\sum_{|\ell| < 2M} (-1)^{\ell} \ell^k$ is a polynomial in $M$. The coefficients are obviously integers and we also can show the coefficients as sums involving Bernouli numbers. Note that the degree $d$ of that polynomial doesn't depend on $M$.  

Let's assume that $r$ is even. We now leverage this fact to prove that $d = r-1$.  To do this, note that it suffices to prove that $|f_r(M)| = o_r( M^r)$, and $|f_r(M)| \gg_r M^{r-1}$. 
To put that another way:  fix $r$, and then we show that 
$$ \lim_{M \to \infty} \frac{\log( |f_r(M)|)}{\log(M)} = r-1. $$
 Write $f_r(M)$ as the contour integral 
\begin{align*}
\frac{1}{2\pi i} \int_{\gamma} f(z) dz:=\frac{1}{2\pi i} \int_{\gamma} \frac{(4M^2 - z^2)^r}{\sin(\pi z)} dz,
\end{align*} 
where $\gamma$ is in figure \ref{sheklaku}. Note that because $f$ has a removable singularity at $z=\pm 2M$, it is possible to compute the contribution of the integral in these vertical legs.


\begin{figure}[h!]

\tikzset{every picture/.style={line width=0.75pt}} 

\begin{tikzpicture}[x=0.65pt,y=0.65pt,yscale=-0.8,xscale=0.8]

\draw [line width=1.5]  (300.67,2932.1) -- (509,2932.1)(321.5,2769.7) -- (321.5,2950.15) (502,2927.1) -- (509,2932.1) -- (502,2937.1) (316.5,2776.7) -- (321.5,2769.7) -- (326.5,2776.7)  ;
\draw [line width=1.5]    (144.58,2929.39) -- (300.69,2932.1) ;
\draw [line width=1.5]    (321.85,2945.67) -- (323,3050.78) -- (322.74,3063.7) ;
\draw  [line width=2.25]  (176.42,2841.88) -- (466.58,2841.88) -- (466.58,3020.97) -- (176.42,3020.97) -- cycle ;
\draw [color={rgb, 255:red, 208; green, 2; blue, 27 }  ,draw opacity=1 ][line width=1.5]    (137.53,2929.39) -- (170.16,2929.39) ;
\draw  [color={rgb, 255:red, 208; green, 2; blue, 27 }  ,draw opacity=1 ][line width=3] [line join = round][line cap = round] (243.36,2928.03) .. controls (243.36,2928.48) and (243.36,2928.94) .. (243.36,2929.39) ;
\draw  [color={rgb, 255:red, 208; green, 2; blue, 27 }  ,draw opacity=1 ][line width=3] [line join = round][line cap = round] (249.53,2928.03) .. controls (249.53,2928.03) and (249.53,2928.03) .. (249.53,2928.03) ;
\draw  [color={rgb, 255:red, 208; green, 2; blue, 27 }  ,draw opacity=1 ][line width=3] [line join = round][line cap = round] (255.71,2928.03) .. controls (255.71,2928.03) and (255.71,2928.03) .. (255.71,2928.03) ;
\draw  [color={rgb, 255:red, 208; green, 2; blue, 27 }  ,draw opacity=1 ][line width=3] [line join = round][line cap = round] (380.94,2929.39) .. controls (380.94,2929.39) and (380.94,2929.39) .. (380.94,2929.39) ;
\draw  [color={rgb, 255:red, 208; green, 2; blue, 27 }  ,draw opacity=1 ][line width=3] [line join = round][line cap = round] (388,2929.39) .. controls (388,2929.84) and (388,2930.29) .. (388,2930.75) ;
\draw  [color={rgb, 255:red, 208; green, 2; blue, 27 }  ,draw opacity=1 ][line width=3] [line join = round][line cap = round] (395.05,2929.39) .. controls (395.05,2929.84) and (395.05,2930.29) .. (395.05,2930.75) ;
\draw    (308.62,3019.45) -- (426.57,3020.78) ;
\draw [shift={(428.57,3020.81)}, rotate = 180.65] [color={rgb, 255:red, 0; green, 0; blue, 0 }  ][line width=0.75]    (10.93,-3.29) .. controls (6.95,-1.4) and (3.31,-0.3) .. (0,0) .. controls (3.31,0.3) and (6.95,1.4) .. (10.93,3.29)   ;
\draw    (359.78,2841.56) -- (240.95,2841.87) ;
\draw [shift={(238.95,2841.88)}, rotate = 359.85] [color={rgb, 255:red, 0; green, 0; blue, 0 }  ][line width=0.75]    (10.93,-3.29) .. controls (6.95,-1.4) and (3.31,-0.3) .. (0,0) .. controls (3.31,0.3) and (6.95,1.4) .. (10.93,3.29)   ;
\draw [color={rgb, 255:red, 208; green, 2; blue, 27 }  ,draw opacity=1 ]   (455.03,2932.78) ;
\draw [shift={(455.03,2932.78)}, rotate = 0] [color={rgb, 255:red, 208; green, 2; blue, 27 }  ,draw opacity=1 ][fill={rgb, 255:red, 208; green, 2; blue, 27 }  ,fill opacity=1 ][line width=0.75]      (0, 0) circle [x radius= 3.35, y radius= 3.35]   ;
\draw [color={rgb, 255:red, 208; green, 2; blue, 27 }  ,draw opacity=1 ][line width=1.5]    (480.6,2931.42) -- (504.42,2931.42) ;
\draw [color={rgb, 255:red, 208; green, 2; blue, 27 }  ,draw opacity=1 ]   (437.39,2932.78) ;
\draw [shift={(437.39,2932.78)}, rotate = 0] [color={rgb, 255:red, 208; green, 2; blue, 27 }  ,draw opacity=1 ][fill={rgb, 255:red, 208; green, 2; blue, 27 }  ,fill opacity=1 ][line width=0.75]      (0, 0) circle [x radius= 3.35, y radius= 3.35]   ;
\draw [color={rgb, 255:red, 208; green, 2; blue, 27 }  ,draw opacity=1 ]   (417.99,2932.78) ;
\draw [shift={(417.99,2932.78)}, rotate = 0] [color={rgb, 255:red, 208; green, 2; blue, 27 }  ,draw opacity=1 ][fill={rgb, 255:red, 208; green, 2; blue, 27 }  ,fill opacity=1 ][line width=0.75]      (0, 0) circle [x radius= 3.35, y radius= 3.35]   ;
\draw [color={rgb, 255:red, 208; green, 2; blue, 27 }  ,draw opacity=1 ]   (363.3,2931.42) ;
\draw [shift={(363.3,2931.42)}, rotate = 0] [color={rgb, 255:red, 208; green, 2; blue, 27 }  ,draw opacity=1 ][fill={rgb, 255:red, 208; green, 2; blue, 27 }  ,fill opacity=1 ][line width=0.75]      (0, 0) circle [x radius= 3.35, y radius= 3.35]   ;
\draw [color={rgb, 255:red, 208; green, 2; blue, 27 }  ,draw opacity=1 ]   (343.02,2931.42) ;
\draw [shift={(343.02,2931.42)}, rotate = 0] [color={rgb, 255:red, 208; green, 2; blue, 27 }  ,draw opacity=1 ][fill={rgb, 255:red, 208; green, 2; blue, 27 }  ,fill opacity=1 ][line width=0.75]      (0, 0) circle [x radius= 3.35, y radius= 3.35]   ;
\draw [color={rgb, 255:red, 208; green, 2; blue, 27 }  ,draw opacity=1 ]   (321.5,2932.1) ;
\draw [shift={(321.5,2932.1)}, rotate = 0] [color={rgb, 255:red, 208; green, 2; blue, 27 }  ,draw opacity=1 ][fill={rgb, 255:red, 208; green, 2; blue, 27 }  ,fill opacity=1 ][line width=0.75]      (0, 0) circle [x radius= 3.35, y radius= 3.35]   ;
\draw [color={rgb, 255:red, 208; green, 2; blue, 27 }  ,draw opacity=1 ]   (300.69,2932.1) ;
\draw [shift={(300.69,2932.1)}, rotate = 0] [color={rgb, 255:red, 208; green, 2; blue, 27 }  ,draw opacity=1 ][fill={rgb, 255:red, 208; green, 2; blue, 27 }  ,fill opacity=1 ][line width=0.75]      (0, 0) circle [x radius= 3.35, y radius= 3.35]   ;
\draw [color={rgb, 255:red, 208; green, 2; blue, 27 }  ,draw opacity=1 ]   (283.05,2931.42) ;
\draw [shift={(283.05,2931.42)}, rotate = 0] [color={rgb, 255:red, 208; green, 2; blue, 27 }  ,draw opacity=1 ][fill={rgb, 255:red, 208; green, 2; blue, 27 }  ,fill opacity=1 ][line width=0.75]      (0, 0) circle [x radius= 3.35, y radius= 3.35]   ;
\draw [color={rgb, 255:red, 208; green, 2; blue, 27 }  ,draw opacity=1 ]   (222.63,2930.75) ;
\draw [shift={(222.63,2930.75)}, rotate = 0] [color={rgb, 255:red, 208; green, 2; blue, 27 }  ,draw opacity=1 ][fill={rgb, 255:red, 208; green, 2; blue, 27 }  ,fill opacity=1 ][line width=0.75]      (0, 0) circle [x radius= 3.35, y radius= 3.35]   ;
\draw [color={rgb, 255:red, 208; green, 2; blue, 27 }  ,draw opacity=1 ]   (205.44,2930.07) ;
\draw [shift={(205.44,2930.07)}, rotate = 0] [color={rgb, 255:red, 208; green, 2; blue, 27 }  ,draw opacity=1 ][fill={rgb, 255:red, 208; green, 2; blue, 27 }  ,fill opacity=1 ][line width=0.75]      (0, 0) circle [x radius= 3.35, y radius= 3.35]   ;
\draw [color={rgb, 255:red, 208; green, 2; blue, 27 }  ,draw opacity=1 ]   (186.92,2930.07) ;
\draw [shift={(186.92,2930.07)}, rotate = 0] [color={rgb, 255:red, 208; green, 2; blue, 27 }  ,draw opacity=1 ][fill={rgb, 255:red, 208; green, 2; blue, 27 }  ,fill opacity=1 ][line width=0.75]      (0, 0) circle [x radius= 3.35, y radius= 3.35]   ;

\draw (144.58,2909.2) node [anchor=north west][inner sep=0.75pt]  [xslant=-0.04] [align=left] {\mbox{-}2M};
\draw (470.8,2909.89) node [anchor=north west][inner sep=0.75pt]  [xslant=-0.04] [align=left] {$\displaystyle 2M$};
\draw (332.51,3019.87) node [anchor=north west][inner sep=0.75pt]   [align=left] {\mbox{-}M};
\draw (332.57,2815.51) node [anchor=north west][inner sep=0.75pt]   [align=left] {$\displaystyle M$};
\draw (473.91,2845.75) node [anchor=north west][inner sep=0.75pt]   [align=left] {$\displaystyle \gamma _{4}$};
\draw (422.8,3019.89) node [anchor=north west][inner sep=0.75pt]   [align=left] {$\displaystyle \gamma _{3}$};
\draw (146.7,2982.51) node [anchor=north west][inner sep=0.75pt]   [align=left] {$\displaystyle \gamma _{2}$};
\draw (216.43,2814.54) node [anchor=north west][inner sep=0.75pt]   [align=left] {$\displaystyle \gamma _{1}$};

\end{tikzpicture}

\centering \caption{The contour $\gamma$}
\label{sheklaku} 
\end{figure}
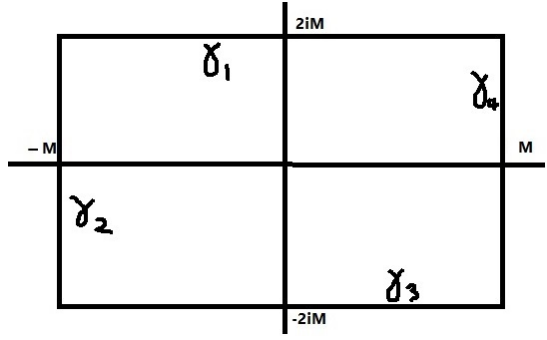


Now, one easily sees that the contribution of $\gamma_1,\gamma_3$ is negligible, and at least for $M$ large relative to $r$ the main contribution will come from the part of the contour near the real axis.  These two parts of the contour can be parametrized as $z = 2M + i t$ and $z = -2M + it$, $|t| \leq 2M$. So, for M large relative to $r$ we will have that the integral is 
\begin{align*} 
\sim \frac{1}{\pi} \int_{-2M}^{2M} \frac{(-4M it + t^2)^r}{\sin(\pi it)} dt &= \frac{1}{\pi}  \int_{-2M}^{2M} \frac{(-4M it)^r}{\sin(\pi it)} dt + \frac{1}{\pi} \int_{-2M}^{2M} \frac{r(-4Mit)^{r-1} t^2 } {\sin(\pi it)} dt + O(M^{r-2}) \\
&\sim 0 + \frac{r}{\pi} (-4Mi)^{r-1} \int_{-\infty}^{\infty}  \frac{t^{r+1}}{\sin(\pi it)} dt \sim c M^{r-1},
\end{align*}
for a constant $c$ that depends only on $r$. Note that the first term of RHS is zero by symmetry. This means that $f_r(M)$ is of degree $r-1$. 
Also we bound the size of $f_r(M)$ from above in the range $r \ll \frac{M}{\log(M)}$.
$$ \int_{\gamma} f(z) dz \sim c M^{r-1} \ll e^{r(\log(r)+\log\log(r))}M^{r-1}.$$
\qed


\end{document}